\author{Guy Bresler\thanks{Guy Bresler is with the Department of Electrical Engineering and Computer Science at MIT and a member of LIDS and IDSS. {\tt\small guy@mit.edu} }  \ 
	and 
	Mina Karzand\thanks{Mina Karzand is with the Department of Electrical Engineering and Computer Science at MIT and a member of LIDS and IDSS. {\tt\small mkarzand@mit.edu}}\thanks{The authors are ordered alphabetically.}
}
\date{}
\newcommand{\rvw}[1]{#1}
\newcommand{\VV}{\mathcal{V}}
\renewcommand{\SS}{\mathcal{S}}
\newcommand{\TT}{\mathcal{T}}
\newcommand{\EE}{\mathcal{E}}
\newcommand{\EEstrong}{\mathcal{E}^{\text{strong}}}
\newcommand{\EdgErr}{\mathtt{E}^{\text{edge}}}
\newcommand{\Ex}{\mathbb{E}}
\newcommand{\al}{\alpha}
\DeclareMathOperator*{\argmin}{arg\,min}
\newcommand{\CC}{\mathcal{C}}  
\newcommand{\Tc}{\mathsf{T^{CL}}}  
\newcommand{\PP}{\mathcal{P}}
\newcommand{\Pt}{{\widetilde{P}}}
\newcommand{\Ph}{\widehat{P}}
\newcommand{\E}{\mathbb{E}}
\newcommand{\LL}{\mathcal{L}}
\newcommand{\T}{{\mathsf{T}}}
\newcommand{\Th}{{\widehat{\mathsf{T}}}} 
\newcommand{\Tt}{{\widetilde{\mathsf{T}}}} 
\renewcommand{\P}{\mathsf{P}}
\renewcommand{\path}{\mathsf{path}}
\newcommand{\subT}[3]{\mathsf{SubTree}_{#1,#2}(#3)}
\newcommand{\XX}{\mathcal{X}}
\newcommand{\wmu}{{\widehat{\mu}}}
\newcommand{\wtmu}{{\widetilde{\mu}}}
\newcommand{\wt}{{\widetilde{w}}}
\newcommand{\ut}{{\widetilde{u}}}
\newcommand{\vt}{{\widetilde{v}}}
\newcommand{\wttheta}{{\widetilde{\theta}}}
\newcommand{\A}{\mathcal{A}}
\newcommand{\At}{\widetilde{\mathcal{A}}}
\newcommand{\B}{\mathcal{B}}
\newcommand{\Bt}{\widetilde{\mathcal{B}}}
\newcommand{\C}{\mathcal{C}}
\newcommand{\Ct}{\widetilde{\mathcal{C}}}
\newcommand{\D}{\mathcal{D}}
\newcommand{\Dt}{\widetilde{\mathcal{D}}}
\newcommand{\F}{\mathcal{F}}
\newcommand{\err}{\mathsf{error}}
\newcommand{\eps}{\epsilon}
\newcommand{\Ecorr}{\mathtt{E}^\mathrm{corr}(\eps)}
\newcommand{\Pcorr}{\mathtt{E}^\mathrm{strong}(\eps)}
\newcommand{\Ncorr}{\mathtt{E}^\mathrm{cascade}(\eps)}
\newcommand{\Ncorrg}{\mathtt{E}^\mathrm{cascade}(\gamma)}
\newcommand{\Ecorrn}{\mathtt{E}^\mathrm{corr}(\eps_0)}
\newcommand{\Pcorrn}{\mathtt{E}^\mathrm{strong}(\eps_0)}
\newcommand{\Ncorrgn}{\mathtt{E}^\mathrm{cascade}(\gamma_0)}
\newcommand{\pr}{m}
\newcommand{\Mc}{\mathcal{M}}
\DeclareMathOperator*{\argmax}{arg\,max}
\newtheorem{defn}{Definition}
\newcommand{\sstv}{\mathsf{ssTV}}
\newtheorem{prop}{Proposition}
\newtheorem{lem}{Lemma}
\newtheorem{theorem}{Theorem}[section]
\newtheorem{lemma}[theorem]{Lemma}
\newtheorem{proposition}[theorem]{Proposition}
\newtheorem{corollary}[theorem]{Corollary}
\theoremstyle{definition}
\newtheorem{definition}[theorem]{Definition}
\title{Learning a Tree-Structured Ising Model \\ in Order to Make Predictions}
\begin{document}
\maketitle




%
%
%






\begin{abstract}
\rvw{We study the problem of learning a tree Ising model from samples such that subsequent predictions made using the model are accurate. The prediction task
considered in this paper is that of predicting the values of a subset of variables given
values of some other subset of variables.} Virtually all previous work on graphical model learning has focused on recovering the true underlying graph. We define a distance (``small set TV" or ssTV) between distributions $P$ and $Q$ by taking the maximum, over all subsets $\SS$ of a given size, of the total variation between the marginals of $P$ and $Q$ on $\SS$; this distance captures the accuracy of the prediction task of interest. 
We derive non-asymptotic bounds on the number of samples needed to get a distribution (from the same class) with small ssTV relative to the one generating the samples. 
One of the main messages of this paper is that far fewer samples are needed than for recovering the underlying tree, which means that accurate predictions are possible using the wrong tree.
\end{abstract}

%
%

\section{Introduction}
Markov random fields, or undirected graphical models, are a useful way to represent high-dimensional probability distributions \cite{wainwright2008graphical,koller2009probabilistic}. A Markov random field is a probability distribution described by a graph: each node in the graph corresponds to a random variable, and the variables are required to satisfy the Markov property whereby a variable is conditionally independent of all other variables given its neighbors.

The practical utility of Markov random fields is in large part due to 1) edges between variables capture direct interactions, which make the model \emph{interpretable} and 2)
the graph structure facilitates efficient approximate \emph{inference} from partial observations, for example using loopy belief propagation or variational methods. A prediction for $X_i$ based on observed values $X_\SS=x_\SS$ for a subset of variables $\SS$  can be easily obtained from the conditional probability $P(X_i=x_i|X_\SS = x_\SS)$. The inference task relevant to this paper is therefore evaluation of conditional probabilities or marginals. 
 
In applications it is often necessary to learn the model from data, and it makes sense to measure accuracy of the learned model in a manner corresponding to the intended use. While in some applications it is of interest to learn the graph itself, in many machine learning problems the focus is on making predictions. In the literature, learning the graph is called \emph{structure learning}; this problem has been studied extensively in recent years, see e.g. \cite{meinshausen2006high,BMS08,vuffray2016interaction,abbeel2006learning,ravikumar2010high,meila2001learning}. In contrast, we consider in this paper the problem of learning a good model \emph{for the purpose of performing subsequent prediction from partial observations}. \rvw{For instance, one might wish to use the learned model to predict the preference of a user for an item in a recommendation system based on ratings obtained for a few items.} This objective has been called ``inferning" (inference + learning)~\cite{heinemann2014inferning}, and has received significantly less attention. This paper contains, to the best of our knowledge, the first results on estimating graphical models with a prediction-centric loss that are applicable to the high-dimensional setting. 

Structure learning becomes statistically more challenging, meaning more data is required, when interactions between variables are very weak or very strong \cite{santhanam2012information,
bento2009graphical,tandon2014information}. It is intuitively clear that very weak edges are difficult to detect, leading to non-identifiability of the model. 
The goal of this paper is to show that learning a model that makes accurate predictions is possible even when structure learning is not. 

With the goal of making predictions in mind, we introduce a loss function to evaluate learning algorithms based on the accuracy of low-order marginals. The small-set total variation between true distribution $P$ and learned distribution $Q$ is defined to be $$ \LL^{(k)}(P,Q)\triangleq \max_{\SS:|\SS|= k} d_\mathrm{TV}(P_\SS , Q_\SS)\,,$$ 
where $P_\SS$ denotes the marginal on set $\SS$.  The small-set total variation is inherently far less stringent than the total variation over the entire joint distribution and this makes a crucial difference in high-dimensional models. This same local total variation metric was used by Rebeschini and van Handel in a somewhat different context~\cite{rebeschini2015can} and has appeared earlier in Dobrushin's work on Gibbs measures~\cite{georgii2011gibbs}. As discussed in Section~\ref{s:Inferning}, small loss $\LL^{(k)}$ guarantees accurate posterior distributions conditioned on sets of size $k-1$. 

Tree-structured graphical models have been particularly well-studied. Aside from their theoretical appeal, there are two reasons for the practical utility of tree models: 1) The maximum likelihood tree can be easily computed, and the correct graph can be recovered with smaller sample and time complexity as compared to loopy graphs, and 2) Efficient exact inference (computation of marginals or maximum probability assignments) is possible using belief propagation. Sum-product or max-product algorithms are two well-studied examples \cite{lauritzen1996graphical,
pearl1988reasoning,wainwright2008graphical,
WaiWai03} of inference algorithms on trees.
Hence, we focus on tree-structured models.

In this paper, we (further) restrict attention to tree-structured Ising models with no external field, defined as follows. For tree $\T=(\VV,\EE)$ on $p$ nodes and edge parameters $\theta_{ij}$  for each edge $(i,j)\in\EE$, each configuration $x\in \{-1,+1\}^p$ is assigned probability
\begin{equation} \label{e:Ising}
P(x) =  \exp\bigg(\sum_{(i,j)\in \EE}\theta_{ij} x_ix_j - \Phi(\theta)\bigg)\,,
\end{equation}
where $\Phi(\theta)$ is the normalizing constant. We assume throughout that $\alpha\leq |\theta_{ij}|\leq \beta$ for some $\alpha, \beta\geq 0$ for each edge $(i,j)\in\EE$.  \rvw{Due to the tree structure, it turns out that the variables $Y_{ij}=X_i X_j$ for $(i,j)\in\EE$ are jointly independent (as shown in Lemma~\ref{l:IndependentError}), a fact that is useful in the analysis. As a consequence, the correlation between a pair of variables is equal to the product of the correlations $\Ex[X_i X_j]$ on the edges $(i,j)$ in the path connecting them. 

In general, we could have  an external field term $\sum_{i\in\VV}\theta_i x_i$ in the exponent of~\eqref{e:Ising}. The assumption of no external field ($\theta_i=0$) implies uniform singleton  marginals, \textit{i.e.,} $P(x_i=+1)=1/2$ for all $i$. This assumption helps to make the analysis tractable and at the same time captures the central features of the problem.}

Suppose we observe i.i.d. samples, generated from a tree-structured Ising model.  The main question we address is how many samples are required in order to learn a model with a guarantee on the accuracy of subsequent predictions computed using the learned model. Since  computation of marginals for a given tree model is easy, the crux of the task is in learning a model with marginals that are close to those of the original model. One of the take-home messages is that learning for the purpose of making predictions requires dramatically fewer samples than is necessary for correctly recovering the underlying tree. The central technical challenge is that our analysis must therefore also apply when it is impossible to learn the true tree, and this requires careful control of the sorts of errors that can occur.

Our main result gives lower and upper bounds on the number of samples needed to learn a tree Ising model to ensure small $\LL^{(2)}$ loss, which in this setting is equivalent to accurate pairwise marginals. We emphasize that the task is to learn a model from the same class (tree-structured Ising) with these guarantees; this is sometimes called \emph{proper} learning.  The main result concerns the maximum likelihood tree (also called Chow-Liu tree), defined in Section \ref{s:learningforPred}.

\begin{theorem} \label{t:mainResultIntro}
Fix $\eta>0$. 	
	Given \rvw{$n>C \max\{\eta^{-2},\,e^{2\beta}\} \log\frac{p}{\delta}$} samples generated according to a tree Ising model $P$ defined in~\eqref{e:Ising} with $|\theta_{ij}|\leq \beta$, denote the Chow-Liu tree by $\Tc$. The Ising model $Q$ on $\Tc$ obtained by matching correlations on the edges satisfies $\LL^{(2)}(P,Q)\leq \eta$ with probability at least $1-\delta$.  Conversely, if $\tanh\alpha+2\eta \leq \min\{\tanh\beta,1/2\}$ and \rvw{$n\leq C' \eta^{-2} \log p$}, then no algorithm can find a tree model $Q$ such that $\LL^{(2)}(P,Q)\leq \eta$ with probability greater than half. 
\end{theorem}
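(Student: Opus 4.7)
The plan has two parts: the upper bound (analysis of Chow–Liu with matched correlations) and the information-theoretic lower bound.

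For the upper bound, the approach is to reduce everything to questions about pairwise correlations and then do a combinatorial analysis of paths in the Chow–Liu tree $\Tc$. Since the Ising model has no external field, the pairwise marginal $P_{ij}$ is determined by $\mu_{ij}=\mathbb{E}_P[X_iX_j]$, and $d_{\mathrm{TV}}(P_{ij},Q_{ij})=\tfrac12|\mu^P_{ij}-\mu^Q_{ij}|$. On the tree $\T$ generating the data, $\mu^P_{ij}=\prod_{e\in\mathrm{path}_\T(i,j)}\mu_e$, while on the matched-correlation model on $\Tc$, $\mu^Q_{ij}=\prod_{e\in\mathrm{path}_{\Tc}(i,j)}\widehat{\mu}_e$. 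So the target $\LL^{(2)}(P,Q)\leq\eta$ reduces to showing that for every pair $(i,j)$ the two path products differ by at most $2\eta$. The first ingredient is concentration: by Hoeffding and a union bound over $\binom{p}{2}$ pairs, $n\gtrsim \eta^{-2}\log(p/\delta)$ samples give $\max_{u,v}|\widehat{\mu}_{uv}-\mu_{uv}|\lesssim\eta$. The second ingredient handles saturated edges: when $|\mu_{uv}|$ is close to $1$, the ``disagreement'' event $\{X_u\neq X_v\}$ occurs with probability $\Theta(e^{-2\beta})$ on a direct edge, and a Bernstein-type bound requires the $e^{2\beta}\log(p/\delta)$ budget to estimate such correlations with the relative accuracy needed in the product.

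The main combinatorial step — the hardest part of the proof — is controlling what happens when $\Tc\neq\T$. The key tool is the cut/cycle property of the maximum-weight spanning tree: for every edge $e^\star\in\T\setminus\Tc$, adding $e^\star$ to $\Tc$ creates a unique cycle, and every edge in that cycle has empirical weight at least $|\widehat{\mu}_{e^\star}|$. I would use this to set up an edge-to-path correspondence $e^\star\mapsto\pi(e^\star)\subset\Tc$ and then compare the products $\prod_e\mu_e$ and $\prod_e\widehat{\mu}_e$ telescopically. Split edges into strong ($|\mu_e|\geq 1-c\eta$) and weak ($|\mu_e|<1-c\eta$): the $e^{2\beta}$ sample count ensures that no weak edge can be empirically stronger than a strong one, so strong edges of $\T$ survive in $\Tc$ and their images $\pi(e^\star)$ are themselves strong single edges (with correct sign); for weak substitutions, both sides of the product are uniformly small, and the additive $\eta$-concentration is enough to close the loop. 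The technical care is in managing cascading multiplicative errors along potentially long replacement paths — in particular exploiting that if the path $\mathrm{path}_{\Tc}(i,j)$ is long, then either many edges are weak (so $\mu^Q_{ij}$ is tiny) or the correspondence with $\mathrm{path}_\T(i,j)$ forces a matching contraction on the $P$ side via the tree product identity.

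For the lower bound I would use a standard multiple-hypothesis-testing argument. Construct a family $\{P_a\}_{a\in[m]}$ with $m=\Theta(p)$ of tree Ising models that agree on a fixed backbone (say a path or star with strong edges) except for the placement of one distinguished leaf $v^\star$, which is attached to vertex $a$ via a weak edge of parameter $\theta=\Theta(\eta)$, so that neighboring hypotheses differ in some pairwise marginal by $\Omega(\eta)$. Any algorithm achieving $\LL^{(2)}(P,Q)\leq\eta/3$ must therefore identify $a$ with probability $>1/2$. A direct KL computation shows $\mathrm{KL}(P_a^{\otimes n}\,\|\,P_{a'}^{\otimes n})=O(n\eta^2)$ for distinct $a,a'$, and Fano's inequality over the $m$ hypotheses forces $n\gtrsim\eta^{-2}\log p$, giving the matching lower bound. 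The main subtlety is choosing the backbone so that the KL perturbation is indeed $O(\eta^2)$ (i.e.\ the backbone is not itself sensitive to where $v^\star$ is attached), which is why attaching $v^\star$ via an isolated weak edge to one of many symmetric candidate parents is natural.
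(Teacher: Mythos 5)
Your lower bound is essentially the paper's argument (the paper uses a chain with all edge weights $\mathrm{atanh}(\eta)$ and $p$ hypotheses obtained by deleting one edge each, rather than moving a leaf's attachment point, but the packing-plus-Fano structure and the $O(n\eta^2)$ KL computation are the same), and your reduction of $\LL^{(2)}$ to path products of correlations is exactly Equation~\eqref{e:LL2}. The upper bound, however, has three genuine gaps. First, your strong/weak dichotomy at $|\mu_e|\geq 1-c\eta$ is the wrong threshold and the claims built on it fail: an edge with $|\mu_e|=1-c\eta-10^{-6}$ can certainly be empirically stronger than one with $|\mu_e|=1-c\eta$, so ``strong edges survive'' is false at the boundary; and ``for weak substitutions both sides of the product are uniformly small'' is false since a weak edge in your sense can have $|\mu_e|=0.9$. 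What the paper actually proves (Lemma~\ref{l:MissingEdgeCharac}, via Bernstein applied to $X_wX_{\wt}\pm X_uX_{\ut}$, whose variance is proportional to $1-\mu_{\A}$) is that \emph{any} edge of $\T$ missed by Chow--Liu has $|\mu_f|\leq 4\eps/\sqrt{1-\tanh\beta}\approx \eps e^{\beta}$, i.e.\ is nearly uncorrelated; this near-vanishing of falsely substituted edges is what multiplicatively damps the accumulated error, and it is also where the $e^{2\beta}$ term really enters (not in estimating saturated correlations, as you suggest).

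Second, the cascade problem is not closed by your ``either many edges are weak or there is a matching contraction'' dichotomy: a path of $p$ edges each with $\mu_e=1-p^{-2}$ has product near $1$, and telescoping per-edge Hoeffding errors gives $p\eps\gg\eta$. The paper's resolution is structural: for a zero-field tree Ising model the variables $\{X_iX_j\}_{(i,j)\in\EE}$ are mutually independent (Lemma~\ref{l:IndependentError}), and a dedicated concentration inequality for products of independent empirical means (Lemma~\ref{l:prod-iid-rv}) bounds $|\prod_e\wmu_e-\prod_e\mu_e|$ by $\gamma$ with probability $1-(8/\gamma)e^{-\gamma^2 n/32}$ \emph{independently of the path length}; some such dimension-free ingredient is indispensable and is absent from your plan. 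Third, the MST cut/cycle property alone only yields $|\wmu_g|\geq|\wmu_f|$ for $g$ on the $\Tc$-cycle closed by $f$; the paper's two-trees lemma (Lemma~\ref{l:edgePair}) produces a \emph{reciprocal} pair, with $f$ also lying on the $\T$-path between the endpoints of $g$, so that $\mu_g=\mu_f\,\mu_{\A}\mu_{\C}\mu_{\At}\mu_{\Ct}$ and hence the detour factor is forced to be within $O(\eps/|\mu_f|)$ of $\pm 1$; this identity is what drives the induction on $\Tc$-distance in Proposition~\ref{p:projectedDist}, and your one-directional edge-to-path correspondence does not supply it.
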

 
The result shows that the Chow-Liu tree, which can be found in time $O(p^2\log p)$, gives small $\LL^{(2)}$ error.
\rvw{We remark that the Chow-Liu algorithm uses only pairwise marginals of the
empirical distribution and can therefore be implemented with missing data as long as
the pairwise marginals can be estimated.}

\rvw{In Section~\ref{s:resultStatement}, we discuss the assumption  $\tanh\alpha+2\eta \leq \min\{\tanh\beta,1/2\}$ made in the theorem, which captures the fact that the learner does not know the magnitude of the edge parameters a priori.}

\rvw{ It turns out that for trees, accuracy of pairwise marginals translates to accuracy of higher order marginals, and a bound to this effect is proved in Appendix~\ref{sec:MarginalK}. We believe that the dependence on $k$ can be improved. 
\begin{corollary}\label{cor:margK}
Let $\T$ and $\T'$ be two (possibly distinct) trees. Let $P$ and $Q$ be probability distributions represented according to $\T$ and $\T'$ using~\eqref{e:Ising} such that $\LL^{(2)}(P,Q)<\eta$. Then for all $k$,  we have $\LL^{(k)}(P,Q)<k2^k \eta$.
\end{corollary} 

Numerical simulations in Section \ref{s:NumSim} show the dependence of the loss  on number of samples for different values of $\alpha$ and $\beta$, supporting Theorem~\ref{t:mainResultIntro}. 
There are a few important issues that are not addressed by Theorem~\ref{t:mainResultIntro}, which we also investigate via simulations in Appendix~\ref{sec:NumSimSUpp}.
These include robustness of the results to 
model misspecification, \textit{i.e.}, samples are not from a tree-structured Ising model; external field in the Ising model~\eqref{e:Ising} generating the data;  accurate marginals of size $k\geq 3$.
}

 We next place the result in the context of related work.

\subsection{Related work}
\label{sec:RelatedWork}

\rvw{Tree-structured graphical models have applications in image processing and computer vision \cite{romberg2001bayesian,freeman2000learning,wainwright2001random,portilla2003image}, artificial intelligence \cite{pearl1988reasoning}, coding theory \cite{gallager1962low} and statistical physics \cite{baxter1985exactly}.}

%
Structure learning in general graphical models has been studied extensively. Information-theoretic bounds on the number of samples have been derived \cite{santhanam2012information,
tandon2014information,bento2009graphical,
bresler2015efficiently,wu2013learning,loh2013structure}.
Structure learning of trees has been studied by \cite{chow1968approximating,tan2011large}.
Learning of generalizations of tree-structured models has been studied, including: forest approximations \cite{tan2011learning,liu2011forest},  polytrees \cite{dasgupta1999learning}, bounded treewidth graphs \cite{srebro2001maximum,PACBilmes}, loopy graphs with correlation decay \cite{anandkumar2013learning,anandkumar2012high}, and mixtures of trees \cite{anandkumar2012learningB,meila2001learning}.

Loopy belief propagation yields accurate marginals in high girth (locally tree-like) graphs with correlation decay. This fact was used by  Heinemann and Globerson \cite{heinemann2014inferning} to justify an algorithm that recovers all the edges of a model from this family, given sufficiently many samples. The output of their algorithm can have extra edges, which are proved to be weak. 
Given number of samples at least linear in $p$,
they show that the learned distribution is at most constant Kullback-Leibler divergence from the true one.

Narasimhan and Bilmes \cite{PACBilmes} found an algorithm with polynomial runtime that uses a polynomial number of samples  to learn bounded tree-width graphical models with respect to KL-divergence. They use ideas from submodular optimization and the specific factorization of the distribution over bounded tree-width graphs.

Structure learning of latent tree models has been well-studied in the phylogenetic reconstruction literature. Erd{\"o}s et al. \cite{erdos1999few} studied sample and time complexity of tree metric based algorithms to reconstruct phylogenetic trees. \rvw{Daskalakis et al.} \cite{DaskalakisContractPrune} and Mossel \cite{Mosseldistortedmetric} use distorted tree metrics to get approximations of phylogenetic trees when exact reconstruction of the tree is impossible. In \cite{Mosseldistortedmetric} a forest approximation of the latent tree is recovered. The maximum number of connected components in this forest is a function of the edge strengths, maximum distance between the leaves and the number of leaves.  Daskalakis et al.\cite{DaskalakisContractPrune} removed the prior assumptions on the phylogenetic tree and instead a forest structure is recovered that contains all edges that are sufficiently strong and sufficiently close to the leaves.

A tree metric over $p$ nodes is associated with a weighted spanning tree such that the distance between every pair of nodes is the sum of weights of the edges along the path between the nodes in the tree. 
 Agarwala et al. \cite{agarwala1998approximability}, approximate a pairwise distance matrix $D$ over $p$ nodes by a tree metric with induced distance matrix $T$. Let $\eps=\min_{T}\{\|T-D\|_{\infty}\}\,$ where $T$ is a tree metric. They propose an $O(p^2)$ algorithm which produces $\widehat{T}$ with $\|\widehat{T}-D\|_{\infty}\leq 3\eps$. They prove that finding a $T$ with $\|T-D\|_{\infty}\leq 9/8\eps$ is NP-hard. Ambainis et al. in \cite{ambainis1997nearly} studied the \textit{leaf variational distance} between the original distribution on a latent tree under the Cavender-Faris (CF) model and the learned latent tree. Let tree $\T$ with $p$ leaves $\VV$ be a CF-tree with the property that all its edges are of length at least $1/\sqrt{n}$ (this is translated as $\log \tanh \alpha <1/\sqrt{n}$ in our model). Then, given $n$ observations, their proposed algorithm produces a distribution $Q$ on a tree $\T'$ with leaves $\VV$ such that $\LL^{(2)}(P_{\VV},Q_{\VV})= O(\sqrt{{p e^{2\beta}}/{n}})$ ($P_{\VV}$ and $Q_{\VV}$ are marginals of $P$ and $Q$ on leaves $\VV$). We will further discuss these results and compare them with our setup in detail in Appendix \ref{sec:comparisonLiterature}.

Wainwright \cite{wainwright2006wrong} was motivated by the same general problem of learning a graphical model to be subsequently used for making predictions, but his focus was on computational rather than statistical limits.
For loopy graphs, both estimation of parameters and prediction based on partial observations are computationally challenging tasks.
Hence, for both, approximate heuristic methods are often used. For given model parameters, one such heuristic for prediction is re-weighted sum-product (a convex relaxation).
Intriguingly, when using such approximate prediction algorithms, an \textit{inconsistent} procedure for estimation of parameters can give better predictions. The results elucidate asymptotic performance, but the analysis does not apply to the high-dimensional setting of interest, with dimension $p$ larger than number of samples $n$.


\subsection{Outline of paper} 

The next section contains background on the Ising model, tree models, and graphical model learning. Section~\ref{s:ExactTree} introduces the problem of learning tree-structured Ising models and records the sample complexity of exact recovery. Then, in Section~\ref{s:Inferning} we define the small-set TV loss function motivated by prediction computations and state our main result in Section~\ref{s:resultStatement}. Section~\ref{s:MarkovChain} analyzes an illustrative example that gives intuition for the main result. Section~\ref{s:truncation} introduces a natural forest approximation algorithm and analyzes its performance in terms of ssTV. Section~\ref{s:ProofSketch} sketches the proof of the main result and Section~\ref{s:ProofProp} fills in the details. Sections~\ref{s:ExactRecovery} and~\ref{sec:lemmaProofs} contain further proofs. Numerical simulations addressing the theorems in the paper are in Section~\ref{s:NumSim}. \rvw{Appendices contain additional proofs, numerical simulations, and discussions on related work. }

\section{Preliminaries}

\subsection{Notation}
For a given tree $\T=(\VV,\EE)$ and positive numbers $\al$ and $\beta$, let $\PP_{\T}(\al,\beta)$ be the set of Ising models~\eqref{e:Ising} with the restriction $\al\leq |\theta_{ij}|\leq \beta$ for each edge $(i,j)\in \EE$ and $\theta_{ij}=0$ for $(i,j)\notin \EE$. Denote by  $\PP_{\T} = \PP_{\T}(0,\infty)$ the set of Ising models on $\T$ with no restrictions on parameter strength.

Denote by $\mu_{ij}=\E_{P}X_i X_j$ the correlation between the variables corresponding to  $i,j\in \mathcal{V}$. For an edge $e=(i,j)$ we write $\mu_{e}=\mu_{ij}$ and for a set of edges $\A\subseteq \EE$, $\mu_\A=\prod_{e\in\A} \mu_e$. 
Given $n$ i.i.d. samples $X^{(1:n)}=X^{(1)},\dots, X^{(n)}$, the empirical distribution is denoted by $
\Ph(x)  = \frac{1}{n}\sum_{l=1}^n\mathbf{1}_{\{X^{(l)} = x\}} $
and $\wmu_{ij}=\E_{\Ph}X_i X_j$ is the empirical correlation between nodes $i$ and $j$.

\subsection{Tree models}

A probability measure $P$ on $\XX^{\VV}$ is Markov with respect to a graph $G=(\VV,\EE)$ if for all $i \in \VV$, we have $P(x_i|x_{\VV\setminus\{i\}}) = P(x_i|x_{\partial i})$, where $\partial i$ is the neighborhood of $i$ in $G$. 
In this paper, we are interested in distributions $P$ that are Markov with respect to a tree $\T=(\VV,\EE)$, and a consequence (see \cite{lauritzen1996graphical}) is that $P(x)$ factorizes as
\begin{equation}\label{eq:TreeFac}
	P(x)=\prod_{i\in \VV}P(x_i) \prod_{(i,j)\in\EE}\frac{P(x_i,x_j)}{P(x_i)P(x_j)}.
\end{equation}

\subsection{Information projection}\label{sec:infprojdef}
Denote by $D(Q\|P)$ the Kullback-Leiber divergence between probability measures $Q$ and $P$ defined as $D(Q\|P) = \sum_{x\in\XX} Q(x)\log\frac{Q(x)}{P(x)}$.
For an arbitrary distribution $P$ and tree $\T$, the distribution 
$$\widetilde{P}(x)=\argmin_{Q \text{ is factorized as~\eqref{eq:TreeFac}} \atop \text{according to} \T} D(P\|Q)$$ 
is the best approximation to $P$ within the set of distributions Markov with respect to the tree $\T$. 
It was observed by Chow and Liu in \cite{chow1968approximating} that $\widetilde{P}$ is obtained by matching the first and second order marginals to those of $P$, i.e., for all $(i,j)\in\EE$, and all $x_i, x_j\in\mathcal{X}$, $\widetilde{P}(x_i,x_j)=P(x_i,x_j)$. 

Let \begin{equation}\label{eq:projoperator}
\Pi_{\T}(P)=\argmin\limits_{Q\in \mathcal{P}_{\T}}D(P\|Q )
\end{equation}
 be the reverse information projection of $P$ onto the class of Ising models on $\T$ with no external field. 
It follows from the definition of $\mathcal{P}_{\T}$ that $\widetilde{P}=\Pi_{\T}(P)$ can be represented as Equation~\eqref{e:Ising} for some $\widetilde{\theta}$ supported on $\T$.
It is shown in Appendix~\ref{sec:InfProj} in the supplementary material that 
$\widetilde{P}=\Pi_{\T}(P)$ has edge weights $\wttheta_{ij}$ for each $(i,j)\in \EE_{\T}$ satisfying $\tanh\widetilde{\theta}_{ij}=\mu_{ij}\triangleq \E_P X_i X_j$ (and $\widetilde{\theta}_{ij}=0$ if $(i,j)\notin\EE_{\T}$).

\subsection{Tree structure learning}
Denote the set of all trees on $p$ nodes by $\TT$.
For some tree $\T$ and distribution $P\in \PP_{\T}$, one observes $n$ independent samples (configurations) $X^{(1)},\dots, X^{(n)}\in \{-,+\}^p$ from the Ising model~\eqref{e:Ising}. 
In this context, a \emph{structure learning algorithm} is a (possibly randomized) map $\phi:\{-1,+1\}^{p\times n}\to \TT$
taking $n$ samples $X^{(1:n)}=X^{(1)},\dots, X^{(n)}$ to a tree $\phi(X^{(1:n)})$.  

The maximum likelihood tree or Chow-Liu tree plays a central role in tree structure learning.  
Chow and Liu \cite{chow1968approximating} observed that the maximum likelihood tree is the max-weight spanning tree in the complete graph, where each edge has weight equal to the empirical mutual information between the variables at its endpoints. The tree can thus be found greedily via Kruskal's algorithm  \cite{chow1968approximating,cormen2009introduction}, and the run-time is dominated by computing empirical mutual information between all pairs of nodes.

In order to support the following definition, we analyzed zero-field Ising models on trees in Lemma~\ref{l:TcMWST} in Appendix~\ref{sec:InfProj}. 
This analysis is similar to \cite{chow1968approximating}. 
 
\begin{definition}[Chow-Liu Tree]
	\label{def:Chow-Liu}
	Given $n$ i.i.d samples $X^{(1:n)}$ from distribution $P\in \PP_{\T}$, we define the Chow-Liu tree to be the maximum likelihood tree: 
	$$\Tc=\mathop{\mathrm{argmax}}_{\T\in\TT}\mathop{\mathrm{max}}_{P\in \PP_{\T}}P(X^{(1:n)})\,.$$
\end{definition}
This definition is slightly abusing the conventional terminology, as the Chow-Liu tree is classically the maximum likelihood tree assuming that the generative distribution is tree-structured \cite{chow1968approximating}, whereas in our definition we assume that the original distribution $P\in\PP_{\T}$ can be described by~\eqref{e:Ising}. Thus, it is not only tree-structured,	 but also has uniform singleton marginals. 

Note that maximizing the likelihood of i.i.d. samples corresponds to minimizing the KL divergence.
 Given the samples with empirical distribution $\Ph$, $\Tc=\argmin_{\T\in\TT}\min_{P\in \PP_{\T}}D(\Ph\|P)$. 
 It is shown in Lemma~\ref{l:MomentMatching} in Appendix~\ref{sec:InfProj} that
	\begin{equation}
	\label{e:ChowLiuTree}
	\Tc = \mathop{\mathrm{argmax}}_{\{\text{spanning trees } \T'\}}\sum_{e\in \EE_{\T'}} |\widehat{\mu}_e|\,,
	\end{equation}
	where for $e=(i,j)$, $\wmu_{e}=\E_{\Ph}X_iX_j$ is the empirical correlation between variables $X_i$ and $X_j\,.$

Chow and Wagner~\cite{chow1973consistency}  showed 
that the maximum likelihood tree is consistent for structure learning of general discrete tree models, i.e., in the limit of large sample size the correct graph structure is found. 
More recently, detailed analysis of error exponents was carried out by Tan et al.~\cite{tan2011large, tan2011learning}. 
A variety of other results and generalizations have appeared, including for example Liu et al.'s work on forest estimation with non-parametric potentials~\cite{liu2011forest} (we will not address general potentials in this paper).  

\section{Learning trees to make predictions} \label{s:learningforPred}
In order to place the learning for predictions problem into context, we first discuss the problem of exact structure learning and give tight (up to a constant factor) sample complexity for that problem. Then, in Section \ref{s:Inferning} we define the ssTV distance $\LL^{(k)}$, explain how it relates to prediction, and in Section~\ref{s:resultStatement} we
state our results.

\subsection{Exact recovery of trees}
\label{s:ExactTree}
The statistical performance of a structure learning algorithm is often measured using the zero-one loss, 
\begin{equation}
\label{e:zerooneLoss}
\LL^{0-1}(\T,\T') = \mathbf{1}_{\{\T\neq \T'\}}\,,
\end{equation}
meaning that the exact underlying graph must be learned (see e.g., \cite{BMS08, santhanam2012information,
tan2011learning,liu2011forest}).
The risk, or expected loss, of algorithm $\phi$ under some distribution $P\in \PP_{\T}(\al,\beta)$ is then given by the probability of reconstruction error,
$
\E_P\LL^{0-1}(\T,\phi(X^{(1:n)})) = \P (\phi(X^{(1:n)})\neq \T)\,,
$
and the maximum risk is
$\sup\{\P (\phi(X^{(1:n)})\neq \T): \T\in \TT, P\in\PP_\T(\al,\beta)\}$   for given $\al, \beta, p$ and $n$.

 The sample complexity of learning the correct tree underlying the distribution increases as edges become weaker, i.e., as $\alpha\to 0$, because weak edges are harder to detect. As the bound on maximum edge parameter $\beta$ increases, there is a similar increase in sample complexity (as shown by \cite{santhanam2012information,tandon2014information} for Ising models on general bounded degree graphs). In the context of tree-structured Ising models we have the following theorem:

\begin{theorem}[Samples necessary for structure learning]\label{t:structLower}
	Given $n<\frac1{16} e^{2\beta}/(\alpha \tanh\alpha)\log p$ samples, the worst-case probability of error over trees $\T\in \TT$ and distributions $P\in \PP_{\T}(\al,\beta)$ is at least half for any algorithm, i.e., $$\inf_{\phi}\sup_{\T\in \TT\atop P\in \PP_{\T}(\al,\beta)} P \big[\phi(X^{\small{(1:n)}})\neq \T\big] > 1/2\,.$$  
\end{theorem}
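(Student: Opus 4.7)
The plan is to prove the lower bound by applying Fano's inequality to a carefully constructed family of tree Ising models. The guiding intuition is that a strong edge of weight $\beta$ forces its endpoints $u,v$ to take equal values except with probability $(1-\tanh\beta)/2 \leq e^{-2\beta}$; consequently, swapping the endpoint of a weak attached edge from $u$ to $v$ alters the resulting distribution only on this rare event, which suppresses the available statistical signal by a factor of $e^{-2\beta}$.

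First, I would construct the hypothesis family. Fix two nodes $u,v$ joined by an edge of weight $\beta$, let $w_1,\dots,w_{p-2}$ denote the remaining nodes, and for each $i\in\{1,\dots,p-2\}$ let $\T_i$ be the spanning tree in which $w_i$ is attached to $v$ and every other $w_j$ is attached to $u$, with every $w$-edge having weight $\alpha$. These are $p-2$ distinct trees in $\TT$; let $P_i\in\PP_{\T_i}(\al,\beta)$ be the corresponding zero-field Ising model from~\eqref{e:Ising}.

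Second, I would bound the pairwise KL divergence $D(P_i\|P_k)$ for $i\ne k$. By the tree factorization~\eqref{eq:TreeFac}, the log-likelihood ratio $\log(P_i(x)/P_k(x))$ vanishes on $\{x_u=x_v\}$, and on the complement reduces to a sum of two terms of the form $\log\frac{1+\tanh\alpha\cdot x_w x_u}{1-\tanh\alpha\cdot x_w x_u}$ arising from the two differing edges. Using the Markov property to see that conditioning on $\{X_u\neq X_v\}$ under $P_i$ does not change the distribution of $X_{w_i}X_v$ or $X_{w_k}X_u$, a direct calculation yields $\E_{P_i}\bigl[\log\tfrac{1+\tanh\alpha\cdot X_wX_\ast}{1-\tanh\alpha\cdot X_wX_\ast}\bigr] = 2\alpha\tanh\alpha$ for each differing edge; combined with $\Pr_{P_i}(X_u\neq X_v)\leq e^{-2\beta}$, this gives
\[
 D(P_i\|P_k) \;\leq\; 4\alpha\tanh\alpha\cdot e^{-2\beta} \;\leq\; 4\alpha^{2}e^{-2\beta}.
\]

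Finally, Fano's inequality gives, for any estimator $\phi$,
\[
 \sup_{i}\Pr_{P_i}\bigl(\phi(X^{1:n})\neq\T_i\bigr) \;\geq\; 1 - \frac{4n\alpha^{2}e^{-2\beta}+\log 2}{\log(p-2)}.
\]
Substituting $n<\tfrac{1}{16}e^{2\beta}\alpha^{-2}\log p$ bounds the numerator by $\tfrac14\log p + \log 2$, which is strictly less than $\tfrac12\log(p-2)$ once $p$ is moderately large, producing the required error probability exceeding $1/2$. The main obstacle is the KL computation, specifically extracting the $e^{-2\beta}$ factor: the crucial observation is that the log-likelihood ratio is identically zero on $\{X_u=X_v\}$, so only the $O(e^{-2\beta})$-probability disagreement event contributes to $D(P_i\|P_k)$. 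Without this exact cancellation one would obtain only an $\alpha^{2}$ bound and lose the exponential dependence on $\beta$ that is central to the theorem.
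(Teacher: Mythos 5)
Your proof is correct and follows essentially the same route as the paper: Fano's inequality applied to a $\Theta(p)$-size family of trees that differ only in where a weak $\alpha$-edge attaches across a strong $\beta$-edge, with pairwise KL divergence bounded by $2\alpha\tanh\alpha(1-\tanh\beta)\leq 4\alpha^2e^{-2\beta}$. The differences are cosmetic --- the paper perturbs a path ($(p+1)/2$ hypotheses) and evaluates the symmetrized divergence via the exponential-family identity $J(\theta\|\theta')=\sum_{i<j}(\theta_{ij}-\theta'_{ij})(\mu_{ij}-\mu'_{ij})$, whereas you use a double-star ensemble and extract the $e^{-2\beta}$ factor from the vanishing of the likelihood ratio on $\{x_u=x_v\}$; both yield the same bound and the same conclusion.
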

 The proof, given in Section~\ref{s:structLowerPf}, applies Fano's inequality (Lemma~\ref{e:Fano}) to a large set of trees that are difficult to distinguish. The next theorem gives an essentially matching sufficient condition.

\begin{theorem}[Samples sufficient for structure learning]\label{t:structUpper}
	Fix an arbitrary tree $\T$ and Ising model $P\in\PP_{\T}(\al,\beta)$. If the number of samples is  $n>C e^{2\beta}\tanh^{-2}(\al)\log(p/\delta)$, then with probability at least $1-\delta$ the Chow-Liu algorithm recovers the true tree, i.e., $\Tc = \T$.
\end{theorem}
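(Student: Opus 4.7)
My plan is to use Equation~\eqref{e:ChowLiuTree}---which characterizes $\Tc$ as the maximum-weight spanning tree with edge weights $|\wmu_e|$---to reduce correctness of Chow--Liu to a family of pairwise comparisons, and then apply Bernstein's inequality to a carefully chosen \emph{difference} random variable whose variance shrinks together with the gap.

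Step 1 (reduction). By the standard cycle property of the MWST, $\Tc=\T$ holds whenever for every tree edge $e=(i,j)\in\EE_{\T}$ and every non-edge $f=(k,l)$ whose unique path in $\T$ passes through $e$, we have $|\wmu_e|>|\wmu_f|$. There are at most $p^3$ such (edge, non-edge) pairs, to be controlled by a union bound. Step 2 (population gap). By the Markov property and zero external field, $\mu_{kl}=\prod_{e''\in\mathsf{path}(k,l)}\mu_{e''}$, so writing $\mu_f=r\cdot\mu_e$ with $r$ equal to the product of $\mu_{e''}$ over the other edges on the $k$-to-$l$ path, we get $|r|\leq\tanh\beta$ (since that product contains at least one factor). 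Hence
\begin{equation*}
\Delta \;:=\; |\mu_e|-|\mu_f| \;=\; |\mu_e|\bigl(1-|r|\bigr) \;\geq\; \tanh\alpha\cdot(1-\tanh\beta).
\end{equation*}

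Step 3 (difference with correlated variance). Define
\begin{equation*}
Z^{(t)} \;:=\; \mathrm{sign}(\mu_e)\,X_i^{(t)}X_j^{(t)} \;-\; \mathrm{sign}(\mu_f)\,X_k^{(t)}X_l^{(t)},
\end{equation*}
so $|Z^{(t)}|\leq 2$ and $\E Z=\Delta$. Using the zero-field Ising-tree identity $\E[X_k\mid X_i]=X_i\,r_{ki}$ (and its analogue for $X_l,X_j$), a short computation gives $\E[X_iX_jX_kX_l]=r$, so $\mathrm{Cov}(X_iX_j,X_kX_l)=r(1-\mu_e^2)$, and finally
\begin{equation*}
\mathrm{Var}(Z) \;=\; (1-|r|)\bigl(2-\mu_e^2(1-|r|)\bigr) \;\leq\; 2(1-|r|).
\end{equation*}
The crucial feature is that $\mathrm{Var}(Z)$ and $\Delta$ are both controlled by the single quantity $1-|r|$. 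Step 4 (Bernstein and union bound). Bernstein's inequality gives
\begin{equation*}
\Pr\bigl[\widehat{Z}\leq 0\bigr] \;\leq\; \exp\Bigl(-\tfrac{n\Delta^2}{2\mathrm{Var}(Z)+2\Delta/3}\Bigr),
\end{equation*}
and plugging in Steps~2--3 the exponent is at least a constant times $n\,\mu_e^2(1-|r|) \gtrsim n\,\tanh^2\alpha\,(1-\tanh\beta) \gtrsim n\,e^{-2\beta}/\max\{\alpha^{-2},1\}$. Choosing $n\geq Ce^{2\beta}\max\{\alpha^{-2},1\}\log(p/\delta)$ makes this at most $\delta/p^3$, and a union bound over the $\leq p^3$ comparisons finishes the main argument. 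A separate Hoeffding bound on $|\wmu_e-\mu_e|$ handles the technicality that $\mathrm{sign}(\wmu_e)=\mathrm{sign}(\mu_e)$, so that $\widehat{Z}>0$ is really equivalent to $|\wmu_e|>|\wmu_f|$.

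The main obstacle is the variance computation in Step~3. Applying Bernstein separately to $\wmu_e$ and $\wmu_f$ yields only $e^{4\beta}$ dependence in the weak-edge regime, because there $\mathrm{Var}(X_iX_j)=1-\mu_e^2$ is $\Theta(1)$ while $\Delta$ is only $\Theta(\alpha e^{-2\beta})$. The correct $e^{2\beta}$ scaling comes from the strong correlation between the pairwise statistics $X_iX_j$ and $X_kX_l$ that is enforced by the tree structure, and which collapses $\mathrm{Var}(Z)$ to $O(1-|r|)=O(e^{-2\beta})$ precisely when the gap is smallest. Once this identity is in hand, everything else is a straightforward union bound.
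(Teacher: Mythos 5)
Your core idea --- comparing $\wmu_e$ and $\wmu_f$ through the single difference statistic $Z=\mathrm{sign}(\mu_e)X_iX_j-\mathrm{sign}(\mu_f)X_kX_l$, whose variance $(1-|r|)(2-\mu_e^2(1-|r|))$ collapses together with the gap $\Delta=|\mu_e|(1-|r|)$, and then applying Bernstein --- is exactly the mechanism the paper uses: Lemma~\ref{l:Zconcentration} applies Bernstein to $Z_{e,u,\ut}=X_wX_{\wt}-X_uX_{\ut}$ with precisely this variance, and this is what produces the $e^{2\beta}$ rather than $e^{4\beta}$ dependence. Your reduction in Step~1 differs slightly and is arguably cleaner for this particular theorem: you invoke the cycle property of the unique max-weight spanning tree directly (every tree edge must beat every non-edge routed through it), with a union bound over at most $p^3$ comparisons, whereas the paper routes through the event $\Pcorr$, the two-trees Lemma~\ref{l:edgePair}, and Lemma~\ref{l:greed} --- machinery it needs anyway for the approximate-recovery results. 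Both reductions are valid and give the same union bound.

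There is, however, a genuine gap in your Step~4 sign handling. You claim that a Hoeffding bound ensuring $\mathrm{sign}(\wmu_e)=\mathrm{sign}(\mu_e)$ makes $\widehat Z>0$ ``really equivalent to $|\wmu_e|>|\wmu_f|$.'' It does not: with $s_e=\mathrm{sign}(\mu_e)$, $s_f=\mathrm{sign}(\mu_f)$ and $s_e\wmu_e=|\wmu_e|$, the event $\widehat Z>0$ reads $|\wmu_e|>s_f\wmu_f$, and if $\wmu_f$ happens to have the opposite sign from $\mu_f$ then $s_f\wmu_f=-|\wmu_f|$ and the inequality is vacuous, saying nothing about $|\wmu_e|$ versus $|\wmu_f|$. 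You cannot rule out this sign flip by Hoeffding on $\wmu_f$, because $|\mu_f|$ is a product of correlations along a path and can be arbitrarily (even exponentially) small; and falling back on $|\wmu_f|\leq|\mu_f|+\epsilon$, $|\wmu_e|\geq|\mu_e|-\epsilon$ requires $\Delta>2\epsilon$, which is the naive $e^{4\beta}$ route you explicitly set out to avoid. The fix is to also control the companion sum statistic $Y=s_eX_iX_j+s_fX_kX_l$: since $|\wmu_e|-|\wmu_f|=\min\{\widehat Z,\widehat Y\}$ when $s_e\wmu_e=|\wmu_e|$, you need $\widehat Y>0$ as well. This is cheap ($\Ex[Y]=|\mu_e|(1+|r|)\geq\tanh\alpha$ with variance at most $4$, so Bernstein costs only $n\gtrsim\tanh^{-2}(\alpha)\log(p/\delta)$), and it is exactly what the paper does via Lemmas~\ref{l:Yconcentration} and~\ref{l:MissingEdgeZY}, where the identity $\wmu_e^2-\wmu_f^2=n^{-2}\bigl(\sum_iZ^{(i)}\bigr)\bigl(\sum_iY^{(i)}\bigr)$ handles the absolute values without any sign bookkeeping. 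With that one addition your argument goes through.
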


The proof is presented in Section~\ref{s:structUpperPf}. Assuming that $\alpha$ is bounded above by a constant (which is the interesting regime), Theorems~\ref{t:structLower} and~\ref{t:structUpper} give matching bounds (up to numerical constant) for the sample complexity of learning the tree structure of an Ising model with zero external field. 
The necessary number of samples increases as the minimum edge weight $\alpha$ decreases, so if edges can be arbitrarily weak, it is impossible to learn the tree given any bounded number of samples. \rvw{Figures~\ref{fig:alpha-pr} and~\ref{fig:beta-pr} in Section~\ref{s:NumSim} present numerical simulation results supporting this observation.}

If the goal is merely to make accurate predictions, it is natural to seek a less stringent, approximate notion of learning. 
Several papers consider learning a model that is close in KL-divergence, e.g. \cite{abbeel2006learning,liu2011forest,
heinemann2014inferning,jog2015model,tan2011learning}.  The sample complexity of learning a model to within constant KL-divergence $\epsilon$ scales at least \emph{linearly} with the number of variables $p$, an unrealistic requirement in the high-dimensional setting of interest. Using a number of samples scaling logarithmically in dimension requires relaxing the KL-divergence to scale linearly in $p$, but this does not imply a non-trivial guarantee on the quality of approximation for marginals of few variables (as done in this paper). The same observation is true for the total variation as the measure of distance. The sample complexity of learning a model to within constant TV distance between the learned model \rvw{and} the original joint distribution over $p$ variables scales at least linearly with $p$.

In the next section,
we study estimation with respect to the small-set TV loss, which captures accuracy of prediction based on few observations. We will see that the associated sample complexity is independent of the edge strength lower bound $\alpha$ in the original model.

\subsection{Small set total variation}
\label{s:Inferning}

For a subset of nodes $\SS\subseteq [p]$, we denote by $P_{\SS}$ the marginal distribution $P_\SS(x_\SS) =\sum_{x_{\VV\setminus\SS}}P(x) $.

Given two distributions $P$ and $Q$ on the same space, for each $k\geq 1$ the small-set total variation distance is the maximum total variation over all size $k$ marginals, and is denoted by
$$\LL^{(k)}(P,Q)\triangleq \max_{\SS:|\SS|= k} d_\mathrm{TV}(P_\SS , Q_\SS)\,.$$ Note that $\LL^{(k)}$ is non-decreasing in $k$. 
One can check that $\LL^{(k)}$ satisfies the triangle inequality: for any three distributions $P,R,Q$, 
\begin{equation}\label{e:triangle}
\LL^{(k)}(P,R) + \LL^{(k)}(R,Q) \geq \LL^{(k)}(P,Q)\,.
\end{equation}

Closeness of $P$ and $Q$ in $\LL^{(k)}$ implies that the respective posteriors conditioned on subsets of variables of size $k-1$ are close on average.
To see this, suppose that we wish to compute $P(X_i=+|X_\SS)$. We measure performance of the approximation $Q$ by the expected magnitude of error $|P(x_i=+|X_\SS) - Q(x_i=+|X_\SS)|$ averaged over $X_\SS$: 
\begin{align*}
\Ex_{{X_\SS}} |P(X_i=+&|X_\SS) - Q(X_i=+|X_\SS)|\\
 &=\sum_{x_\SS} |P(X_i=+,X_\SS=x_\SS) - Q(X_i=+|X_\SS=x_\SS)P(X_\SS=x_\SS)|
\\
&\leq \sum_{x_\SS} |P(X_i=+,X_\SS=x_\SS) -Q(X_i=+,X_\SS=x_\SS) |
+ \sum_{x_\SS}|Q(x_\SS) 
-P(x_\SS)|
\\&\leq 2 \LL^{(|\SS|+1)}(P,Q)\,.
\end{align*}
The last inequality is a consequence of monotonicity of  $\LL^{(k)}$ in $k$. 

In this paper we focus mostly on $\LL^{(2)}$. Implications for $k\geq 3$ are stated in Corollary~\ref{cor:margK}, and discussed in Appendix~\ref{sec:MarginalK}.  For trees $\T,\Tt\in\TT$ and  distributions $P\in\mathcal{P}_{\T}$ and $\widetilde{P}\in\mathcal{P}_{\widetilde{\T}}$, let $e=(i,j)\in\EE_{\T}$, $\mu_{e}=\Ex_{P}X_iX_j$ and for $e'=(i,j)\in\EE_{\Tt}$, $\widetilde{\mu}_{e'}=\Ex_{\widetilde{P}}X_iX_j$. It will be useful to express $\LL^{(2)}$ as
\begin{align}
\LL^{(2)} (P , \widetilde{P}) & = \max_{w,\wt \in \VV}  \frac{1}{2} \sum_{x_w,x_{\wt}\in\{-,+\}^2} |P(x_w,x_{\wt})-\widetilde{P}(x_w,x_{\wt})|\nonumber\\
& = \max_{w,\wt \in \VV} \, \frac{1}{2} \left|\prod_{e\in\path_{\T}(w,\wt)}\mu_e-\prod_{e'\in\path_{\Tt}(w,\wt)}\widetilde{\mu}_{e'}\right|\,. \label{e:LL2}
\end{align}
The second equality is derived by noting that $P(x_i=+)=1/2$, $P(x_w,x_{\wt})=\left[1+x_w x_{\wt}\Ex_{P}[X_w X_{\wt}]\right]/4$ and analogously for $\widetilde{P}$. Also, as noted above after~\eqref{e:Ising}, it is immediate from Lemma~\ref{l:IndependentError} that $\Ex_{P} X_w X_{\wt} =\prod_{e\in\path_{\T}(w,\wt)}\mu_{e}$. The same holds for $\widetilde{P}\in\PP_{\Tt}$, which gives~\eqref{e:LL2}.
 
\subsection{Main result}
\label{s:resultStatement}

Our main contribution is to prove upper and lower bounds on the number of samples required to estimate a tree close in $\LL^{(2)}$ to the true one. 
An upper bound on the number of samples is obtained for the Chow-Liu algorithm by bounding the expression in~\eqref{e:LL2}. 
The Chow-Liu algorithm produces the maximum likelihood tree, which minimizes the expected zero-one loss in~\eqref{e:zerooneLoss}. 
As shown in Theorem~\ref{t:CLLowerBound}, the maximum likelihood tree also performs well in terms of accuracy of pairwise marginals. 

Recall from~\eqref{eq:projoperator} that $\Pi_{\T}(P)$ is the reverse information projection of the distribution $P$ onto the set of zero-field Ising models on tree $\T$.

\begin{theorem}[Learning for predictions using Chow-Liu algorithm] 
	\label{t:CLLowerBound}
 For $\T\in\TT$, let the distribution $P\in \PP_{\T}(0,\beta)$. Given \rvw{$n> C \max\{e^{2\beta},\eta^{-2}\}\log\frac{p} {\delta}$} samples, if  $\Tc$ is the Chow-Liu tree as defined in~\eqref{e:ChowLiuTree}, then with probability at least $1-\delta$ we have 
	$ \LL^{(2)}(P,\Pi_{\Tc}(\Ph))<\eta\,.$
	\end{theorem}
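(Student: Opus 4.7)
Let $Q = \Pi_{\Tc}(\Ph)$; by the moment-matching lemma, $Q \in \PP_{\Tc}$ has edge correlations $\wmu_e$ on $\Tc$. Using~\eqref{e:LL2} this reduces the theorem to showing
$$\max_{w,\wt \in \VV}\Bigl|\mu_{w\wt} - \prod_{e \in \path_{\Tc}(w,\wt)} \wmu_e\Bigr| \;\leq\; 2\eta$$
with probability $1-\delta$. I would insert the empirical pairwise correlation $\wmu_{w\wt}$ as a pivot, splitting the error into an \emph{empirical pairwise} term $A = |\mu_{w\wt} - \wmu_{w\wt}|$ and a \emph{path-product} term $B = |\wmu_{w\wt} - \prod_{e \in \path_{\Tc}(w,\wt)} \wmu_e|$.

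Term $A$ is pure concentration: Hoeffding applied to the bounded products $X_w X_\wt \in \{-1,+1\}$ followed by a union bound over the $\binom{p}{2}$ pairs yields $\max_{w,\wt} A \lesssim \eta$ once $n \gtrsim \eta^{-2}\log(p/(\eta\delta))$. This is precisely the content of the event $\Ecorr$ with $\eps$ of order $\eta$, and accounts for the $\eta^{-2}\log(p/(\eta\delta))$ term in the stated sample complexity.

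The substantive work is in bounding $B$ uniformly in $(w,\wt)$, and this is where the $e^{2\beta}\log(p/\delta)$ sample complexity enters, via two further Chow-Liu-specific events. First, the strong-edge event $\Pcorr$ asserts that every edge of $\T$ whose correlation satisfies $|\mu_e|\ge \Theta(\eta)$ (the threshold being calibrated so that the resulting per-edge errors aggregate to $\eta$) is recovered by $\Tc$. The argument is an adaptation of Theorem~\ref{t:structUpper}: one compares each strong edge of $\T$ to any candidate replacement via Hoeffding on empirical correlations, and the gap---smallest for edges of strength $\beta$---ultimately requires $n \gtrsim e^{2\beta}\log(p/\delta)$. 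Second, the cascade event $\Ncorr$ controls the rest: for each $(i,j) \in \path_{\Tc}(w,\wt)$, one unrolls $\mu_{ij} = \prod_{e \in \path_\T(i,j)} \mu_e$ to convert the $\Tc$-path product into a walk product in $\T$, in which every edge of $\path_\T(w,\wt)$ appears an odd number of times and every other edge an even number of times. Comparing this walk product to $\mu_{w\wt}$ and then to $\wmu_{w\wt}$ via $\Ecorr$ yields the remaining piece of $B$.

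The main obstacle is quantitative control of $B$: a naive telescoping along $\path_{\Tc}(w,\wt)$ is additive in the path length, which can be $\Omega(p)$. I would sidestep this via a dichotomy. If $|\mu_{w\wt}| \lesssim \eta$, both products in $B$ are small in absolute value and the bound is immediate. Otherwise every edge of $\path_\T(w,\wt)$ must itself be strong, so $\Pcorr$ places all of them in $\Tc$; the only discrepancy between $\path_{\Tc}(w,\wt)$ and $\path_\T(w,\wt)$ then consists of short detours across weak edges, each of which contributes a multiplicative factor close to $1$ that accumulates sub-additively rather than additively. Formalizing this uniformly in $(w,\wt)$ is exactly what the combinatorial lemma of Section~\ref{s:Appendix2} provides, relating the common-subtree structure of $\Tc$ and $\T$. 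A final union bound over $\Ecorr \cap \Pcorr \cap \Ncorr$, with thresholds chosen so each contribution to $\LL^{(2)}$ is at most $\eta/4$, delivers the claimed failure probability $\delta$.
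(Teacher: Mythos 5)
Your reduction via~\eqref{e:LL2} and the pivot $\wmu_{w\wt}$ is a legitimately different decomposition from the paper's (which instead applies the triangle inequality~\eqref{e:triangle} through $\Pi_{\Tc}(P)$, splitting the error into a graph-estimation term and a parameter-estimation term). The events $\Ecorr$, $\Pcorr$, $\Ncorr$ and the two-trees lemma are indeed the right ingredients. But the dichotomy you use to control $B$ has a genuine gap in the small-correlation branch. When $|\mu_{w\wt}|\lesssim\eta$, the first term $\wmu_{w\wt}$ of $B$ is small by $\Ecorr$, but the claim that $\bigl|\prod_{e\in\path_{\Tc}(w,\wt)}\wmu_e\bigr|$ is small is not ``immediate.'' The walk/parity argument gives $\bigl|\prod_{e\in\path_{\Tc}(w,\wt)}\mu_e\bigr|\leq|\mu_{w\wt}|$ for the \emph{population} correlations, but passing to the empirical product reintroduces exactly the cascade you set out to avoid: naive telescoping costs $d\eps$ with $d=\Omega(p)$, and Lemma~\ref{l:prod-iid-rv} cannot be applied directly because the variables $X_iX_j$ for $(i,j)\in\EE_{\Tc}\setminus\EE_{\T}$ are not independent of the others. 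The paper's resolution (Proposition~\ref{p:EdgeEstError}) is to split $\path_{\Tc}(w,\wt)$ into segments lying in $\EE_{\T}\cap\EE_{\Tc}$, separated by falsely added edges which, by $\Pcorr$ and Lemma~\ref{l:edgePair}, are provably weak ($|\mu_g|\leq\tau$); the resulting bound grows linearly in the number of such edges but is damped geometrically by $\tau^{t-1}$. Your proposal names no mechanism that plays this role; the remark about ``short detours'' in the other branch is also off, since if every edge of $\path_{\T}(w,\wt)$ lies in $\EE_{\Tc}$ then the two paths coincide exactly and there are no detours at all.

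Even if you repair the small branch (e.g., by observing that a differing path must contain a weak falsely added edge, so the empirical product is at most $\tau+\eps$), your threshold calibration gives a quantitatively weaker theorem. The dichotomy at $|\mu_{w\wt}|\sim\eta$ forces $\tau=4\eps/\sqrt{1-\tanh\beta}\lesssim\eta$, i.e.\ $\eps\lesssim\eta e^{-\beta}$ and hence $n\gtrsim e^{2\beta}\eta^{-2}\log p$ --- the \emph{product} of the two factors. That is the sample complexity of the truncation algorithm (Proposition~\ref{prop:TruncationUpper}), which the paper explicitly notes is asymptotically strictly worse than the theorem's $\max\{e^{2\beta},\eta^{-2}\}\log p$. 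Decoupling the two factors is the main technical point of Proposition~\ref{p:projectedDist}: rather than a single global threshold comparison, the paper runs an induction on the $\Tc$-distance $d$ via the recurrence $\Delta(d)$, using Lemma~\ref{l:corrCloseMST} to get the two-sided bound $|\mu_f|-4\eps\leq|\mu_g|\leq|\mu_f|$ (not merely that $f$ is weak), which yields the per-step error $8\eps+4\eps e^{\beta}(2\eta+\eta^2)$ and closes under the decoupled condition $\eps<\min\{\eta/16,e^{-\beta}/24\}$. You would need either this induction or a substitute for it to reach the stated rate.
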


The main challenge is that the number of samples assumed to be available in Theorem~\ref{t:CLLowerBound} is not sufficient for structure learning, as can be seen by comparing with Theorem~\ref{t:structLower}. This means that accurate marginals must be computed \emph{using possibly the wrong tree}. The proof is sketched in Section~\ref{s:ProofSketch}. 

We also lower bound the  number of samples necessary for small $\LL^{(2)}$ loss. Let the learning algorithm be $\Psi: \{-1,+1\}^{p\times n}\to \PP$ where $\PP=\cup_{\T} \PP_{\T}$ is the set of tree-structured Ising models with no external field defined in~\eqref{e:Ising}.

\begin{theorem}[Samples necessary for small ssTV]\label{t:inferenceUpper}
	Fix $\eta>0$. Suppose $\tanh(\beta)>\tanh(\alpha)+2\eta$ and  \rvw{$n<\,C \,[1-(\tanh  (\alpha)+2\eta)^2]\,\eta^{-2}\,\log p$}. The worst-case probability of $\LL^{(2)}$ loss greater than $\eta$, taken over trees $\T\in \TT$ and distributions $P\in \PP_{\T}(\al,\beta)$, is at least half for any algorithm, i.e., $$\inf_{\Psi}\sup_{\T\in \TT\atop P\in\PP_{\T}(\al,\beta)} \P\left[\LL^{(2)}(P,\Psi(X^{\small{(1:n)}})) > \eta\right]>1/2\,.$$  
\end{theorem}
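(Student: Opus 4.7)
The plan is a standard Fano-type information-theoretic lower bound, where the challenge reduces to finding a packing of tree Ising models that is simultaneously well-separated in $\LL^{(2)}$ and close in KL. I would construct an explicit family of $M = p-2$ hypotheses as follows. Fix $\eta_0 = c\eta$ for an absolute constant (say $c=8$) and let $\theta_0 = \mathrm{atanh}(\eta_0)$. For each $i \in \{3,\ldots,p\}$, let $\T_i$ be the tree with edges $\{(1,k):k\in\{2,\ldots,p\},k\neq i\}\cup\{(2,i)\}$, i.e., a star centered at node $1$ in which node $i$ has been detached from $1$ and re-attached to node $2$. Let $P_i\in\PP_{\T_i}$ have all edge weights equal to $\theta_0$.

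First I would verify the $\LL^{(2)}$ separation: for $i\neq j$, consider the pair $\{1,i\}$. Under $P_j$ the edge $(1,i)$ is present so $\mu_{1i} = \eta_0$; under $P_i$ the path is $1\!-\!2\!-\!i$ so $\mu_{1i}=\eta_0^2$. By \eqref{e:LL2} the pairwise TV is $\eta_0(1-\eta_0)/2$, which exceeds $2\eta$ for $\eta_0 = 8\eta$ (and $\eta$ bounded away from $1$; otherwise the theorem is vacuous). Thus $\LL^{(2)}(P_i,P_j) > 2\eta$ for all $i\neq j$.

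Next I would bound the pairwise KL. Because every $\T_i$ has the same number of edges, all of weight $\theta_0$, the partition functions coincide: $\Phi(\T_i) = \log 2 + (p-1)\log(2\cosh\theta_0)$. Hence
\[
D(P_i\|P_j) \;=\; \theta_0\Bigl[\sum_{e\in E_{\T_i}}\mu^{P_i}_e - \sum_{e\in E_{\T_j}}\mu^{P_i}_e\Bigr].
\]
Only the two symmetric differences contribute: $E_{\T_i}\setminus E_{\T_j} = \{(1,j),(2,i)\}$ with correlation $\eta_0$ each under $P_i$, while $E_{\T_j}\setminus E_{\T_i}=\{(1,i),(2,j)\}$ with correlation $\eta_0^2$ each under $P_i$. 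This gives the clean bound $D(P_i\|P_j) = 2\theta_0\eta_0(1-\eta_0) = O(\eta\cdot\mathrm{atanh}\eta)$.

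Finally I would apply Fano's inequality to $W\sim\mathrm{Unif}\{3,\ldots,p\}$ with samples $X^{1:n}\sim P_W$, obtaining
\[
\P(\widehat{W}\neq W) \;\geq\; 1 - \frac{n\max_{i,j} D(P_i\|P_j) + \log 2}{\log(p-2)},
\]
which exceeds $1/2$ whenever $n < C\log p/(\theta_0\eta_0) = \Theta\bigl(\eta^{-1}\mathrm{atanh}^{-1}(\eta)\log p\bigr)$. To convert this into an $\LL^{(2)}$ lower bound, given any algorithm $\phi$ define $\widehat{W} = \argmin_j\LL^{(2)}(P_j,\phi(X^{1:n}))$. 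The $2\eta$-separation plus the triangle inequality \eqref{e:triangle} forces $\widehat{W}=W$ on the event $\{\LL^{(2)}(P_W,\phi(X^{1:n}))\leq\eta\}$, so the probability of that event is at most $\P(\widehat{W}=W) < 1/2$, completing the proof. The main obstacle is the tension between separation (wanting $\eta_0$ large) and low KL (wanting $\eta_0$ small); the above construction essentially saturates this trade-off because the two models differ at only one ``moved'' weak edge, making the KL scale as $\theta_0\eta_0$ rather than as the full $(p-1)\theta_0\eta_0$ one would get from generic perturbations.
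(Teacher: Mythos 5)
Your proof is correct and follows the same high-level strategy as the paper's: Fano's inequality applied to a family of $\Theta(p)$ tree Ising models, each obtained from a base tree by perturbing a single weak edge of strength $\mathrm{atanh}(\Theta(\eta))$, so that the pairwise KL divergence is $O(\eta\,\mathrm{atanh}(\eta))$ while the $\LL^{(2)}$ separation is $\Omega(\eta)$. The packing sets differ: the paper takes a path with every edge of weight $\mathrm{atanh}(\eta)$ and forms model $m$ by setting $\theta_{m,m+1}=0$, whereas you take a star and form model $i$ by re-attaching leaf $i$ to node $2$. Your construction has two advantages. First, every hypothesis is a genuine spanning tree with all edge weights equal to $\mathrm{atanh}(\eta_0)$, so the partition functions cancel exactly and each $P_i$ stays inside the class $\PP_{\T_i}(\al,\beta)$ (provided $\al\le\mathrm{atanh}(8\eta)\le\beta$), whereas the paper's zeroed edge makes $\theta^{(m)}$ a forest that is not literally in $\PP_{\T}(\al,\beta)$ for $\al>0$. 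Second, you spell out the reduction from $\LL^{(2)}$ estimation to testing (the $\argmin_j\LL^{(2)}(P_j,\cdot)$ test plus the triangle inequality~\eqref{e:triangle}), which the paper leaves implicit. Two small caveats: your separation computation $4\eta(1-8\eta)>2\eta$ requires $\eta<1/16$ (not merely ``bounded away from $1$''), and you should state explicitly that $\mathrm{atanh}(8\eta)\in[\al,\beta]$ is needed for the hypotheses to belong to the stated class; both are constant-level issues that the paper's own terse proof also elides.
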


Theorem~\ref{t:inferenceUpper} is proved in Section~\ref{s:pairwiseUpper}. 
\rvw{As noted earlier, the assumption $\tanh(\beta)>\tanh(\alpha)+2\eta$ captures the scenario that the precise values of the edge parameters are not known a priori. In the extreme where $\alpha = \beta$, the entire problem is quite different (and we believe less realistic). We state a lower bound for this setting in Appendix~\ref{sec:LBSmallSSTV}, which is not directly comparable to the above theorems.}

If $\alpha$ is bounded above by a constant, then Theorems~\ref{t:CLLowerBound} and~\ref{t:inferenceUpper} have the same dependence on $\eta$. 
  The theorems imply the following bounds on risk.

\begin{corollary}[Upper bound for risk] 
\label{t:RiskUppererBound}
 For $\T\in\TT$, let the distribution $P\in \PP_{\T}(0,\beta)$. Given $n$  samples with empirical distribution $\Ph$, if the tree $\Tc$ is the Chow-Liu tree as defined in~\eqref{e:ChowLiuTree}, then 
	\rvw{$$\Ex{\left[\LL^{(2)}(P,\Pi_{\Tc}(\Ph))\right]}< C' p\exp(-C n e^{-2\beta})+ C''\sqrt{\frac{ \log p}{{n}}} \,.$$}
\end{corollary} 
\vspace{-.2in}
\begin{corollary}[Lower bound for risk]
\label{t:RiskLowererBound}
Suppose $\tanh(\beta)-\tanh(\alpha)>1/2$ and one observes  $n$ samples. Then the minimax risk over trees $\T\in \TT$ and distributions $P\in \PP_{\T}(\al,\beta)$ is lower bounded by \rvw{$$\inf_{\Psi}\sup_{\T\in \TT\atop P\in\PP_{\T}(\al,\beta)} \Ex\left[\LL^{(2)}(P,\Psi(X^{(1:n)})) \right]>\min\left\{\frac{1}{24}\sqrt{\frac{\log p}{n}},\frac{1}{12}\right\}\,.$$}
\end{corollary}
\rvw{Corollary~\ref{t:RiskUppererBound} is proved in Appendix~\ref{s:risk-upper-bd}.
Proof of Corollary~\ref{t:RiskLowererBound} is immediate from the statement of Theorem~\ref{t:inferenceUpper}.}
The upper bound in Corollary~\ref{t:RiskUppererBound} has an extra term that depends on $\beta$ compared to the lower bound of Corollary~\ref{t:RiskLowererBound}. 
We conjecture that the lower bound is tight.
\section{Illustrative example and algorithm comparison}
\subsection{Three node Markov chain}
\label{s:MarkovChain} 

A Markov chain with three nodes captures a few of the key ideas developed in this paper. 
Let $P(X_1,X_2,X_3)$ be represented by the tree $\T_1$ in Figure~\ref{f:MarkovChainExample} in which $X_1\leftrightarrow X_2\leftrightarrow X_3$ form a Markov chain with correlations $\mu_{12}$, $\mu_{23}$ and $\mu_{13}=\mu_{12}\mu_{23}$. Without loss of generality, we assume $\mu_{12},\mu_{23} > 0$. Suppose that for some small value $\epsilon$, $\mu_{12}=1-\mu_{23}=\epsilon$.

Given $n$ samples from $P$, the empirical correlations $\widehat{\mu}_{12}$, $\widehat{\mu}_{23}$ and $\widehat{\mu}_{13}$ are concentrated around $\mu_{12}=\eps$, $\mu_{23}=1-\eps$ and $\mu_{13}=\mu_{12}\mu_{23}=\eps(1-\eps)$. Let $\widehat{\mu}_{12}=\mu_{12}+z_{12}$, $\widehat{\mu}_{23}=\mu_{23}+z_{23}$ and $\widehat{\mu}_{13}=\mu_{12}\mu_{23}+z_{13}$, where the fluctuations of $z_{12},\, z_{23}$ and $z_{13}$ shrink as $n$ grows. It is useful to imagine the typical fluctuations of $z_{ij}$ to be on the order $\eps/10$.

\begin{figure}
			\begin{center}
				\includegraphics[width=3in]{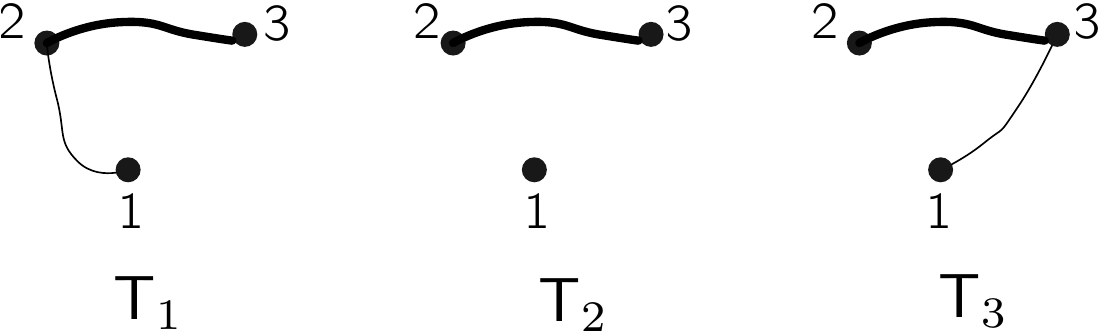}
				\caption{The original distribution factors according to $\T_1$. The width of the edges corresponds to their strength. The forest approximation algorithm (defined in Section~\ref{s:truncation}) recovers $\Th=\T_2$ since the correlation between $X_1$ and each of the other variables is not strong enough to confidently recover any edge to node $1$. The Chow-Liu tree $\Tc$ is either $\T_1$ or $\T_3$, depending on the realization of the samples.}
			\label{f:MarkovChainExample}
			\end{center}
\end{figure}

Since $\max\{\mu_{12},\mu_{13}\}=\max\{\epsilon\,,\,\epsilon(1-\epsilon)\}=\epsilon\ll \mu_{23}$, concentration bounds guarantee that  with high probability $\wmu_{23} > \max\{\wmu_{12}\,,\,\wmu_{13}\}$ and  the (greedy implementation of) Chow-Liu algorithm described in~\eqref{e:ChowLiuTree} adds the edge $(2,3)$  to $\Tc$. However, because $\mu_{12}-\mu_{13}=\epsilon^2$ is smaller than the fluctuations of $z_{12}$ and $z_{13}$  there is no guarantee that $\widehat{\mu}_{12} > \widehat{\mu}_{13}$: if $z_{13}-z_{12} > \epsilon^2$, then edge $(1,3)$ is added and $\Tc=\T_3\neq \T_1$. 

The preceding discussion provides the intuition underlying a statistical characterization of the possible errors made by the Chow-Liu algorithm. To make this quantitative, later on in the proof, we determine a value  $\tau:=\tau(n,\beta,\delta)$ so that any (strong) edge $e$ with $|\mu_{e}|\geq \tau$ is recovered by the Chow-Liu algorithm with probability at least $1-\delta$. Equivalently, if there is a mistake made by the Chow-Liu algorithm such that $e\in\EE_{\T}$ but $e\notin\EE_{\Tc}$, then $|\mu_e|\leq \tau$ (i.e., missed edges are weak). This is going to play a key role in bounding the ssTV $\LL^{(2)}$ for $\Tc$, whether or not it is equal to $\T$.

In the regime where $n$ is not large enough to guarantee the correct recovery of all the edges in the tree, there are two natural strategies:
\paragraph{I. Forest approximation algorithm}
This algorithm attempts to recover a forest $\mathsf{F}$, a good approximation of the original tree $\T$ in the sense that $\EE_{\mathsf{F}}\subseteq \EE_{\T}\,.$
This is accomplished by finding a forest consisting of sufficiently strong edges, i.e. having weight at least $\tau$ for an appropriate value of $\tau$. As shown by Tan et al. in \cite{tan2011learning}, such a forest can be obtained by running the Chow-Liu algorithm and removing the edges with weight below $\tau$.
The details of this algorithm and its sample complexity will be discussed in Section \ref{s:truncation}.
\paragraph{II. Chow-Liu Algorithm} We can use the Chow-Liu tree as our estimated structure despite the fact that it may well be incorrect.

For our three-node example, the forest approximation algorithm would return $\Th=\T_2$ in Figure~\ref{f:MarkovChainExample}, whereas Chow-Liu would give $\Th=\T_1$ or $\Th=\T_3$. 
To focus on the implication of graph structure estimation (as opposed to parameter estimation), we compare the \emph{loss due to graph estimation error}, defined as $\LL^{(2)}(P,\Pi_{\Th}(P))$, for the above cases:
\begin{align*}
\Th= \T_1 & \quad\rightarrow \quad\LL^{(2)}(P,\Pi_{\Th}(P)) = 0\,, \\
\Th= \T_2 & \quad\rightarrow \quad\LL^{(2)}(P,\Pi_{\Th}(P)) = |\mu_{12}|=\eps\,, \\
\Th= \T_3 & \quad \rightarrow\quad \LL^{(2)}(P,\Pi_{\Th}(P)) = |\mu_{12}|(1-\mu_{23}^2)=\epsilon^2 (2-\epsilon)\,.
\end{align*}
Evidently, the loss due to graph estimation error in the forest approximation algorithm is bigger than the Chow-Liu algorithm, whether or not the latter recovers the true tree. This is because the Chow-Liu algorithm does not make arbitrary errors in estimating the tree: errors happen when both the original tree and the estimated tree describe the original distribution rather well. Theorem~\ref{t:CLLowerBound} makes this formal.

\subsection{Forest approximation}		
\label{s:truncation}
In the regime where exact recovery of the tree is impossible, a reasonable goal is to instead find a forest approximation to the tree. In this section we analyze a natural truncation algorithm, which thresholds to zero edges with correlation below a specified value $\tau(\epsilon)$. 

There is extensive literature on estimating forests in the fully observed setting of this paper, including \cite{tan2011learning,liu2011forest}. Mossel in \cite{Mosseldistortedmetric} studied the problem of learning phylogenetic forests, where samples are only observed at the leaves of the tree. They quantified the idea that most edges of phylogenies are easy to reconstruct. In the regime that the  sample complexity of structure learning is too high, they instead estimate a forest. An upper bound on the number of edges necessary to glue together the forest to get the original tree is provided as a function of the number of leaves, the minimum edge weight, and the metric distortion bounds. Our results in this section are consistent with the asymptotic conditions on the thresholds given in \cite{tan2011learning} by 
Tan et al. for forest approximation of general distributions over trees. 

The forest approximation algorithm considered in this section thresholds to zero the edges with correlations below $\tau(\eps)=\frac{4\epsilon}{\sqrt{1-\tanh\beta}}$ for $\epsilon=\sqrt{2/n\log(2p^2/\delta)}$. 
This is equivalent to finding the maximum-weight spanning forest over the complete graph with edge weights $|\wmu_{e}|-\tau(\eps)-\eps$. The output $\Th=(\VV,\EE_{\Th})$ is a truncated version of $\Tc$ such that the empirical correlation between any pair of nodes $(i,j)\in\EE_{\Th}$ satisfies $|\wmu_{ij}| \geq \tau(\eps)+\epsilon$.
\begin{proposition} \label{prop:TruncationUpper}
Given \rvw{$n> C e^{2\beta}\eta^{-2}\log\frac{p}{\delta}$} samples, the forest approximation algorithm guarantees that $\EE_{\Th}\subseteq \EE_{\T}$ and  $\LL^{(2)}(P,\Pi_{\Th}(\Ph))<\eta$ with probability at least $1-\delta$ . 
\end{proposition}

The proof is presented  in Appendix~\ref{sec:ForestApprox} in 
the supplementary material.  \rvw{The forest approximation algorithm is trying to avoid adding incorrect edges and we then measure its performance according to $\LL^{(2)}$. Given this objective, it is natural to consider the loss
\begin{equation}\label{eq:forest-loss}
\widetilde{d}(P,Q)= \max\big\{\LL^{(2)}(P,Q), {1\!\!1}[\EE_{\mathsf F} \nsubseteq\EE_{\T}]\big\}\,,
\end{equation}
which is one if the learned forest is not a subgraph of the original tree, and otherwise is equal to $\LL^{(2)}$.

It turns out that the forest approximation algorithm is optimal (up to constant factor) for this loss, as shown in the following proposition.

\begin{proposition}\label{pro:forestLB}
Fix $\eta>0$. Let $\Psi$  be an estimator that takes $n$ samples generated from $\P_{\T}(0,\beta)$ and recovers a forest $\mathsf F$ and a distribution $Q\in\P_{\mathsf F}(0,\beta)$. Let the (asymmetric) distance $\widetilde{d}$ be defined in \eqref{eq:forest-loss}.
If $n<C\min\{p,e^{2\beta}\}/[\eta \,\mathrm{atanh}\eta)]\log p$, then  the worst-case probability of $\widetilde{d}$ loss greater than $\eta$, taken over trees $\T\in \TT$ and distributions $P\in \PP_{\T}(\al,\beta)$, is at least half for any algorithm, i.e., $$\inf_{\Psi}\sup_{\T\in \TT\atop P\in\PP_{\T}(\al,\beta)} \P\left[\widetilde{d}\big(P,\Psi(X^{\small(1:n\small)})\big) > \eta\right]>1/2\,.$$ 
\end{proposition}
}
Comparison with Theorem~\ref{t:CLLowerBound} shows that avoiding adding wrong edges (as forced by the loss function~\eqref{eq:forest-loss}) entails a degradation in performance.

\section{Outline of Proof}

\label{s:ProofSketch}

We now sketch the argument for the main result, Theorem \ref{t:CLLowerBound}, guaranteeing accurate pairwise marginals in the Chow-Liu tree.
The starting point is an application of the triangle inequality~\eqref{e:triangle}:
\begin{equation}
\label{e:TrianLoss}
\LL^{(2)}(P,\Pi_{\Tc}(\Ph))\leq \LL^{(2)}(P,\Pi_{\Tc}(P))+ \LL^{(2)}(\Pi_{\Tc}(P),\Pi_{\Tc}(\Ph))\,.
\end{equation}
The first term on the right-hand side of Equation~\eqref{e:TrianLoss} represents the error due to the difference in the structure of $\T$ and $\Tc$, so we call this first term the \textit{loss due to graph estimation error}.  Equation~\eqref{e:LL2} tells us that for each pair of nodes $u,v\in \VV\,,\,$ $\path_{\T}(u,v)$ and $\path_{\Tc}(u,v)$ must be compared. 

The second term on the right-hand side of  Equation~\eqref{e:TrianLoss} represents the propagation of error 
due to inaccuracy in estimated parameters. Recall that the estimated parameters on the Chow-Liu tree are obtained by  matching correlations to the empirical values. 

Theorem \ref{t:CLLowerBound} follows by separately bounding each term on the right-hand side of Equation~\eqref{e:TrianLoss}.

\begin{proposition}[Loss due to parameter estimation error]
	\label{p:EdgeEstError}
	Given \rvw{
	$n>C \max\{e^{2\beta},\,\eta^{-2}\} \log\frac{p}{\delta}$ }samples, with probability at least $1-\delta$ we have $\LL^{(2)}(\Pi_{\Tc}(P),\Pi_{\Tc}(\Ph)\leq\eta$.
\end{proposition}

\begin{proposition}[Loss due to graph estimation error]
	\label{p:projectedDist}
	Given $n>C' \max\{e^{2\beta},\,\eta^{-2}\} \log\frac{p}{\delta}$ samples, with probability at least $1-\delta$ we have $ \LL^{(2)}(P,\Pi_{\Tc}(P))\leq\eta$.
\end{proposition}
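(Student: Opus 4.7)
I plan to bound directly the supremum over pairs $(w,\wt)$ of the quantity in Equation~\eqref{e:LL2},
$$\bigl|\prod_{e\in\path_{\T}(w,\wt)}\mu_e \;-\; \prod_{e'\in\path_{\Tc}(w,\wt)}\mu_{e'}\bigr|\,,$$
using that $\Pi_{\Tc}(P)$ is the zero-field Ising model on $\Tc$ whose edge weights are matched to the \emph{true} correlations $\mu_{e'}$; the first product equals $\mu_{w\wt}=\E_P[X_w X_{\wt}]$ because $P$ is Markov on $\T$.

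First I would invoke Hoeffding together with a union bound over $\binom{p}{2}$ pairs to obtain, with probability at least $1-\delta$, the uniform bound $|\wmu_{ij}-\mu_{ij}|\leq \zeta$ with $\zeta = C\sqrt{\log(p/\delta)/n}$. The sample-size hypothesis forces $\zeta \leq \min\{c\eta,\,c'e^{-\beta}\}$. Then, from the max-weight spanning tree characterization of $\Tc$, for every missed edge $(u,v)\in\EE_{\T}\setminus\EE_{\Tc}$ and every $e'\in\path_{\Tc}(u,v)$ one has $|\wmu_{e'}|\geq |\wmu_{uv}|$, so $|\mu_{e'}|\geq |\mu_{uv}|-2\zeta$. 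Combining this lower bound with the trivial $|\mu_{e'}|\leq 1$ yields an ``MWST squeeze'' that controls $|\mu_{uv}-\prod_{e'\in\path_{\Tc}(u,v)}\mu_{e'}|$ by $O(\zeta)$; this mirrors the three-node calculation of Section~\ref{s:MarkovChain} where $|\mu_{12}-\mu_{13}\mu_{23}|=\mu_{12}(1-\mu_{23}^2)\leq 4\zeta$ precisely when the algorithm errs.

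Next I would invoke the combinatorial lemma of Section~\ref{s:Appendix2} to decompose the symmetric difference $\path_{\T}(w,\wt)\triangle\path_{\Tc}(w,\wt)$ into a controlled sequence of ``atomic swaps,'' each replacing one missed edge $(u,v)$ currently on $\path_{\T}(w,\wt)$ by the corresponding subpath $\path_{\Tc}(u,v)$. A hybrid/telescoping argument that applies these swaps one at a time, bounding each replacement by the MWST-squeeze estimate of Step~2 and using $|\mu_e|\leq 1$ to keep the unchanged portion of the product harmless, would then give $\LL^{(2)}(P,\Pi_{\Tc}(P))\leq O(\zeta)\leq \eta$.

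The main obstacle is the last step: controlling the \emph{cumulative} error when several atomic swaps affect the same path, especially when some correlations are close to $\pm 1$ (i.e., $\beta$ large), since small additive perturbations inside a long product can amplify. This is exactly where the $e^{2\beta}$ term of the sample complexity enters: the hypothesis $n\geq Ce^{2\beta}\log(p/\delta)$ forces $\zeta\ll e^{-\beta}$, keeping each relative perturbation small, while the combinatorial lemma must simultaneously guarantee that the number of atomic swaps influencing a single pair $(w,\wt)$ is bounded and non-cascading. Making these two ingredients interlock cleanly, so that the telescoping of Step~3 does not blow up in the presence of several strong edges, is the technical heart of the proof.
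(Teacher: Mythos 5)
Your opening moves match the paper's (the event $\Ecorr$ from Hoeffding plus a union bound, the MWST inequality $|\wmu_{e'}|\geq|\wmu_{uv}|$ for $e'\in\path_{\Tc}(u,v)$ when $(u,v)$ is missed, and the two-trees Lemma~\ref{l:edgePair}), but the two central steps have genuine gaps. First, the ``MWST squeeze'' of your Step~2 does not follow from what you cite: the per-edge bounds $|\mu_{uv}|-2\zeta\leq|\mu_{e'}|\leq 1$ only give $\prod_{e'\in\path_{\Tc}(u,v)}|\mu_{e'}|\in[(|\mu_{uv}|-2\zeta)^m,1]$ for a detour of length $m$, which is nothing like $|\mu_{uv}-\prod_{e'}\mu_{e'}|=O(\zeta)$ (consider $|\mu_{uv}|=1/2$ with several detour edges whose correlations are near $1$). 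The paper never proves such a bound for a whole detour; it proves it only for a single crossing pair $f=(u,\ut)$, $g=(v,\vt)$ supplied by Lemma~\ref{l:edgePair}, where the exact identity $\mu_g=\mu_f\mu_{\A}\mu_{\C}\mu_{\At}\mu_{\Ct}$ together with Lemma~\ref{l:corrCloseMST} yields $|\mu_f|(1-\mu_{\C}^2\mu_{\Ct}^2)\leq 8\eps$, and it handles everything else by recursion. Second, your Step~3 telescoping over $t$ atomic swaps can at best give an additive error $O(t\zeta)$, and $t$ may be of order $p$, so this route would need $n\gtrsim p^2\eta^{-2}\log p$ samples rather than $\eta^{-2}\log p$; moreover the concatenation of the detours $\path_{\Tc}(u,v)$ over all missed edges is a walk in $\Tc$ that can backtrack, not the simple path $\path_{\Tc}(w,\wt)$, so the telescoped product is not even the quantity you must compare against.

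The missing idea is precisely the one you flag as the ``technical heart'' and leave open, and it is not a refinement of your telescoping but a different mechanism. The paper conditions on $\Pcorr$, so every missed edge $f\in\EE_{\T}\setminus\EE_{\Tc}$ satisfies $|\mu_f|\leq\tau=4\eps/\sqrt{1-\tanh\beta}\leq 4\eps e^{\beta}$, and then runs an induction on $d=|\path_{\Tc}(w,\wt)|$: the crossing pair $(f,g)$ splits $\path_{\Tc}(w,\wt)$ into $g$ and two strictly shorter $\Tc$-paths $\D=\path_{\Tc}(w,v)$ and $\Dt=\path_{\Tc}(\wt,\vt)$, giving the recurrence $\Delta(d)\leq 8\eps+|\mu_f|\left(\Delta(k)+\Delta(\widetilde k)+\Delta(k)\Delta(\widetilde k)\right)$ with $k+\widetilde k+1=d$. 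The accumulated sub-errors are thus multiplied by the weak correlation $|\mu_f|\leq\tau<1$ at every level of the recursion, so the bound stays at $8\eps+4\eps e^{\beta}(2\eta+\eta^2)\leq\eta$ independently of how many structural errors the path contains. Your plan never uses the weakness of the missed edges (only that $\zeta\ll e^{-\beta}$), so it has no such damping, and the cumulative-error problem you yourself identify is fatal to the argument as stated.
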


These two propositions are proved in full detail in Sections \ref{s:paramErrpr} and \ref{s:incorrectGraph}. In the remainder of this section we define probabilistic events of interest and sketch the proofs of the propositions. 

We define three highly probable  events $\Ecorr, \Pcorr$ and $\Ncorr$ as follows.
Let $\Ecorr$ be the event that all empirical correlations are within $\epsilon$ of population values:
\begin{equation}\label{e:Ecorrdef}
\Ecorr=\left\{\max_{w,\wt\in\VV}|\mu_{w,\wt}-\widehat{\mu}_{w,\wt}|\leq \eps\right\}\,.
\end{equation}

Let
\begin{equation}\label{e:Estrong}
\tau(\eps) = \frac{4\epsilon}{\sqrt{1-\tanh \beta}} 
\quad 
\text{and}
\quad
	\EEstrong_{\T}(\eps)=\big\{(i,j)\in\EE_{\T}:|\mu_{ij}|> \tau(\eps) \big\}
\end{equation}
consist of the set of strong edges in tree $\T$. Let $\Tc$ be the Chow-Liu tree defined in Equation~\eqref{e:ChowLiuTree}. Weak edges are those that are not strong, \textit{i.e.}, $\EE_{\T}\setminus \EEstrong_{\T}(\eps)$.
Let $\Pcorr$ be the event that all strong edges in $\T$ as defined in~\eqref{e:Estrong} are recovered by the Chow-Liu tree	:
\begin{equation}\label{e:Pcorr}
\Pcorr = \left\{ \EEstrong_{\T}(\eps)\subset \EE_{\Tc} \right\}\,.
\end{equation}

Finally,
define the event
\begin{equation}\label{e:Ncorrdef}
	 \Ncorr=\left\{\LL^{(2)}(P,\Pi_{\T}(\Ph))\leq \epsilon\right\}\,.
\end{equation}
Recall that $P$ factorizes according to $\T$ and $\Ph$ is the empirical distribution which does not factorize according to any tree. This event controls \emph{cascades} of errors in correlations computed along paths in $\T$.

 Since we are interested in the situation that all three events hold, let $\mathtt{E}(\eps,\gamma):=\Ecorr\cap\Pcorr\cap \Ncorrg$.
 \rvw{ Lemmas~\ref{l:EcorrLemma},~\ref{l:PcorrLemma} and~\ref{l:NcorrLemma} 
  prove that for
 \begin{equation}\label{eq:defepsgamma}
\epsilon_0=\min\{e^{-\beta}/24, \eta/16\}
 \quad \text{and}\quad \gamma_0= \eta/3\,,
 \end{equation} 
 if
 $
 n>\max\{1152 e^{2\beta},512\eta^{-2}\}\log(6p^3/\delta):=n_0\,,
 $
 then}
 \begin{equation}\label{e:probBoundE}
\rvw{\P[\mathtt{E}(\eps_0,\gamma_0)] \geq 1-\delta\,.}
\end{equation}

\paragraph{Sketch of proof of  Proposition~\ref{p:EdgeEstError}}
The proof entails showing that on the event $\mathtt{E}(\eps_0,\gamma_0)$ for $\eps_0$ and $\gamma_0$ defined in~\eqref{eq:defepsgamma}
we have the desired inequality $\LL^{(2)}(\Pi_{\Tc}(P),\Pi_{\Tc}(\Ph))\leq\eta$.  The result then follows from \eqref{e:probBoundE}.

To bound $\LL^{(2)}(\Pi_{\Tc}(P),\Pi_{\Tc}(\Ph))$ on event $\mathtt{E}(\eps_0,\gamma_0)$, we consider parameter estimation errors along paths in $\Tc$. First, observe that on event $\Ncorrgn$ defined in~\eqref{e:Ncorrdef}, the end-to-end error for each path in $\EE_{\Tc}\cap\EE_{\T}$ is bounded by $\gamma_0$. 
Next, we study parameter estimation error in paths containing (falsely added) edges in $\EE_{\Tc}\setminus\EE_{\T}$. 

For any pair of nodes $w,\wt$,  denote by $t = |\path_{\Tc}(w,\wt)\setminus \EE_{\T}|$ the number of falsely added edges in the path connecting them in $\Tc$. As discussed in the proof, these edges correspond to missed edges in $\T$ and thus $\Pcorrn$ guarantees that these edges are weak (as defined after~\eqref{e:Estrong}). These $t$ weak edges break up the $\path_{\Tc}(w,\wt)$ into at most $t+1$ contiguous segments $\F_0,\F_1,\cdots,\F_t$, each entirely within $\EE_{\Tc}\cap\EE_{\T}$. 

On event $\Ncorrgn$ the error on each segment $\F_i$, is bounded by $\gamma_0$, but now there are $t+1$ such segments and the errors may add up. This effect is counterbalanced by the fact that the falsely added edges are weak and hence scale down the error in a multiplicative fashion. 


\paragraph{Sketch of proof of  Proposition~\ref{p:projectedDist}} We want to show that $ \LL^{(2)}(P,\Pi_{\Tc}(P))\leq\eta$ on the event $\Ecorrn\cap\Pcorrn\supseteq \mathtt{E}(\eps_0,\gamma_0)$ for $\epsilon_0$ defined in~\eqref{eq:defepsgamma}. 


The proof of the proposition sets up a careful induction on the distance between nodes (computed in $\Tc$) for which we wish to bound the error in correlation. One of the ingredients is Lemma~\ref{l:edgePair}, a combinatorial statement relating trees $\T$ and $\Tc$.
The lemma states that for any two spanning trees on $p$ nodes, and for any two nodes $w,\wt\in [p]$, there exists at least one pair of edges $f\in\path_{\T}(w,\wt)$ and $g\in\path_{\Tc}(w,\wt)$ satisfying a collection of properties illustrated in Figure~\ref{f:proof} (and specified in the lemma).  

 A consequence is that the true correlation across $g$ according to $P$ can be expressed in terms of the correlation on $f$ as $\mu_{g}=\mu_f \mu_{\A}\mu_{\C}\mu_{\At}\mu_{\Ct}$, hence $|\mu_{g}|\leq |\mu_f|$.  
 But $|\wmu_{g}|\geq |\wmu_f|$, since the Chow-Liu algorithm chose $g$ in $\Tc$ instead of $f$. 
Tight control on the relationship between $|\wmu_{g}|$ and $|\wmu_f|$ yields a recurrence for $\Delta(d)$, where $\Delta(d)$  is an upper bound on the error due to graph estimation error for any pair of nodes $w,\wt$ with $|\path_{\Tc}(w,\wt)|=d$.

\section{Proof of main result}
\label{s:ProofProp}
As observed in Section \ref{s:ProofSketch}, \rvw{Theorem \ref{t:CLLowerBound}} is a direct consequence of Propositions \ref{p:EdgeEstError} and \ref{p:projectedDist}, which we prove in Sections \ref{s:paramErrpr} and \ref{s:incorrectGraph}. We prove Theorem~\ref{t:inferenceUpper} in Section~\ref{s:pairwiseUpper}.

\subsection{Loss due to parameter estimation (proof of Proposition~\ref{p:EdgeEstError})}
\label{s:paramErrpr}
We will prove that on the event $\mathtt{E}(\eps_0,\gamma_0)$ for $\eps_0$ and $\gamma_0$ defined in~\eqref{eq:defepsgamma}
the desired inequality $\LL^{(2)}(\Pi_{\Tc}(P),\Pi_{\Tc}(\Ph))\leq\eta$ holds. 
Equation~\eqref{e:probBoundE} gives the result. 
 
Let $\tau(\eps_{0})$ (defined in~\eqref{e:Estrong}) be the threshold to define $\EEstrong_{\T}(\epsilon_0)$, the set of strong edges in $\T$. For any pair of nodes $w,\wt$, consider $\path_{\Tc}(w,\wt)$. Let $0\leq t <p$ be the number of weak edges $e_1,\cdots,e_t\in\path_{\Tc}(w,\wt)$ such that $|\mu_{e_i}|\leq\tau(\eps_0)$.
There are at most $t+1$ contiguous sub-paths in $\path_{\Tc}(w,\wt)$ consisting of strong edges. We call these segments $\F_0,\F_1,\dots, \F_t$. If two weak edges $e_i$ and $e_{i+1}$ are adjacent in $\path_{\Tc}(w,\wt),$ then $\F_i=\varnothing$, in which case we define $\mu_{\F_{i}}=\wmu_{\F_i}=1$. By definition of $\F_i$, all edges $f\in\F_i$ are strong.

According to~\eqref{e:Pcorr}, under the event $\Pcorrn$ all strong edges in $\T$ are recovered in $\Tc$. Thus, $\F_i \subseteq \EE_{\T}$ is a path not only in   $\Tc$ but also in $\T$, which guarantees  $|\wmu_{\F_i}-\mu_{\F_i}|\leq \gamma_0$ under the event $\Ncorrgn$.
 
Note that if $t=0$ then $\path_{\Tc}(w,\wt)$ consists of all strong edges for which Lemma \ref{l:NcorrLemma} gives the desired bound.
For $t\geq 1$ we have:
\begin{eqnarray*}
\lefteqn{\left|\prod_{e\in \path_{\Tc}(w,\wt)} \wmu_e- \prod_{e\in \path_\Tc(w,\wt)} \mu_e\right| }
\\
&& 
\stackrel{(a)}{=}
 \left| \wmu_{\F_{0}}\prod_{i=1}^t \wmu_{\F_i} \wmu_{e_i}-\mu_{\F_{0}}\prod_{i=1}^t \mu_{\F_i} \mu_{e_i}\right|
 \\
&& 
\stackrel{(b)}{\leq} |\wmu_{\F_{0}}-\mu_{\F_{0}}|\prod_{j=1}^{t}|\mu_{\F_j} \mu_{e_j}|
 + 
\sum_{i=1}^{t}|\wmu_{\F_i} \wmu_{e_i}-\mu_{\F_i} \mu_{e_i}|\cdot|\wmu_{\F_0}|\prod_{j=1}^{i-1}|\wmu_{\F_j} \widehat{\mu}_{e_j}|
\prod_{k=i+1}^{t}|\mu_{\F_k} \mu_{e_k}| 
\\
&&
\stackrel{(c)}{\leq}
 \gamma_0 \big[\tau(\eps_0)\big]^t +
\big(\tau(\eps_0)+\epsilon_0\big)^{t-1}
 \sum_{i=1}^{t}|\wmu_{\F_i} \wmu_{e_i}-\mu_{\F_i} \mu_{e_i}|
\\
&&
\stackrel{(d)}{\leq}
 \gamma_0 \big[\tau(\eps_0)\big]^t +
\big(\tau(\eps_0)+\epsilon_0\big)^{t-1}
  \left[\sum_{i=1}^{t}\left| \mu_{\F_i} (\wmu_{e_i}-\mu_{e_i})|
  +|\wmu_{e_i}(\wmu_{\F_i} -\mu_{\F_i})\right|\right]
\\
&&
\stackrel{(e)}{\leq} 
\big(\tau(\eps_0)+\epsilon_0\big)^{t-1}
 (2t+1) \max\{\gamma_0,\epsilon_0\}
\stackrel{(f)}{\leq} \,\frac{2t+1}{4^{t-1}} \,\frac\eta 3
\,
 \stackrel{(g)}{\leq}  
 \eta\,.
\end{eqnarray*}
In (a) we use $\path_{\Tc}(w,\wt) = \{\F_0,e_{1},\cdots,\F_t,e_t, \F_{t}\}$. 
(b) uses the bound $|\prod_{i=1}^t a_i - \prod_{i=1}^t b_i|\leq \sum_{i=1}^{t}|a_i-b_i|\prod_{j=1}^{i-1}|a_j| \prod_{k=i+1}^t |b_k| $ obtained via telescoping sum and triangle inequality.
In (c), we use $|\mu_{\F_i}|, |\wmu_{\F_i}|\leq 1$, $|\mu_{e_i}|\leq \tau(\eps_0)$, $|\wmu_{e_i}|\leq \tau(\eps_0)+\epsilon_0$ on $\Ecorrn$ and $|\wmu_{\F_{0}}-\mu_{\F_0}|\leq \gamma_0$ on $\Ncorrgn$. (d) uses triangle inequality. In (e), we use $|\wmu_{\F_{i}}-\mu_{\F_i}|\leq \gamma_0$ on the event $\Ncorrgn$ and $|\wmu_{e_{i}}-\mu_{e_i}|\leq \epsilon_0$ on the event $\Ecorrn$. 
(f) is true since $\gamma_0, \epsilon_0\leq \eta/3$ (as in~\eqref{eq:defepsgamma}). Also,  $1-\tanh\beta\geq e^{-2\beta}$ and the definition of $\tau(\eps_0)$ in~\eqref{e:Estrong} gives $\tau(\eps_0)\leq 4\eps_0 e^{\beta}$. Hence, $\tau(\eps_0)+\epsilon_0\leq 5\eps_0 e^{\beta}\leq 1/4$  where the last inequality uses $\eps_0 \leq e^{-\beta}/20$ (according to~\eqref{eq:defepsgamma}). (g) holds for all $t\geq 1$.  \qed

\subsection{Loss due to graph estimation error (proof of Proposition~\ref{p:projectedDist})}
\label{s:incorrectGraph}

The following lemma, proved in Section~\ref{sec:lemmaProofs} for completeness, is a well-known consequence of a  spanning tree being max-weight~\cite{cormen2009introduction}. 
We use Lemma~\ref{l:greed} to bound the loss due to graph estimation error  by the Chow-Liu algorithm.
\begin{lemma}[Error characterization in the Chow-Liu tree] \label{l:greed}
Consider the complete graph on $p$ nodes with weights $|\wmu_{ij}|$ on each edge $(i,j)$. Let $\Tc$ be the maximum weight spanning tree of this graph. If edge $(u,\ut)\notin \EE_{\Tc}$, then $|\wmu_{u\ut}|\leq |\wmu_{ij}|$ for all $(i,j)\in\path_{\Tc}(u,\ut)\,.$
\end{lemma}

Recall that $P$ factorizes according to $\T$. 
The error in correlation between any two variables $X_w$ and $X_{\wt}$ computed along the $\path_{\T}(w,\wt)$ as compared to  $\path_{\Tc}(w,\wt)$ is
\begin{align*}
\err_{P,\Tc}(w,\wt) 
& =
 \frac{1}{2}\cdot \left|\Ex_P X_w X_{\wt} - \Ex_{\Pi_{\Tc}(P)}X_w X_{\wt}\right|
 \\
& = 
 \frac{1}{2}\cdot  \bigg|\prod_{e\in\path_{\T}(w,\wt)}\mu_e - \prod_{e\in\path_{\Tc}(w,\wt)}\mu_e \bigg| 
 \,.
\end{align*}
Our goal is to bound
$
\LL^{(2)}(P,\Pi_{\Tc}(P)) 
=
\max_{w,{\wt}\in\VV}\err_{P,\Tc}(w,\wt)\,.$
We will prove that 
on the event 
$\Ecorrn\cap\Pcorrn\supseteq \mathtt{E}(\eps_0,\gamma_0)$
 for 
 $\epsilon_0$
  defined in~\eqref{eq:defepsgamma}, 
  $\LL^{(2)}(P,\Pi_{\Tc}(P))<\eta$ 
  holds. 
The result then follows from~\eqref{e:probBoundE}.

The core of the argument uses induction to derive a recurrence on the maximum of $\err_{P,\Tc}(w,\wt)$ in terms of the distance (as measured in $\Tc$) between the nodes $w,{\wt}$. 
Define 
$$
\Delta(d)\triangleq\max_{\substack{w,{\wt}\in\VV\\|\path_{\Tc}(w,{\wt})|=d }}\err_{P,\Tc}(w,\wt) \,.
$$
 For nodes at distance one in $\Tc$, i.e. $|\path_{\Tc}(w,{\wt})|=1$, it follows that 
 $\err_{P,\Tc}(w,\wt)=0$ from the definition of the projected distribution $\Pi_{\Tc}(P)$ (matching pairwise marginals on edges) in Section~\ref{sec:infprojdef}. 
 Hence, $\Delta(1) = 0 \leq \eta$. 
 We define $\Delta(0)=0$. 
 For $d>1$, we bound $\Delta(d)$ in terms of $\Delta(k)$ for $k<d$:
  we will show that  on the event $\Ecorrn\cap\Pcorrn$ 
  with 
  $\epsilon_0$ defined in~\eqref{eq:defepsgamma},
 if 
 $\Delta(k) \leq \eta$ 
 for all 
 $k<d$, then 
 $\Delta(d)\leq \eta$ 
 which gives the result.
  
	Note that if $\path_{\Tc}(w,\wt)=\path_\T(w,\wt)$, then $\err_{P,\Tc}(w,\wt)=0$ (again because correlations are matched on edges). 
Thus, we assume $\path_{\Tc}(w,{\wt})\neq \path_{\T}(w,{\wt})$. Lemma \ref{l:edgePair} shows the existence of  a pair of edges $f=(u,\ut)\in\EE_{\T}\setminus\EE_{\Tc}$ and $g=(v,\vt) \in \EE_{\Tc}\setminus\EE_{\T}$ such that (See Figure \ref{f:proof}):
\begin{itemize}
\item $f\in\path_{\T}(w,\wt) \cap \path_{\T}(v,\vt)$ and $g\in\path_{\Tc}(w,\wt) \cap \path_{\Tc}(u,\ut)$.
\item $f\notin\path_{\Tc}(w,\wt)$ and $g\notin\path_{\T}(w,\wt)$.
\item $u,v\in \subT{\T}{f}{w}$ and $\ut,\vt\in \subT{\T}{f}{\wt}$.
\end{itemize}
Here $\subT{\T}{f}{w}=\{i\in\VV; f\notin\path_{\T}(w,i)\}$ is the set of nodes connected to $w$ in $\T$ after removing edge $f$ (see Figure~\ref{f:proof}).
	\begin{figure}[t] 
		\begin{center}
		\includegraphics[width=3.5in]{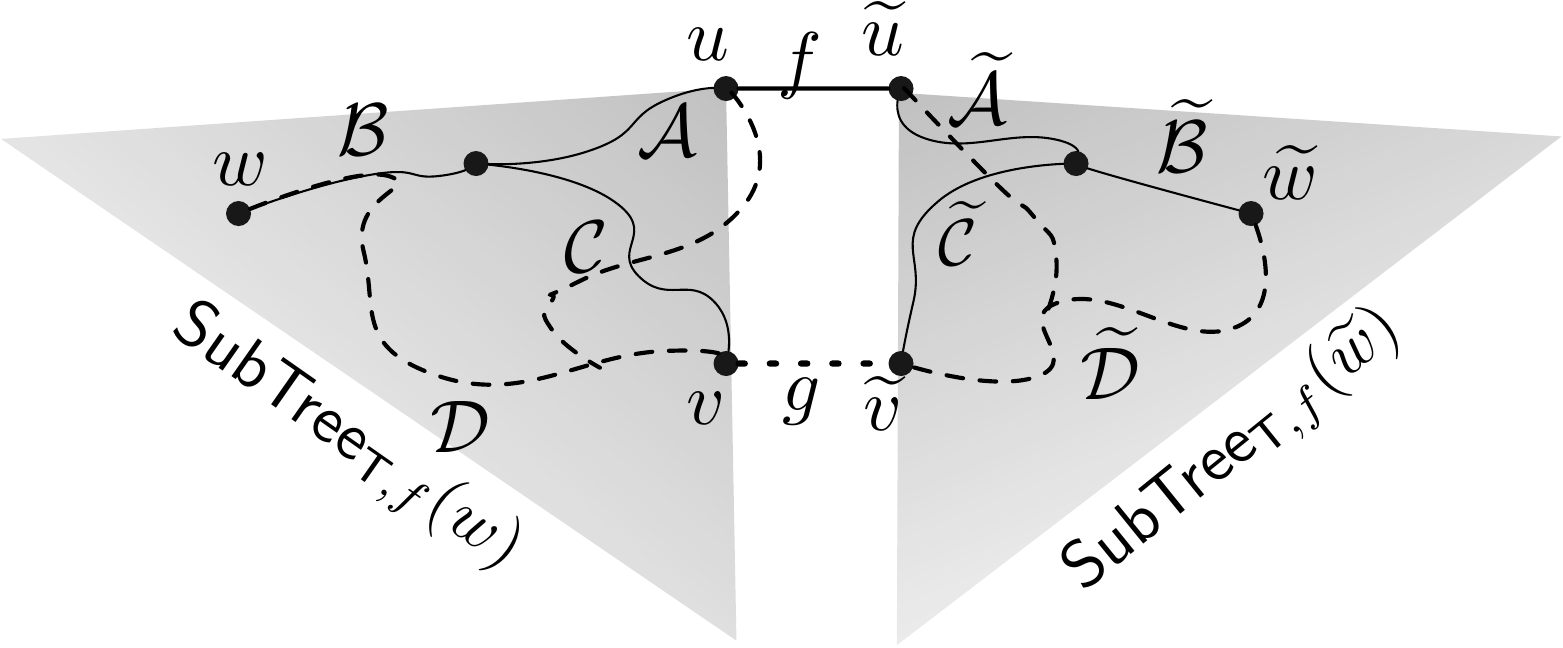}
		\caption{Schematic for the proof of Proposition~\ref{p:projectedDist}. The solid lines represent paths in $\T$ and the dashed lines represent paths in $\Tc$. 
		The sets of edges $\D$ and $\Dt$ may overlap with $\A\cup\B$ and $\At\cup\Bt$.}\label{f:proof}
		\end{center}
	\end{figure}
We define several sub-paths:
	\begin{align*}
	\A  = \path_{\T}(u,w) \cap \path_{\T}(u,v)\,, \qquad&\qquad
	\B  = \path_{\T}(u,w) \setminus \path_{\T}(u,v)\,,  \\
	\C  = \path_{\T}(u,v) \setminus \path_{\T}(u,w)\,,\qquad& \qquad
	\D  = \path_{\Tc}(w,v)\,.
	\end{align*}
	Recall that for set of edges $\SS$, we defined $\mu_{\SS}=\prod_{e\in\SS}{\mu_e}$. 
	Since $\path_{\T}(w,v) = \B\cup \C$ and $\B\cap\C=\varnothing$, we have $\mu_{w,v}=\mu_{\B} \mu_{\C}$.
Similarly, in $\subT{\T}{f}{\wt}$ we define
	\begin{align*}
	\At  = \path_{\T}(\ut,\wt) \cap \path_{\T}(\ut,\vt)\,, \qquad &\qquad
	\Bt  = \path_{\T}(\ut,\wt) \setminus \path_{\T}(\ut,\vt)\,,\\
	\Ct  = \path_{\T}(\ut,\vt) \setminus \path_{\T}(\ut,\wt)\,, \qquad & \qquad
	\Dt  = \path_{\Tc}(\wt,\vt)\,.
	\end{align*}
	The sets are defined so that 
	$\path_{\T}(v,\vt)= \C \cup \A \cup \{f\} \cup \Ct \cup \At$ 
	 where $f=(u,\ut)$ and 
	 $g=(v,\vt)\in\EE_{\Tc}$. 
	 Thus, $\mu_g = \mu_f \mu_{\A} \mu_{\C} \mu_{\At} \mu_{\Ct}$. 
	 Since $\path_{\T}(w,{\wt})= \A \cup \B \cup \{f\} \cup \At \cup \Bt$ and $\path_{\Tc}(w,{\wt})= \D \cup \{g\} \cup \Dt$, our goal amounts to finding an upper bound for the quantity 
	 $ |\mu_{\D} \mu_{g} \mu_{\Dt}-\mu_{\A}\mu_{\B}\mu_{f}\mu_{\At}\mu_{\Bt}|$.

Lemma \ref{l:greed} applied to $f=(u,\ut)\notin\EE_{\Tc}$ and 
$g=(v,\vt)\in\path_{\Tc}(u,\ut)$ 
gives $|\wmu_{f}|\leq |\wmu_g|$.
 Also, $f\in\path_{\T}(v,\vt),$ hence $|\mu_{g}|\leq |\mu_{f}|$. 
 On the event $\Ecorrn$,
  $|\mu_f|-2\epsilon_0 \leq |\wmu_{f}|\leq |\wmu_{g}|\leq |\mu_g|+2\epsilon_0\leq |\mu_f|+2\epsilon_0$ which gives $|\mu_f|-4\epsilon_0 \leq | \mu_f \mu_{\A} \mu_{\C} \mu_{\At} \mu_{\Ct}| \leq |\mu_f|$. 
  \rvw{ Also, $|\mu_{\A} \mu_{\C} \mu_{\At} \mu_{\Ct}|\leq |\mu_{\C} \mu_{\Ct}|\leq 1$.} Thus,
\begin{align}
\label{eq:edgeRecoveryError}
|\mu_f| \left(1- \mu^2_{\C} \mu^2_{\Ct}\right) &\leq 2|\mu_f| \left(1-| \mu_{\C} \mu_{\Ct}|\right) \leq 2|\mu_f| \left(1-| \mu_{\A} \mu_{\C} \mu_{\At} \mu_{\Ct}|\right) \leq 8 \epsilon_0\,.
\end{align}
Since $f\in\EE_{\T}\setminus \EE_{\Tc}$, under the event $\Pcorrn$,  $f$ cannot be a strong edge as defined in~\eqref{e:Estrong}. 
It follows that $|\mu_f|\leq \tau(\eps_0)$ for $\tau(\eps_0)$ defined in~\eqref{e:Estrong}.

	Let $k=|\D|=|\path_{\Tc}(w,v)| $ and $\widetilde{k}=|\Dt|=|\path_{\Tc}(\wt,\vt)| $, so $d=k+\widetilde{k}+1$. 
By definition of $\Delta(\cdot )$, 
	\begin{eqnarray} \label{e:tempIneq}
	\err_{P,\Tc}(w,v) & = &|\mu_{\D} - \mu_{\B} \mu_{\C}|\leq \Delta(k)\,, \\
	\err_{P,\Tc}(\wt,\vt) & = & |\mu_{\Dt} - \mu_{\Bt} \mu_{\Ct}| \leq \Delta(\widetilde{k})\,.\notag
	\end{eqnarray}
	We now prove  $\err_{P,\Tc}(w,\wt)\leq \eta$ assuming inductively  $\err_{P,\Tc}(i,j)\leq \eta$ for all pairs $i,j$ such that $\mathrm{dist}_{\Tc}(i,j)+1\leq d = \mathrm{dist}_{\Tc}(w,\wt)$ (where $\mathrm{dist}_{\Tc}(w,\wt)$ denotes the graph distance in $\Tc$).
	 Using $\mu_g =   \mu_{\A} \mu_{\C}\mu_f \mu_{\At} \mu_{\Ct}$,
	\begin{align*}
	\err_{P,\Tc}(w,\wt) 
	&= |\mu_{\D} \mu_{g} \mu_{\Dt}-\mu_{\A}\mu_{\B}\mu_{f}\mu_{\At}\mu_{\Bt}|
	 \\
	&
	 =
	 |\mu_{\D}  \mu_{\A} \mu_{\C}\mu_f \mu_{\At} \mu_{\Ct}
	\mu_{\Dt}-\mu_{\A}\mu_{\B}\mu_{f}\mu_{\At}\mu_{\Bt}|  
	\\
	&
	=
	  |\mu_{\A}\mu_f \mu_{\At} | 
\cdot
 |\mu_{\C}  \mu_{\Ct} (\mu_{\D}-\mu_{\B} \mu_{\C}+\mu_{\B} \mu_{\C})
	( \mu_{\Dt}- \mu_{\Bt} \mu_{\Ct}+ \mu_{\Bt} \mu_{\Ct}) -\mu_{\B}\mu_{\Bt}|
	\\
	&
	 \leq 
	 |\mu_{\A}\mu_f \mu_{\At}|
	 \cdot 
	 \left[|\mu_{\C}  \mu_{\Ct} \mu_{\B} \mu_{\C}
	\mu_{\Bt} \mu_{\Ct}-\mu_{\B}\mu_{\Bt}|
	+|\mu_{\C}  \mu_{\Ct} ( \mu_{\D}- \mu_{\B} \mu_{\C})( \mu_{\D}- \mu_{\B} \mu_{\C})| 
\right. 
\\
	&
	 \qquad\qquad\qquad
	+ 
	\left.|\mu_{\C}  \mu_{\Ct} \mu_{\Bt} \mu_{\Ct}( \mu_{\D}- \mu_{\B} \mu_{\C})|  
	+	 |\mu_{\C}  \mu_{\Ct} \mu_{\B} \mu_{\C}( \mu_{\Dt}- \mu_{\Bt} \mu_{\Ct})|   \right]
	\\
	&
	\stackrel{(a)}{\leq} 
	|\mu_f \mu_{\A} \mu_{\At} \mu_{\B} \mu_{\Bt}||\mu^2_{\C}\mu^2_{\Ct}-1|
  +
   |\mu_f|\left(\Delta(k)\Delta(\widetilde{k})+\Delta(k)+\Delta(\widetilde{k})\right)
   \\
	&
	\stackrel{(b)}{\leq} 
	8\epsilon_0 
	+
	\tau(\eps_0) \left(\Delta(k)+\Delta(\widetilde{k})+\Delta(k)\Delta(\widetilde{k})\right)
	\stackrel{(c)}{\leq}
	 8\eps_0
	  + 4\eps_0 e^{\beta} (2\eta + \eta^2) \leq \eta  \,.
	\end{align*}
Inequality (a) follows from~\eqref{e:tempIneq}. 
(b) uses Equation~\eqref{eq:edgeRecoveryError} and $|\mu_f|\leq \tau(\eps_0)\leq 4\eps_0 e^{\beta}$. 
We showed that $\Delta(1) = 0$. 
In (c) we use the inductive assumption $\Delta(k)\leq \eta$ for all $k<d$ and the assumption $\epsilon_0$ defined in~\eqref{eq:defepsgamma}.
 Since $w$ and $\wt$ were arbitrary, this proves $\Delta(d) \leq \eta$, and moreover this holds for all $d$. \qed

\subsection{
Necessary samples for accurate pairwise marginals (proof of Theorem \ref{t:inferenceUpper})}
\label{s:pairwiseUpper}

We construct a family of trees that are difficult to distinguish from one another.
Applying the version of Fano's inequality below in Lemma~\ref{e:Fano}, gives a lower bound on the error probability. The bound on the sample complexity is in terms of the KL-divergence between pairs of points in the parameter space. 
The symmetrized KL-divergence between two zero-field Ising models with parameters $\theta$ and $\theta'$ has the convenient form
\begin{equation}
\label{e:KLIsing}
J(\theta\|\theta') \triangleq D(\theta\|\theta') + D(\theta'\|\theta) = \sum_{i<j} (\theta_{ij} - \theta'_{ij})(\mu_{ij} - \mu'_{ij})\,.
\end{equation}
Here $\mu_{ij}$ and $\mu'_{ij}$ are the pairwise correlations between nodes $i$ and $j$ computed according to $\theta$ and $\theta'$, respectively.

\begin{lemma}[Fano's inequality, \rvw{Corollary 2.6 in  \cite{tsybakov2008introduction}}]\label{e:Fano}
	Assume that $M\geq 2$ and that $\Theta$ is a family of models $\theta^{0}, \theta^{1}, \dots, \theta^{M}$. Let $Q_{\theta^j}$ denote the probability law of the observation $X$ under model $\theta^j$. Let $\Phi: \{-1,+1\}^{p\times n}\to \{0,1,\cdots,M\}$ denote an estimator using 
 $n$ i.i.d. samples $X^{(1:n)}$.  If 
	\begin{equation}
	\label{e:KLbound}
n<(1-\delta) \frac{\log M}{\frac1{M+1}\sum_{j=1}^M J(Q_{\theta^{j}}\|Q_{\theta^{0}})}\,,
	\end{equation} 
	then the probability of error of any algorithm  is bounded as:
	\[\underset{{\Phi}}\inf\,\max_{0\leq j \leq M} Q_{\theta^j}\big[\Phi\big(X^{(1:n)}\big)\neq j\big]\geq \delta - \frac{1}{\log M}\,.\]
	\end{lemma}
	
\rvw{The following corollary is a restatement of Equation (2.9) in \cite{tsybakov2008introduction} using the above lemma and tailored to our setup.

\begin{corollary}\label{cor:Fano-loss}
Let $d$ be a distance (i.e., a metric). Suppose there are $M$ different parameter vectors $\theta^0,\cdots,\theta^{M}$ such that $d({\theta^k},\theta^j)\geq 2\eta$ for all $j\neq k$. If $n$ satisfies~\eqref{e:KLbound}, then any estimator $\Psi:X^{(1:n)}\to \Theta$ mapping $n$ samples to a set of parameters associated with a tree $\T$ and a distribution on $P\in\mathcal{P}_{\T}$ incurs a loss greater than $\eta$ with probability at least $1/2$:
\[\inf_{\Psi}\sup_{\T\in \TT\atop P\in\PP_{\T}(\al,\beta)} \P\Big[d\big(\Psi(X^{\tiny(1:n\tiny)}),\theta\big)\geq \eta\Big]\geq \frac{1}{2}\,.\]
\end{corollary}
}

\begin{proof}[Proof of Theorem~\ref{t:inferenceUpper}]
\rvw{We consider a fixed tree structure given by a path, or in other words a Markov chain $X_1- X_2- \cdots -X_p$. We choose $M$ different parameter vectors $\theta^{m}$, $0\leq m \leq M-1$, for $M=p$. 

Let $\theta^{0}_{i,i+1}=\al$ for $i = 1,\cdots,p-1$. In the $m$-th model we have $\theta^{m}_{m,m+1}= \mathrm{atanh}(\tanh \al + 2\eta)$ and the remaining edge weights $\theta^{m}_{i,i+1}=\al$ for $i\neq m$. For $m'\neq m\,,$
\begin{align*}
\max_{i,j} \big|\Ex_{\theta^{m'}}[X_i X_j]-\Ex_{\theta^{m}}[X_i X_j]\big|  \geq 2\eta\,.
\end{align*}
Also, using~\eqref{e:KLIsing} \[J(\theta^{m} \| \theta^{m'}) \leq 2\eta\big[\mathrm{atanh}(\tanh \al + 2\eta)-\al\big] \leq 2\eta  \,\frac{2\eta}{1-\big[\tanh\al+2\eta\big]^2}, \]
where we used $\frac{d}{dx}\mathrm{atanh}(x)=\frac{1}{1-x^2}$ to get the last inequality. 
Fano's inequality  (Corollary \ref{cor:Fano-loss}) with the distance $\LL^{(2)}$ gives the bound. }
\end{proof}

\section{Sample complexities of structure learning and forest approximation}
\label{s:ExactRecovery}
\subsection{Samples necessary for structure learning}
\label{s:structLowerPf}
\begin{proof}
[proof of Theorem~\ref{t:structLower}]
Suppose that $p$ is odd (for simplicity) and let the graph $\T_0$ be a path with associated parameters $\theta^{0}$ given by $\theta^{0}_{i,i+1}=\alpha$ for odd values of $i$ and $\theta^{0}_{i,i+1}=\beta$ for even values of $i$. For each odd value of $m\leq p-2$ we let $\theta^{m}$ be equal to $\theta^{0}$ everywhere except $\theta^{m}_{m,m+1}=0$ and \rvw{$\theta^{m}_{m,m+2}=\mathrm{atanh}(\tanh(\al) \tanh(\beta))$.} There are $(p+1)/2$ models in total (including $\theta^0$). A small calculation using~\eqref{e:KLIsing} leads to
\rvw{\begin{align*}
J(\theta^m\|\theta^0)& = \al \tanh \al \big[1 - (\tanh\beta)^2\big] 
\leq 4 \al^2 e^{-2\beta}\,.
\end{align*}}
Here we used $\tanh \alpha \leq \alpha$ and $1-(\tanh \beta)^2\leq 4 e^{-2\beta}$ for $\alpha,\beta \geq 0$. Plugging the last display into Fano's inequality (Lemma~\ref{e:Fano}) completes the proof.\qedhere
\end{proof}

\subsection{Samples sufficient for structure learning (proof of Theorem~\ref{t:structUpper})}
\label{s:structUpperPf}

Consider the original tree $\T$ with parameters $\alpha\leq |\theta_{ij}|\leq\beta$ for $(i,j)\in\EE_{\T}$.
Using Definition~\ref{e:Pcorr}, the Chow-Liu algorithm recovers strong edges on the event $\Pcorr$, where edge $(i,j)$ is strong if its parameter $\theta_{ij}$ satisfies $|\tanh\theta_{ij} >\tau$. Thus, if the edge strength lower bound $\alpha$ in the original tree $\T$ satisfies $\tanh\alpha > \tau$, on event $\Pcorr$ we have $\Tc=\T$. 
Note that by Lemma~\ref{l:PcorrLemma}, the event $\Pcorr$ (defined in~\eqref{e:Pcorr}) with $\epsilon = \sqrt{2/n \log(2p^2/\delta)}$ occurs with probability at least $1-\delta$.
The bound \[n>\frac{16}{\tanh^2(\alpha)(1-\tanh \beta)}\log{\frac{2p^2}{\delta}}\] on the number of samples guarantees $\tanh\alpha > \tau$ and $\Tc=\T$ with probability at least $1-\delta$.
Using ${1-\tanh \beta}\geq e^{-2\beta}$  gives the statement of Theorem~\ref{t:structUpper}. \qed

\subsection{Lower bounding sample complexity of forest approximation algorithm, Proof of Proposition~\ref{pro:forestLB}}\label{sec:forestLBproof}
\rvw{
Since the loss function $\widetilde{d}(P,Q)$ defined in~\eqref{eq:forest-loss} is not symmetric and hence not a distance, one cannot use Corollary~\ref{cor:Fano-loss} to lower bound the sample complexity of the forest approximation algorithm.

Define a class of $M=p-1$ models $\Theta$ as follows: Let $\theta^1$ be a model in which $\theta^1_{12}=0$, $\theta^1_{2i}=\beta$ for $3\leq i \leq p$, and $\theta^1_{ij}=0$ for all other edges so that $\mu_{1i}=0$ for all $i\geq 2$ in this model. 
For  $2\leq m\leq M$, define $\theta^m$ ($m$-th model) such that $\theta^m_{12}=0$, $\theta^m_{2i}=\beta$ for $3\leq i \leq p$ and $\theta^m_{1,m+1}=\mathrm{atanh}(\eta)$.     
Using~\eqref{e:KLIsing}, 
since $\mu_{1,m+1}=\eta$ and $\mu_{1i}=\eta \tanh^2(\beta)$ for $i\geq 3$ in model $m\geq 2$, 
\begin{align}\label{e:forestKL}
\frac{1}{M}\sum_{m=1}^M J(\theta^m\|\theta^{M})
&\leq 
	\eta\, \mathrm{atanh}(\eta)\Big[ \frac{1}{M}\,
		+ \Big(1-\frac{1}{M}\Big)  8e^{-2\beta}\Big] \leq 
	16\,\eta \,\mathrm{atanh}(\eta) \max\Big\{\frac{1}{p},\, e^{-2\beta}\Big\}\,.
\end{align} 

 It will be useful to decompose an estimator into an estimator for the neighborhood of node 1, and an estimator for the rest of the model with neighborhood of node 1 fixed. 
 Let $\Phi:\{-1,+1\}^{p\times n}\to \{0,1\}^{\VV\setminus\{1\}}$ be an estimator for the neighborhood of node 1, and let $\Psi'_{\Phi}$
 be an estimator that generates a forest $\mathsf{F}$, constrained to have the same neighborhood for node $1$ that the estimator $\Phi$ recovers, and a distribution $Q\in\PP_{\mathsf F}(0,\beta)$. Any estimator $\Psi$ can be decomposed in this way to $\Phi$ and $\Psi'_{\Phi}$.
  It follows that
 \begin{align*}
\inf_{\Psi} \sup_{\T\in \TT\atop P\in\PP_{\T}(\al,\beta)} \P\left[\widetilde{d}\big(P,\Psi(X^{\small(1:n\small)})\big) \geq \eta\right]  
&
 \stackrel{(a)}\geq
  \inf_{\Phi}\inf_{\Psi'_{\Phi}} \max_{m}\P_{\theta^m}\left[\widetilde{d}\big(P,\Psi(X^{\tiny(1:n\tiny)})\big) \geq \eta\right]\\
  & 
  \stackrel{(b)}\geq    \inf_{\Phi} \max_{m}
\,  \inf_{\Psi'_{\Phi}}\,\,
\P_{\theta^m}\left[\widetilde{d}\big(P,\Psi(X^{\small(1:n\small)})\big) \geq \eta\right]\\
 & 
 \stackrel{(c)}\geq    \inf_{\Phi} \max_{m}\,\,
 \P_{\theta^m}\left[\Phi(X^{\tiny{(1:n)}}) \neq \partial_{\theta^m}(1) \right]\,.
 \end{align*}
 (a) holds since $\big\{P_{\theta^m}\big\}_{m=1}^M \subseteq \bigcup_{\T\in\TT}\PP_{\T}(0,\beta)$.
  (b) holds since we swapped the order of infimum and max operations. 
  (c) is justified as follows: If the data is generated from $P_\theta$, $\theta\in\Theta$, incorrect recovery of the neighborhood of node $1$ by the estimator implies that $\widetilde{d}\geq\eta$.
   This is because $\widetilde{d}=1$ if any extra edges are added and otherwise, if there are no extra edges and an edge incident to node~1 is missing, then $\widetilde{d}\geq \eta$ due to the true correlation $\eta$ on the edge being zero in the estimated model.
 
 Now, the final quantity in the last display can be lower bounded by a standard application of Fano's inequality (Corollary~\ref{cor:Fano-loss}) on the family $\Theta$ introduced in the beginning of the proof. The main ingredient is the average symmetric KL from~\eqref{e:forestKL} and specifying the distance. Let $\partial_{\mathsf{F}}(1)$ be the neighborhood of node~1 in forest $\mathsf{F}$. For $P$ defined on forest $\mathsf F$ and $Q$ defined on $\mathsf{F}'$, we use the distance $\widetilde{d}'(P,Q)={1\!\!1} [\partial_{\mathsf{F}}(1)\neq \partial_{\mathsf{F}'}(1)]$ (zero-one loss on neighborhood of node~1).
}

\label{s:detailsProof}
\section{Control of events $\mathtt{E}^\mathrm{corr},\mathtt{E}^\mathrm{strong}$ and $\mathtt{E}^\mathrm{cascade}$}\label{sec:lemmaProofs}
We state a standard form of Hoeffding's inequality \cite{hoeffding1963probability}
in Appendix~\ref{app:devBD} and use it here.
\begin{lemma}\label{l:EcorrLemma}
The event $\Ecorr$ defined in~\eqref{e:Ecorrdef} occurs with probability at least $1-2p^2\exp(-n\eps^2/2)$. 
\end{lemma}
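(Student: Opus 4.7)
The plan is a standard pointwise concentration plus union bound argument, which is essentially immediate from Hoeffding's inequality (Lemma~\ref{lem:Hoeffding}). First I would fix an arbitrary pair of vertices $w,\wt \in \VV$ with $w \neq \wt$ and observe that
\[
\wmu_{w,\wt} = \frac{1}{n}\sum_{l=1}^{n} X_w^{(l)} X_{\wt}^{(l)},
\]
where $\{X_w^{(l)} X_{\wt}^{(l)}\}_{l=1}^{n}$ are i.i.d.\ random variables (since the configurations $X^{(1)},\ldots,X^{(n)}$ are i.i.d.) taking values in $[-1,+1]$ with mean $\mu_{w,\wt}$. Lemma~\ref{lem:Hoeffding} then yields
\[
\Pr\bigl[|\wmu_{w,\wt} - \mu_{w,\wt}| > \eps\bigr] \le 2\exp(-n\eps^2/2).
\]

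Second, I would take a union bound over all unordered pairs $\{w,\wt\} \subseteq \VV$ (cases with $w=\wt$ are trivial since $\mu_{w,w}=\wmu_{w,w}=1$). There are at most $\binom{p}{2} \le p^2/2$ such pairs, so
\[
\Pr\bigl[(\Ecorr)^c\bigr] \le 2\binom{p}{2}\exp(-n\eps^2/2) \le p^2 \exp(-n\eps^2/2) \le 2p^2\exp(-n\eps^2/2),
\]
which is the stated bound (in fact slightly stronger). Taking complements gives $\Pr[\Ecorr] \ge 1 - 2p^2 \exp(-n\eps^2/2)$.

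There is essentially no obstacle: the only things to check are that the per-edge summands are bounded and i.i.d., and that the number of pairs is $O(p^2)$. The slack between $\binom{p}{2}$ and $p^2$ is what allows the author to absorb the factor $2$ from Hoeffding into the prefactor $2p^2$.
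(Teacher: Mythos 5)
Your proposal is correct and matches the paper's own proof exactly: Hoeffding's inequality (Lemma~\ref{lem:Hoeffding}) applied to $Z = X_w X_{\wt}$ for each pair, followed by a union bound over pairs of nodes. Your bookkeeping with $\binom{p}{2} \le p^2/2$ is slightly more explicit than the paper's, but the argument is the same.
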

\begin{proof}
 For a given pair of nodes $w,\wt$, let $Z^{(i)}=X^{(i)}_w X^{(i)}_{\wt}$ and apply Hoeffding's inequality  (Lemma~\ref{lem:Hoeffding} in Appendix~\ref{app:devBD}) to get 
 $\P\left[|\mu_{w,\wt} -  \wmu_{w,\wt} |>\epsilon\right]\leq 2\exp(-n \epsilon^2/2)$. Applying the union bound over ${p \choose 2}$ pairs $w,\wt\in\VV$ of nodes completes the proof. 
\end{proof}

\rvw{We next prove Lemma ~\ref{l:greed} for completeness.}
\begin{proof} [{Proof of Lemma \ref{l:greed}}]
 For edge $(u,\ut)\notin \EE_{\Tc}$, if there is an edge $(i,j)\in\path_{\Tc}(u,\ut)$ such that $|\wmu_{u\ut}|> |\wmu_{ij}|$, then  $\Tc$ cannot be the maximum weight spanning tree. To show that, consider the tree $\T'$ identical to $\Tc$ except $(u,\ut)\in\EE_{\T'}$ and $(i,j)\notin\EE_{\T'}$ (\textit{i.e.,} $\EE_{\T'}= (\EE_{\Tc}\setminus\{(i,j)\})\cup\{(u,\ut)\}\,.$) Note that $\T'$ is a spanning tree and observe that
 $\text{weight}(\T') \triangleq\sum_{e\in\EE_{\T'}}|\wmu_{e}| =\sum_{e\in\EE_{\Tc}}|\wmu_{e}| +|\wmu_{u\ut}|-
|\wmu_{ij}|> \text{weight}(\Tc)$.  
\end{proof}

We define a pair of random variables that will help to characterize the mistakes made by the Chow-Liu algorithm. For a given pair of nodes $v,\vt$ and edge $f=(u,\ut)\in\path_{\T}(v,\vt)$
, let
\begin{eqnarray}
Z_{f,v,\vt} & = & X_u X_{\ut} -X_v X_{\vt} = X_u X_{\ut} \,(1 -X_u X_v X_{\vt}X_{\ut})\,,\label{e:DefZBern}\\
Y_{f,v,\vt} & = & X_u X_{\ut}+X_v X_{\vt}= X_u X_{\ut} \,(1 +X_u X_v X_{\vt}X_{\ut})\,. \label{e:DefYBern}
\end{eqnarray}

\begin{lemma}
\label{l:MissingEdgeZY}
If there exists a pair of edges $f=(u,\ut)$ and $g=(v,\vt)$ such that $f\in\EE_{\T}\setminus \EE_{\Tc}$, $g\in\EE_{\Tc}\setminus \EE_{\T}$ and additionally $f\in\path_{\T}(v,\vt)$ and $g\in\path_{\Tc}(u,\ut)$, then, 
$$\left(\sum_{i=1}^{n} Z^{(i)}_{f,v,\vt}\right) \left(\sum_{i=1}^{n} Y^{(i)}_{f,v,\vt}\right)\leq 0\,. $$
\end{lemma}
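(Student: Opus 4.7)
The plan is to unwind the definitions of the random variables and reduce the product of sums to a difference of squares. Writing $f=(w,\wt)$ and $g=(u,\ut)$, the definition~\eqref{e:DefZBern} and the analogous definition for $Y$ give
\[
\tfrac{1}{n}\sum_{i=1}^{n} Z^{(i)}_{f,u,\ut}=\wmu_f-\wmu_g,\qquad \tfrac{1}{n}\sum_{i=1}^{n} Y^{(i)}_{f,u,\ut}=\wmu_f+\wmu_g.
\]
Hence the product in question equals $n^2(\wmu_f^2-\wmu_g^2)$, and strict negativity reduces to showing the strict inequality $|\wmu_f|<|\wmu_g|$.

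To establish that inequality, I would apply Lemma~\ref{l:greed} directly. One of the structural hypotheses is that $g\in\path_{\Tc}(w,\wt)$ while $f=(w,\wt)\notin\Tc$, which is exactly the setup of that lemma and immediately yields $|\wmu_f|\leq|\wmu_g|$. For the strict version, I would argue by contradiction: if $|\wmu_f|=|\wmu_g|$, then the edge swap $\T'=(\Tc\setminus\{g\})\cup\{f\}$ produces a spanning tree. The reason it is a spanning tree is the \emph{other} combinatorial hypothesis on the pair $(f,g)$: removing $g$ from $\Tc$ disconnects $\Tc$ into two components separating $w$ and $\wt$ (since $g$ lies on $\path_{\Tc}(w,\wt)$), and $f=(w,\wt)$ then reconnects them. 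The new tree $\T'$ has the same total weight as $\Tc$ under the Chow-Liu edge weighting, contradicting either the maximality or the consistent-tie-breaking choice of $\Tc$ together with the hypothesis $f\notin\Tc$. This forces $|\wmu_g|>|\wmu_f|$ strictly, so the product $n^2(\wmu_f^2-\wmu_g^2)$ is strictly negative.

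The main (and essentially only) obstacle is the handling of possible ties between empirical correlations, since empirical correlations take values in $\{-n,-n+2,\dots,n\}/n$ and equality is not a measure-zero event. I would handle this by invoking the standard convention that $\Tc$ is produced by a deterministic max-weight spanning tree algorithm with a fixed tie-breaking rule, so that only one of the two candidate trees $\Tc$ or $\T'$ can be the output; the hypotheses $f\notin\Tc$ and $g\in\Tc$ then rule out equality. The remaining observations --- the expression of the $Z$ and $Y$ sums in terms of $\wmu_f$ and $\wmu_g$, and the fact that $g\notin\T$ and $f\in\path_{\T}(u,\ut)$ are not actually needed for the present lemma but rather justify its later use in Lemma~\ref{l:MissingEdgeCharac} --- are routine. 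The substantive content is thus the combinatorial swap argument together with Lemma~\ref{l:greed}.
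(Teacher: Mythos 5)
Your main computation is exactly the paper's: writing $\tfrac1n\sum_i Z^{(i)}_{f,u,\ut}=\wmu_f-\wmu_g$ and $\tfrac1n\sum_i Y^{(i)}_{f,u,\ut}=\wmu_f+\wmu_g$, so that the product equals $n^2(\wmu_f^2-\wmu_g^2)$, and then invoking Lemma~\ref{l:greed} with $f\notin\Tc$ and $g\in\path_{\Tc}(w,\wt)$ to get $|\wmu_f|\leq|\wmu_g|$. Up to that point you and the paper coincide, and both arguments establish the \emph{non-strict} inequality $\left(\sum_i Z^{(i)}_{f,u,\ut}\right)\left(\sum_i Y^{(i)}_{f,u,\ut}\right)\leq 0$. (The paper's own proof in fact stops there, so the strict ``$<0$'' in the lemma statement is not actually delivered by the paper either; the weak version is all that is used in Lemma~\ref{l:MissingEdgeCharac}, since a sum that vanishes or has the opposite sign from its nonnegative-product means still deviates from its mean by at least the mean's magnitude.)

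The part of your proposal that does not work is the upgrade to strict inequality. Your contradiction argument assumes that if $|\wmu_f|=|\wmu_g|$ then the existence of the equal-weight tree $\T'=(\Tc\setminus\{g\})\cup\{f\}$ ``contradicts'' the tie-breaking choice of $\Tc$. It does not: a deterministic tie-breaking rule simply selects one of the several maximum-weight spanning trees, and the hypotheses $f\notin\Tc$, $g\in\Tc$ merely record which one was selected. Nothing prevents the realized samples from producing $|\wmu_f|=|\wmu_g|$ with the algorithm outputting the tree containing $g$; empirical correlations live on the lattice $\{-n,-n+2,\dots,n\}/n$, so such ties occur with positive probability. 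Hence strict inequality is genuinely false as a deterministic claim, and the correct fix is to weaken the conclusion to $\leq 0$ (which, as noted, suffices for every downstream use), not to argue ties away.
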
 
\begin{proof}
Using Lemma \ref{l:greed}, $f=(u,\ut)\notin\EE_{\Tc}$ and $g=(v,\vt)\in\path_{\Tc}(u,\ut)$ implies that $|\wmu_{g}|\geq |\wmu_{f}|$. Hence $\wmu_{g}^2\geq \wmu_{f}^2$ and 
\begin{align*}
0 & \geq \widehat{\mu}_f^2 - \widehat{\mu}_{g}^2 =  (\wmu_f-\wmu_g)(\wmu_f+\wmu_g) \\ 
& =\frac{1}{n^2}\left(\sum_{i=1}^n X^{(i)}_u X^{(i)}_{\ut}-X^{(i)}_v X^{(i)}_{\vt}\right)\left(\sum_{i=1}^n X^{(i)}_u X^{(i)}_{\ut}+X^{(i)}_v X^{(i)}_{\vt}\right)\\
& = \frac{1}{n^2}\left(\sum_{i=1}^{n} Z^{(i)}_{f,v,\vt}\right) \left(\sum_{i=1}^{n} Y^{(i)}_{f,v,\vt}\right)
\end{align*}
where in the last step we used $f\in\path_{\T}(v,\vt)$ and the definition of $Z_{f,w,\wt}$ and $Y_{f,w,\wt}$ in~\eqref{e:DefZBern} and~\eqref{e:DefYBern}.
\end{proof}
Later we will bound the probability of the event in Lemma~\ref{l:MissingEdgeZY}.
 To this end, we will derive deviation bounds on $Z_{f,v,\vt}$ and $Y_{f,v,\vt}$ in Lemma \ref{l:Zconcentration}.
We use the standard Bernstein's inequality as quoted from \cite{tsybakov2008introduction} in Appendix~\ref{app:devBD}

\begin{lemma}
\label{l:Zconcentration}
For all pairs of nodes $v,\vt\in\VV$ and edges $f=(u,\ut)\in\path_{\T}(v,\vt)$, let $\A_{f,v,\vt} = \path_{\T}(v,\vt)\setminus \{f\}$ 
 such that 
 $\mu_{v\vt}=\mu_{f} \mu_{\A_{f,v,\vt}}$.
 Given $n$ i.i.d.  samples let $Z^{(1)}_{f,v,\vt},  \cdots, Z^{(n)}_{f,v,\vt}$ be defined in~\eqref{e:DefZBern}, and $Y^{(1)}_{f,v,\vt},  \cdots, Y^{(n)}_{f,v,\vt}$ be defined in~\eqref{e:DefYBern}. Let $\epsilon=\sqrt{2/n \log(2p^2/\delta)}$. Then,  with probability at least $1-\delta$
\begin{align}
\label{eq:Zconcentration}
\bigg|\frac 1 n\sum_{i=1}^{n} Z^{(i)}_{f,v,\vt} - \mu_f(1-\mu_{\A_{f,v,\vt}})\bigg|
& \leq 
\max\left\{4 \epsilon^2, 4\epsilon \sqrt{1-\mu_{\A_{f,v,\vt}}}\right\}\,\,\,\text{and}\\
\label{eq:Yconcentration}
\bigg|\frac 1n\sum_{i=1}^{n} Y^{(i)}_{f,v,\vt} - \mu_f(1+\mu_{\A_{f,v,\vt}})\bigg| 
&\leq
 \max\left\{4 \epsilon^2,4\epsilon \sqrt{ 1+\mu_{\A_{f,v,\vt}}}\right\}\,.
\end{align}
\end{lemma}

\begin{proof}
We prove that~\eqref{eq:Zconcentration} holds with probability at least $1-\delta/2$. The proof of~\eqref{eq:Yconcentration} is analogous.

We use the abbreviation $\A$ instead of $\A_{f,v,\vt}$ and $Z$ instead of $Z_{f,v,\vt}$ in this proof.
Applying Lemma \ref{l:IndependentError}, it follows from the fact that $P$ is Markov with respect to $\T$ and $f=(u,\ut)\in\path_{\T}(v,\vt)$ that \rvw{$X_u X_{\ut}$ and $X_u X_v X_{\ut}X_{\vt}$} are independent random variables. Note that 
$P(X_u X_{\ut}=1) = (1+\mu_{f})/2$ and $P(X_u X_{\ut}=-1) = (1-\mu_{f})/2$.
Similarly, the distribution of \rvw{$X_u X_v X_{\ut} X_{\vt}$} is a function of $\mu_{\A}$. 
As a result, the random variable $Z_{f,v,\vt}\in \{-2,0,2\}$ defined in~\eqref{e:DefZBern} has the following distribution:
$$Z
= \begin{cases}
-2& \qquad \text{w.p.}\quad \frac{1-\mu_{f}}{2}\frac{1-\mu_{\A}}{2}\\
0& \qquad \text{w.p.} \quad\frac{1+\mu_{\A}}{2}\\
+2& \qquad \text{w.p.} \quad\frac{1+\mu_{f}}{2}\frac{1-\mu_{\A}}{2}\,.\\
\end{cases}$$
The first and second moments of $Z$ are $ \Ex[Z]=\mu_{f}(1-\mu_{\A})$ and $\text{Var}[Z]= (1-\mu_{\A})[2-\mu_{f}^2(1-\mu_{\A})]\leq 2(1-\mu_{\A})$.
By Bernstein's inequality (Lemma \ref{l:Bernstein} in Appendix~\ref{app:devBD}), with probability at least $1-\delta/2$ 
$$\left|\sum_{i=1}^{n} Z^{(i)} - n\Ex[Z]\right| \leq n\,\max\left\{ \frac{8}{3n}\log\frac{4}{\delta}, \sqrt{\frac{4\text{Var}[Z_{f,v,\vt}]}{n}\log\frac{4}{\delta}} \right\}\,.$$
Using a union bound, we show that for any pair of nodes $v,\vt$ and any edge $f=(u,\ut)\in\path_{\T}(v,\vt)$,
\begin{align*}
&\bigg|\sum_{i=1}^{n} Z^{(i)} 
-
 n\mu_f(1-\mu_{\A})\bigg| 
 \leq 
 n\,
 \max\left\{ \frac{8}{3n}\log\frac{4p^3}{\delta}
 ,
   \sqrt{ \frac{8(1-\mu_{\A})}{n}\log\frac{4p^3}{\delta}}
   \right\}\,.
\end{align*}
The definition of $\epsilon$  gives $\frac{8}{3n}\log\frac{4p^3}{\delta}\leq 4\epsilon^2$ and $ \sqrt{ \frac{8(1-\mu_{\A})}{n}\log\frac{4p^3}{\delta}}\leq 4\epsilon \sqrt{1-\mu_{\A}}$  which gives the lemma.
\end{proof}

Event $\Pcorr$ in~\eqref{e:Pcorr} occurs if all of the strong edges in $\T$  (defined in~\eqref{e:Estrong}) are recovered in $\Tc$. Lemma \ref{l:MissingEdgeCharac} shows that the deviation bounds for the variables $Z_{f,v,\vt}$ and  $Y_{f,v,\vt}$  stated in~\eqref{eq:Zconcentration} and~\eqref{eq:Yconcentration} imply $\Pcorr$. 

\begin{lemma}
\label{l:MissingEdgeCharac}
Under the events described in Lemma~\ref{l:Zconcentration}
, if there is an edge $f\in \EE_{\T}$ missing from the Chow-Liu tree, 
$f\notin\EE_{\Tc}$,
 then 
 $ |\mu_{f}| 
  \leq 
  \tau(\eps) 
  = 
  \frac{4\epsilon}{\sqrt{1-\tanh \beta }}$
   (\textit{i.e.}, $\Pcorr$ defined in Equation~\eqref{e:Pcorr} holds).
\end{lemma}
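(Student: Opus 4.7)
The plan is to argue by contradiction via a tree-swap: from $f=(w,\wt)\in \T \setminus \Tc$ I extract a ``witness'' edge $g\in\Tc\setminus\T$ whose presence in $\Tc$, together with the max-weight spanning property already exploited in Lemma~\ref{l:MissingEdgeZY}, forces $|\mu_f|$ to be small once the Bernstein-type concentration bounds are applied.

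First I would extract the partner edge $g=(u,\ut)$ as follows. Removing $f$ splits $\T$ into two components $\T_w\ni w$ and $\T_{\wt}\ni\wt$. Since $\Tc$ is spanning and contains both $w$ and $\wt$, the path $\path_{\Tc}(w,\wt)$ must cross between these components in at least one edge; I pick any such crossing edge $g=(u,\ut)$, say with $u\in\T_w$ and $\ut\in\T_{\wt}$. By construction $g\in\path_{\Tc}(w,\wt)$, and $f\in\path_{\T}(u,\ut)$ because $f$ is the unique bridge between $\T_w$ and $\T_{\wt}$ in $\T$. Moreover $g\notin\T$: if $g$ were in $\T$, then $\path_{\T}(u,\ut)$ would reduce to the single edge $g$, contradicting the fact that it contains $f\ne g$. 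In particular $g\ne f$ is automatic.

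Next I apply Lemma~\ref{l:MissingEdgeZY} to this pair, obtaining
\[
\Bigl(\sum_{i=1}^{n} Z^{(i)}_{f,u,\ut}\Bigr)\Bigl(\sum_{i=1}^{n} Y^{(i)}_{f,u,\ut}\Bigr)\le 0.
\]
By a global sign flip at one endpoint of $f$ I may assume $\mu_f\ge 0$; then at least one of the two empirical sums must be non-positive while its population mean $n\mu_f(1\pm\mu_\A)$ is non-negative. Setting $\A=\path_{\T}(u,\ut)\setminus\{f\}$ and using $|\A|\ge 1$ (from $g\ne f$), every edge in $\A$ satisfies $|\mu_e|\le \tanh\beta$, hence $|\mu_\A|\le \tanh\beta$ and $1\pm\mu_\A\ge 1-\tanh\beta>0$. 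Feeding the sign information into Lemmas~\ref{l:Zconcentration} and~\ref{l:Yconcentration}, whichever sum is non-positive gives
\[
n\mu_f(1\pm\mu_\A)\le \max\bigl\{16n\epsilon^2,\,4n\epsilon\sqrt{1\pm\mu_\A}\bigr\}.
\]

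I then read off the bound on $|\mu_f|$ by splitting on which branch attains the max. If the $\sqrt{\cdot}$ branch is larger, dividing yields $\mu_f\le 4\epsilon/\sqrt{1\pm\mu_\A}\le 4\epsilon/\sqrt{1-\tanh\beta}=\tau$. If the $16\epsilon^2$ branch is larger, then by definition $4\epsilon\ge\sqrt{1\pm\mu_\A}\ge\sqrt{1-\tanh\beta}$, forcing $\tau\ge 1$, so the trivial bound $|\mu_f|\le 1\le\tau$ closes the case. The only delicate point is the combinatorial extraction of $g$, where I must secure $g\ne f$ so that $|\A|\ge 1$ and $1-\mu_\A$ does not degenerate; beyond that, the proof is routine manipulation of the Bernstein-type bounds.
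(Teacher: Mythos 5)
Your proof is correct, and the analytic core---invoking Lemma~\ref{l:MissingEdgeZY} to get $\bigl(\sum_i Z^{(i)}_{f,u,\ut}\bigr)\bigl(\sum_i Y^{(i)}_{f,u,\ut}\bigr)\leq 0$, noting that both population means $n\mu_f(1\pm\mu_\A)$ share the sign of $\mu_f$, concluding that one empirical sum deviates from its mean by at least the mean's magnitude, and then splitting on which branch of the Bernstein bound is active---is exactly the paper's argument, including the final observation that the $16\epsilon^2$ branch forces $\tau\geq 1$ and hence a trivial bound. Where you genuinely diverge is in producing the partner edge $g$: the paper invokes the general two-trees Lemma~\ref{l:edgePair} applied to $(w,\wt)=f$, whereas you extract $g$ by the elementary cut argument (remove $f$ from $\T$, observe $\path_{\Tc}(w,\wt)$ must cross the resulting cut, and check $g\notin\T$ since otherwise $\path_{\T}(u,\ut)$ would be the single edge $g$ and could not contain $f$). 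Your route is lighter and fully sufficient here, because when $\path_{\T}(w,\wt)$ is a single edge the full mutual-containment conclusion of Lemma~\ref{l:edgePair} collapses to exactly what the cut argument gives; the paper's heavier lemma is only truly needed in Proposition~\ref{p:projectedDist}. A further small merit of your write-up is that you make explicit the non-degeneracy $\A=\path_{\T}(u,\ut)\setminus\{f\}\neq\varnothing$ (via $g\neq f$), which is what licenses $1\pm\mu_\A\geq 1-\tanh\beta$; the paper uses this bound without comment. The only cosmetic caveat: the ``global sign flip'' should flip all spins in one component of $\T\setminus\{f\}$ (not just one endpoint) so that $\mu_\A$ and the product $\bigl(\sum Z\bigr)\bigl(\sum Y\bigr)$ are preserved while $\mu_f$ changes sign; alternatively, just run the argument with $|\mu_f|$ directly as the paper does.
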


\begin{proof}
Applying Lemma \ref{l:edgePair} to $f=(u,\ut)$ shows that for the edge $f\in \EE_{\T}\setminus\EE_{\Tc}$, there exists an edge $g=(v,\vt)\in\EE_{\Tc}\setminus\EE_{\T}$ such that, $f\in\path_{\T}(v,\vt)$ and $g\in\path_{\Tc}(u,\ut)$ (Figure \ref{f:proof}). 
Let $Z=Z_{f,v,\vt}$ defined in~\ref{e:DefZBern} and $Y=
Y_{f,v,\vt}$ defined in ~\ref{e:DefYBern} in the scope of this proof. 
Applying Lemma \ref{l:MissingEdgeZY}, this implies that $\left(\sum_{i=1}^{n} Z^{(i)}\right) \left(\sum_{i=1}^{n} Y^{(i)}\right)\leq 0$. Note that $\Ex Z =\mu_f(1-\mu_{\A})$ and $\Ex Y =\mu_f(1+\mu_{\A})$ using the definition $\A=\A_{f,v,\vt}=\path_{\T}(v,\vt)\setminus \{f\}$. Hence $\left(\Ex Z\right)\, \left(\Ex Y\right) =\mu_f^2(1-\mu^2_{\A}) \geq 0\,. $ Thus, $\left(\sum_{i=1}^{n} Z^{(i)}\right) \left(\sum_{i=1}^{n} Y^{(i)}\right)<0$ holds only if either one of the following inequalities  holds: 
\begin{align*}
\Big|\sum_{i=1}^{n} Z^{(i)}-n\Ex Z\Big|\geq n\big|\Ex Z\big|
\qquad \text{or}
\qquad
\Big|\sum_{i=1}^{n} Y^{(i)}-n\Ex Y\Big|\geq n\big|\Ex Y\big|\,.
\end{align*}
On the events described in Lemma \ref{l:Zconcentration}
, there is an upper bound on $\big|\sum_{i=1}^{n} Z^{(i)}-n\Ex Z\big|$ and $\big|\sum_{i=1}^{n} Y^{(i)}-n\Ex Y\big|$. Hence, on these events, the property in above display holds only if either one of these inequalities holds:
\begin{align*}
|\mu_{f}(1-\mu_{\A})| & \leq \max\left\{4\epsilon^2,4\epsilon \sqrt{ 1-\mu_{\A}}\right\} \quad \text{or}\\
|\mu_{f}(1+\mu_{\A})|&  \leq \max\left\{4\epsilon^2,4\epsilon \sqrt{ 1+\mu_{\A}}\right\}\,,
\end{align*}
which is true if
\begin{align*}|\mu_{f}|&\leq \max\left\{ \frac{4\epsilon}{\sqrt{1-\mu_{\A}}},\frac{4\epsilon^2}{1-\mu_{\A}},\frac{4\epsilon}{\sqrt{1+\mu_{\A}}},\frac{4\epsilon^2}{1+\mu_{\A}}\right\} 
\leq \max\{\tau(\eps),\tau^2(\eps)\}\,,\end{align*}
where $\tau(\eps)$ is defined in~\eqref{e:Estrong}.
Note that if $\tau(\eps)\geq 1$ then the bound on $|\mu_f|\leq 1 \leq \tau(\eps)$ is trivial. If $\tau(\eps)< 1$, then $\tau^2(\eps)< \tau(\eps)$ which gives $|\mu_{f}|  \leq  \tau(\eps)$.
\end{proof}

\begin{lemma}\label{l:PcorrLemma}
With $\eps=\sqrt{2/n \log(2p^2/\delta)}$, event $\Pcorr$ defined in~\eqref{e:Pcorr} occurs with probability at least $1-\delta$.
\end{lemma}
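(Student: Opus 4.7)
The plan is to argue by contrapositive using the characterization of missing edges from Lemma~\ref{l:MissingEdgeCharac}. Recall that for $e\in\EE_\T$ the edge parameter satisfies $\tanh\theta_e = \mu_e$, so an edge is strong (i.e.\ in $\EEstrong_\T$) precisely when $|\mu_e|\geq \tau = 4\eps/\sqrt{1-\tanh\beta}$. Conversely, Lemma~\ref{l:MissingEdgeCharac} says that on the good event of Lemmas~\ref{l:Zconcentration} and~\ref{l:Yconcentration}, any edge $f\in\EE_\T\setminus\EE_{\Tc}$ must satisfy $|\mu_f|\leq \tau$. Thus on that good event no strong edge can be missing, which is exactly $\Pcorr$.

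The bulk of the work is therefore to control the probability of the intersection of the good events. With the choice $\eps=\sqrt{(2/n)\log(2p^2/\delta)}$, one can rerun the Bernstein-plus-union-bound argument used inside Lemmas~\ref{l:Zconcentration} and~\ref{l:Yconcentration} while tracking budget: there are at most $\binom{p}{2}\cdot p \leq p^3$ triples $(e,u,\ut)$ with $e\in\path_\T(u,\ut)$, so allocating $\delta/4$ to the $Z$-deviations and $\delta/4$ to the $Y$-deviations, a union bound gives that both~\eqref{eq:Zconcentration} and~\eqref{eq:Yconcentration} hold simultaneously for every such triple with probability at least $1-\delta/2$. (The factor of two slack compared to the statements of Lemmas~\ref{l:Zconcentration}--\ref{l:Yconcentration} is absorbed by the constant in the definition of $\eps$; I would match the $\delta$ normalization at this stage.)

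On this event, suppose for contradiction that some edge $f=(w,\wt)\in \EEstrong_\T$ is not in $\Tc$. By Lemma~\ref{l:edgePair} applied to the pair of endpoints of $f$, there is an edge $g=(u,\ut)\in \EE_{\Tc}\setminus \EE_\T$ with $g\in\path_{\Tc}(w,\wt)$ and $f\in\path_\T(u,\ut)$. Lemma~\ref{l:MissingEdgeZY} then forces $\bigl(\sum_i Z^{(i)}_{f,u,\ut}\bigr)\bigl(\sum_i Y^{(i)}_{f,u,\ut}\bigr)<0$, while the population means have the \emph{same} sign since $\Ex Z_{f,u,\ut}\cdot \Ex Y_{f,u,\ut}=\mu_f^2(1-\mu_\A^2)\geq 0$. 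Hence either the $Z$-sum or the $Y$-sum must deviate from its mean by more than the mean itself; plugging the deviation bounds~\eqref{eq:Zconcentration}--\eqref{eq:Yconcentration} into this inequality and simplifying yields $|\mu_f|\leq \tau$, contradicting $f\in\EEstrong_\T$. This is exactly the contrapositive step and is essentially Lemma~\ref{l:MissingEdgeCharac}, which I would cite directly.

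The only place requiring care is step two: matching the $\delta$ and $\eps$ bookkeeping so that the combined failure probability is $\leq \delta$ with the specific $\eps=\sqrt{(2/n)\log(2p^2/\delta)}$ stated. Everything else is an immediate consequence of the lemmas already proved. No additional combinatorics or concentration arguments are needed.
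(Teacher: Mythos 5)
Your proposal is correct and follows essentially the same route as the paper: the paper's own proof likewise reduces the claim to Lemma~\ref{l:MissingEdgeCharac} (any edge of $\T$ missing from $\Tc$ has $|\mu_f|\leq\tau$ on the concentration events of Lemmas~\ref{l:Zconcentration} and~\ref{l:Yconcentration}), with the $1-\delta$ probability coming from the union bound over the $O(p^3)$ triples in those lemmas. Your additional re-derivation of the contrapositive step via Lemmas~\ref{l:edgePair} and~\ref{l:MissingEdgeZY} is exactly the content of Lemma~\ref{l:MissingEdgeCharac} and can indeed simply be cited.
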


\begin{proof}
\label{l:strongRecovered}
Lemma \ref{l:MissingEdgeCharac} shows that, under the events described in Lemma~\ref{l:Zconcentration}
, the event $\Pcorr$ defined in Equation~\eqref{e:Pcorr} holds. Using Lemma \ref{l:Zconcentration}
, with probability at least $1-\delta$, all edges $e\in\T$ such that $|\mu_e|> \tau(\eps)$  are recovered by Chow-Liu algorithm $e\in\Tc$. 
\end{proof}

\begin{lemma}
\label{l:IndependentError}
Let the distribution $P(x)\in\PP(\T)$ be a zero-field Ising model on the tree $\T=(\VV,\EE)$. 
For all $e=(i,j) \in\EE$ let $Y_e=X_iX_j$. Then the random variables $\{Y_e\}_{e\in \T}$ are jointly independent.
\end{lemma}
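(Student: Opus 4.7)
The plan is to prove independence by explicitly computing the joint distribution of $Y = (Y_e)_{e \in \EE}$ and comparing it with the product of marginals. The zero-field assumption is crucial because it gives uniform singleton marginals and the symmetry $P(x) = P(-x)$.

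First, I would use the tree factorization~\eqref{eq:TreeFac} together with $P(x_i) = 1/2$ and $P(x_i,x_j) = (1 + \mu_{ij} x_i x_j)/4$ (which follows from zero-field Ising on a single edge, with $\mu_{ij} = \tanh \theta_{ij}$) to derive the compact form
\begin{equation*}
P(x) \;=\; \frac{1}{2^p} \prod_{(i,j) \in \EE} \bigl(1 + \mu_{ij}\, x_i x_j\bigr) \;=\; \frac{1}{2^p}\prod_{e \in \EE}(1 + \mu_e y_e),
\end{equation*}
where on the right $y_e = x_i x_j$ is determined by $x$.

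Next I would observe the key combinatorial fact: since $\T$ is a tree with $p-1$ edges, fixing a root $r \in \VV$ gives a bijection between configurations $x \in \{-1,+1\}^p$ and pairs $(x_r,(y_e)_{e \in \EE}) \in \{-1,+1\} \times \{-1,+1\}^{\EE}$. Indeed, once $x_r$ and every edge product are specified, each $x_v$ is recovered by multiplying edge products along the unique path from $r$ to $v$. Consequently each value $y \in \{-1,+1\}^{\EE}$ has exactly two preimages ($x$ and $-x$), and the product $\prod_e (1 + \mu_e y_e)$ depends only on $y$, so
\begin{equation*}
\Pr[Y = y] \;=\; \sum_{x : \, x_i x_j = y_{ij} \forall (i,j)\in \EE} P(x) \;=\; 2 \cdot \frac{1}{2^p}\prod_{e \in \EE}(1 + \mu_e y_e) \;=\; \prod_{e \in \EE} \frac{1 + \mu_e y_e}{2}.
\end{equation*}

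Finally, I would identify each factor $(1 + \mu_e y_e)/2$ with the marginal $\Pr[Y_e = y_e]$ (this marginal is obtained by summing the two-variable edge marginal $(1 + \mu_e x_i x_j)/4$ over the two $x_i,x_j$ pairs with prescribed product $y_e$). This yields $\Pr[Y = y] = \prod_e \Pr[Y_e = y_e]$, which is joint independence. There is no real obstacle in the argument; the only point requiring care is the bijection step, which uses that the graph is a tree (the absence of cycles is what makes the edge-product coordinates algebraically independent given the root).
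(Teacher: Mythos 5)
Your proof is correct and is exactly the argument the paper has in mind: the paper disposes of this lemma in one sentence (``follows from the factorization of $P$''), and your write-up---the product form $P(x)=2^{-p}\prod_{e}(1+\mu_e y_e)$, the two-to-one change of variables $x\mapsto(x_r,(y_e)_e)$ valid because a tree has no cycles, and the resulting factorization $\Pr[Y=y]=\prod_e\tfrac{1+\mu_e y_e}{2}$---is the standard way to make that one-liner rigorous. No gaps; the bijection step you flag as the delicate point is handled correctly.
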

This follows from the factorization of distribution $P(x)\in\PP(\T)$ in~\eqref{e:Ising}.

\rvw{
Next, we prove an upper bound on the end-to-end error on paths in the tree $\T$. Interestingly, the bound is dimension-free: the error is independent of the length of path. 
Appendix~\ref{sec:NcorrProof} contains the proof of Lemma~\ref{l:NcorrLemma}.

\begin{lemma}\label{l:NcorrLemma}
Suppose $\gamma<1$. If $n>\max\{25/\gamma^2 \log(4p^2/\delta), 108 e^{2\beta}\log(2p^3/\delta)\}$, then the event $\Ncorrg$ defined in~\eqref{e:Ncorrdef} occurs with probability at least $1-\delta$.
\end{lemma}
}

\begin{lemma}\label{l:edgePair}
	Let $\T_1$ and $\T_2$ be two spanning trees on a set of nodes $\VV$. Let $w,\wt$ be a pair of nodes  such that $\path_{\T_1}(w,\wt)\neq \path_{\T_2}(w,\wt)$. Then there exists a pair of edges $f\triangleq (u,\ut)\in \path_{\T_1}{(w,\wt)}$ and $g\triangleq (v,\vt)\in \path_{\T_2}{(w,\wt)}$ such that 
	\begin{enumerate}
		\item[(i)] 	$f\notin \path_{\T_2}{(w,\wt)}$ and $g\notin \path_{\T_1}{(w,\wt)}$
		\item[(ii)]  $f\in \path_{\T_1}{(v,\vt)}$ and $g\in \path_{\T_2}{(u,\ut)}$
	\end{enumerate}
\end{lemma}
\noindent Since $f\in\path_{\T_1}(w,\wt) \cap \path_{\T_1}{(v,\vt)}$, $w$ and $\wt$ (and respectively $v$ and $\vt$) are in different subtrees of $\T_1$ after removing edge $f$, one can label the end points of the edges $f=(u,\ut)$ and $g=(v,\vt)$ such that $u,v\in \subT{\T_1}{f}{w}$ and $\ut,\vt\in \subT{\T_1}{f}{\wt}$ (Figure \ref{f:proof}).
Lemma~\ref{l:edgePair} is proved in Appendix~\ref{s:AppendixTwoTrees}.


\section{Numerical simulations}
\label{s:NumSim}

We use numerical simulations to demonstrate the performance of the Chow-Liu algorithm in terms of both the probability of incorrect recovery of underlying structure (zero-one loss defined in~\eqref{e:zerooneLoss}) and the $\LL^{(2)}$ loss defined in~\eqref{e:LL2}.
We are specifically interested in the regime in which the number of samples is not large enough to guarantee the correct recovery of the underlying tree.

In these simulations, the generative probability distributions of the samples are factorized according to~\eqref{e:Ising} for a randomly chosen tree uniform over the set of trees on $p$ nodes. To observe the effect of upper and lower bounds on the edge parameters, for each edge $(i,j)\in\EE$,  the edge parameter $\theta_{ij}$ takes one of the values $\alpha$ or $\beta$ with equal probability.

	\begin{figure}[H]
	\centering
	\begin{subfigure}{.49\textwidth} 
	\includegraphics[width=.97\linewidth , height=1.4in]{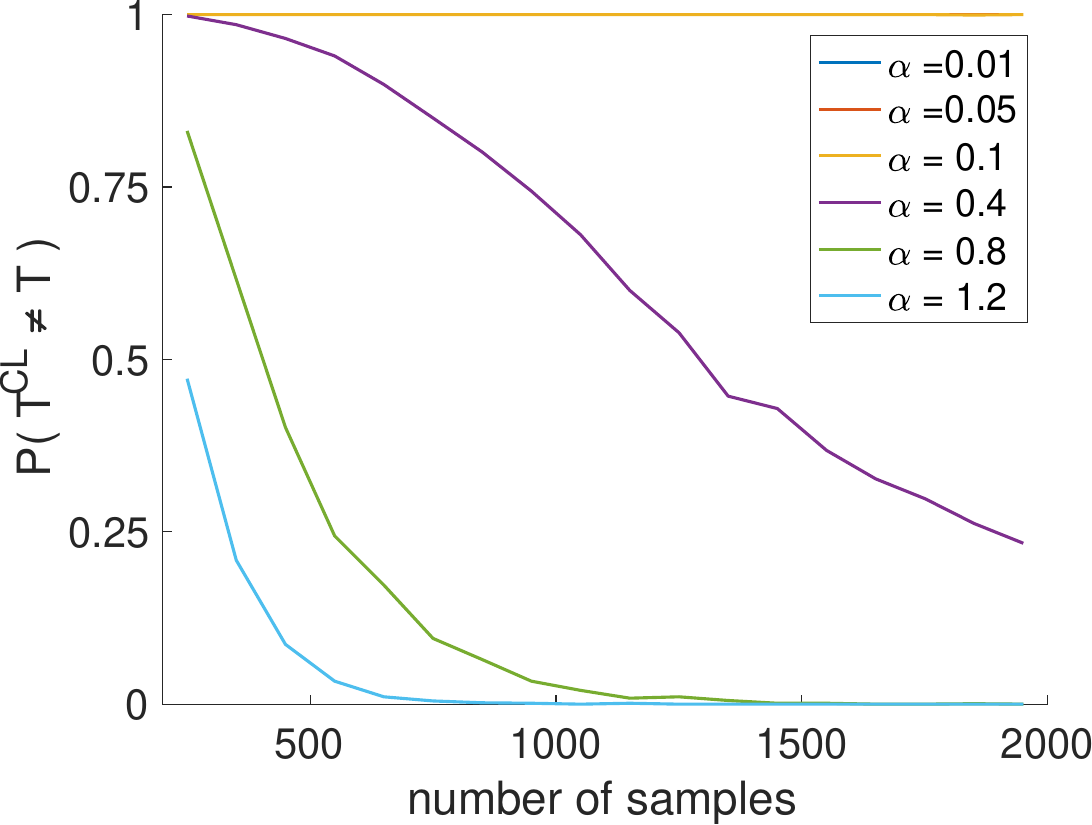}
	\caption{}
	\label{fig:alpha-pr}
	\end{subfigure}%
	~
		\begin{subfigure}{.49\textwidth}
	\includegraphics[width=.97\linewidth , height=1.4in]{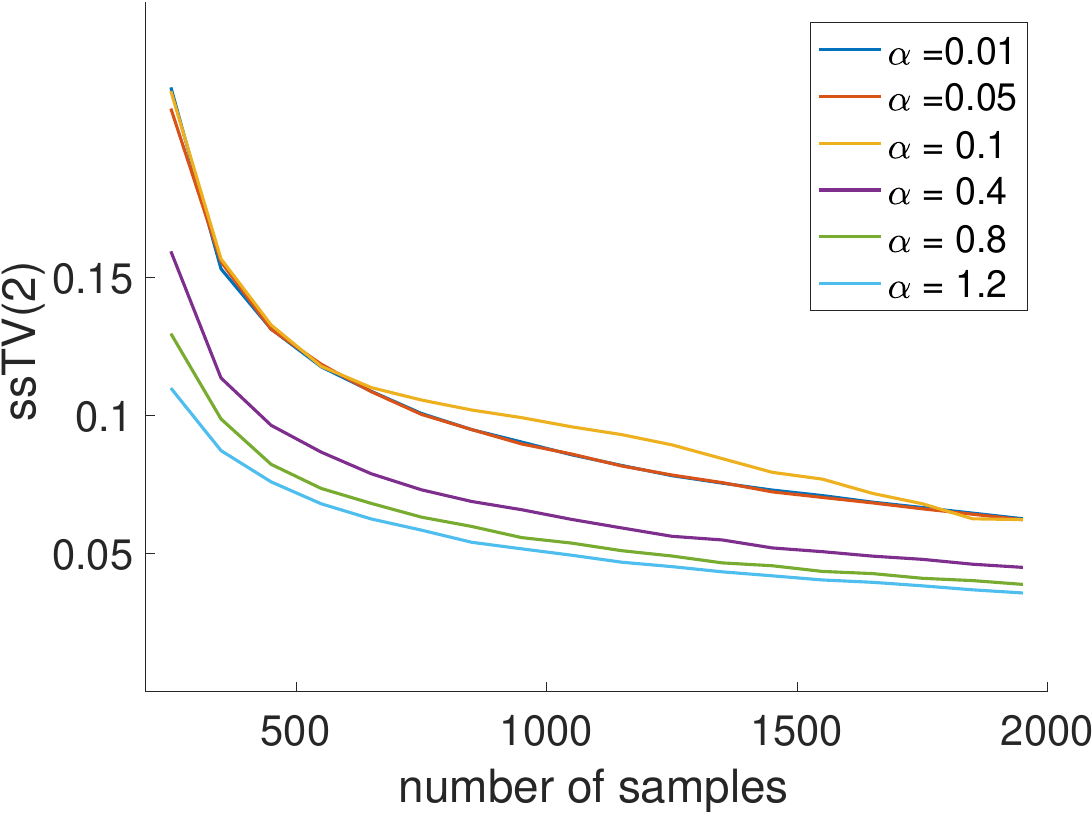}
	\caption{}
	\label{fig:alpha-sstv}
	\end{subfigure}%
	\\
	\begin{subfigure}{.49\textwidth} 
	\includegraphics[width=.97\linewidth , height=1.4in]{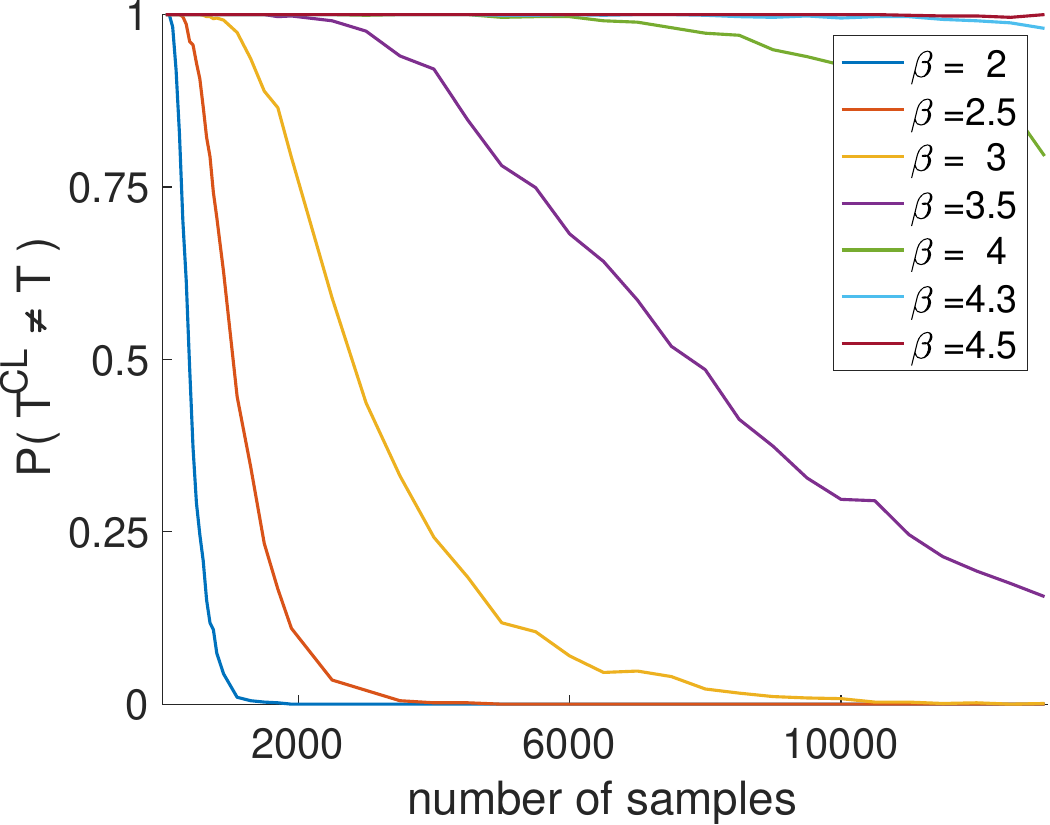}
	\caption{}
	\label{fig:beta-pr}
	\end{subfigure}%
	~
		\begin{subfigure}{.49\textwidth}
	\includegraphics[width=.97\linewidth , height=1.4in]{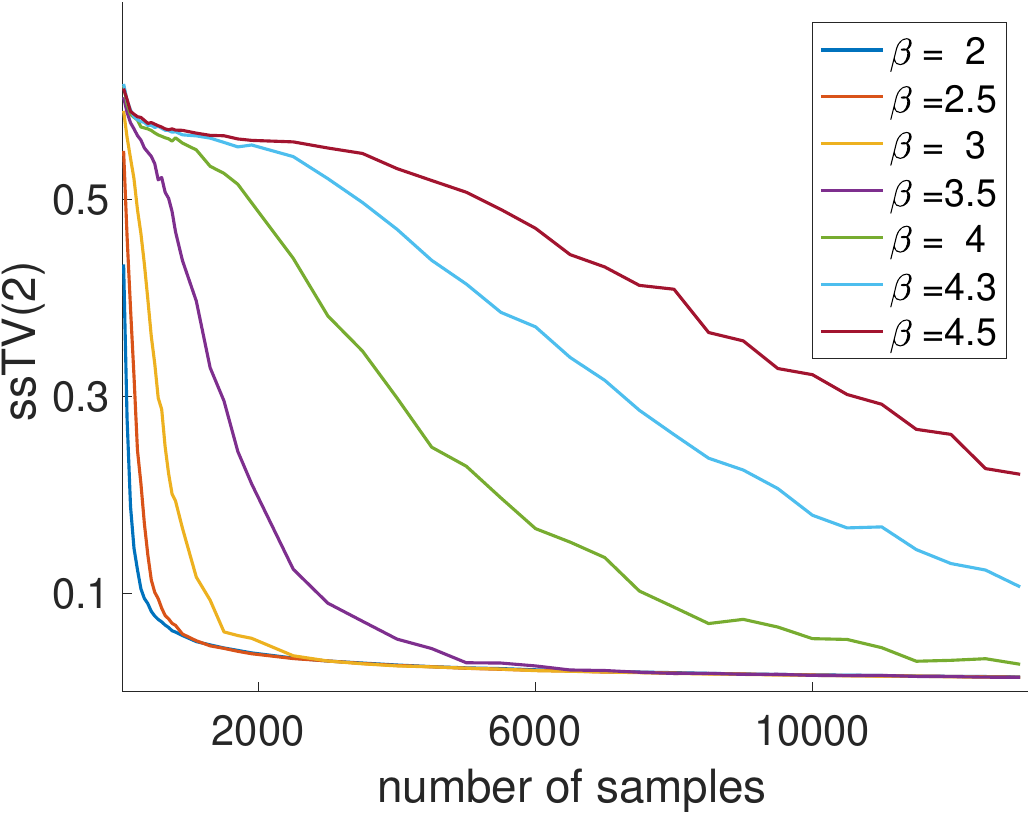}
	\caption{}
	\label{fig:beta-sstv}
	\end{subfigure}
	  \caption{The performance of the Chow-Liu algorithm as a function of the number of samples for generative distribution factorized according to~\eqref{e:Ising} with $p=31$. Figures (a) and (b) use $\beta=2$ and different values of $\alpha$. Figures (c) and (d) use $\alpha=.8$ and different values of $\beta$. Figures (a) and (c) depict  the probability of error and Figures (b) and (d) show the small set TV $\LL^{2}$ as the performance metric.}
	\end{figure}

In Figure~\ref{fig:alpha-pr}, we plot $\P[\Tc\neq\T]$ as a function of the number of samples with $p=31$, $\beta=2$ and different values of $\alpha$. One can observe that the probability of error is higher for smaller values of $\alpha$ (it increases to one as $\alpha$ decays to zero for any value of $n$).  Figure~\ref{fig:alpha-sstv} illustrates the $\LL^{(2)}$ loss in the same setup. This plot shows that as $\alpha$ decays, the loss remains bounded (although not necessarily monotonic in $\alpha$), consistent with Theorem~\ref{t:CLLowerBound}.

 Figures~\ref{fig:beta-pr} and~\ref{fig:beta-sstv} plot the probability of error and the $\LL^{(2)}$ loss as a function of $n$ with $p=31$, $\alpha=.8$, and different values of $\beta$. As $\beta$ increases,  Figure~\ref{fig:beta-pr} shows that the probability of error in learning the tree increases (for any $n$, the probability of error goes to one as $\beta$ grows large enough).  Figure~\ref{fig:beta-sstv} is consistent with the statement of Theorem~\ref{t:CLLowerBound}, which states that expected $\LL^{(2)}$ loss decays as $C' p \exp(-Cne^{-2\beta})+ C'' \sqrt{\log p/n}$.

Appendix~\ref{sec:NumSimSUpp} contains numerical simulations on implementation of forest approximation algorithm and its comparison with the Chow-Liu algorithm. It also contains the simulation results depicting the performance of Chow-Liu algorithm with misspecified models. A tree-structured Ising model  is changed isotropically on the space of distributions by a small offset to construct the generative distribution. This suggests that the the output of the Chow-Liu algorithm is robust to misspecification in the model and close to the generative distribution with respect to $\LL^{(2)}$ loss. 
The performance of the Chow-Liu algorithm in term of $\LL^{(k)}$ loss for general value of $k$ and for generative tree-structured Ising models in presence of external field are also studied in Appendix~\ref{sec:NumSimSUpp} . 
\section*{Discussion}
\label{sec:discussion}

In this paper we prove guarantees on accuracy of prediction for a learned tree-structured Ising model. 
There is a large literature on learning tree-structured Markov random fields, and it is useful to carefully compare the guarantees obtained by each when applied to our setting.
\rvw{In the supplementary material, we review different approaches that could be taken toward learning a tree-structured distribution. 
We also review some known algorithms and their sample complexity.}

There are many interesting questions remaining, including those mentioned in the introduction: model misspecification, how to close the gap between the upper and lower bound on sample complexity, Ising models with external field, and obtaining tight guarantees for $\LL^{(k)}$ (marginals of order $k$) for $k>2$. Of course, it is also of great interest to go beyond tree models and study other classes of models.

\section*{Acknowledgment}
We thank Jerry Li for pointing out an error in an early version of the manuscript and the anonymous reviewers for their feedback which greatly improved the quality of the paper.
We also thank Gregory Wornell, Lizhong Zheng, Gabor Lugosi, Devavrat Shah, David Gamarnik, Elchanan Mossel and Andrea Montanari for stimulating discussions. 

This work was supported in part by the grants ONR N00014-17-1-2147, DARPA  W911NF-16-1-0551, and NSF CCF-1565516.

\bibliographystyle{acm}
\bibliography{MRF_BIB}
\appendix
\section{Information projection lemmas}\label{sec:InfProj}
In the following, we prove two lemmas regarding the maximum likelihood tree.

\begin{lemma}\label{l:MomentMatching}
For a given tree $\T=(\VV,\EE)$, the reverse information-projection of the distribution  $P(x)$ onto the class of the Ising models on $\T$ with no external field (as in Equation~\eqref{e:Ising}), $\widetilde{P}(x)=\Pi_{\T}(P)\triangleq \argmin\limits_{Q\in \mathcal{P}_{\T}}D(P\|Q )$ with the parameter $\widetilde{\theta}\in \Omega_{0,\infty}(\T)$ has the following property:
$\tanh(\widetilde{\theta}_{ij})= \mu_{ij}$
for all $(i,j)\in \EE$, where $\mu_{ij}=\E_{P}X_i X_j$ is the pairwise correlation of the variables under the distribution $P(x)$. Also, we have $D(P\|\Pt)=-H(P) + \sum_{(i,j)\in\EE_{\T}}H_{B}(\frac{1+\mu_{ij}}{2})$ where $H_{B}(x)= x\log \frac{1}{x} + (1-x)\log \frac{1}{1-x}$ is the binary entropy function.
\end{lemma}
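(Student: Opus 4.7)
The plan is to separate the lemma into the two claims (moment-matching characterization of $\wttheta$, and the formula for $D(P\|\Pt)$) and treat each via standard exponential family arguments.

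First I would write the KL divergence explicitly as a function of the parameter $\wttheta$. Substituting the Ising form~\eqref{e:Ising} into $D(P\|\Pt) = -H(P) - \Ex_P[\log \Pt]$ gives
\[
D(P\|\Pt) = -H(P) - \sum_{(i,j)\in\EE} \wttheta_{ij}\mu_{ij} + \Phi(\wttheta).
\]
Since $\Phi$ is convex (it is the log-partition function of an exponential family) and $D(P\|\Pt)$ is differentiable in $\wttheta$, a minimizer is characterized by the first-order condition $\partial/\partial \wttheta_{ij} = 0$, i.e.,
\[
\frac{\partial \Phi(\wttheta)}{\partial \wttheta_{ij}} = \Ex_{\Pt}[X_iX_j] = \mu_{ij}
\quad\text{for all } (i,j)\in \EE.
\]
The remaining step for Part~1 is to identify $\Ex_{\Pt}[X_iX_j]$ with $\tanh(\wttheta_{ij})$ when $(i,j)$ is an edge of $\T$. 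I would prove this by induction on the number of nodes, marginalizing out leaves one at a time: summing out a leaf $\ell$ with edge $(\ell,k)$ and parameter $\wttheta_{\ell k}$ contributes a factor $2\cosh(\wttheta_{\ell k})$ to the partition function and leaves the joint distribution on the remaining nodes a zero-field Ising model with the same parameters on the surviving edges. Peeling down to the two-node edge $(i,j)$ gives directly $\Ex_{\Pt}[X_iX_j] = \tanh(\wttheta_{ij})$. Combined with the optimality condition this yields $\tanh(\wttheta_{ij}) = \mu_{ij}$.

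For Part~2, the key observation is that both singleton and pairwise-on-edge marginals of $\Pt$ and $P$ agree: $P$ has uniform singleton marginals since $P\in\PP_\T$, and the moment-matching condition of Part~1 forces $\Pt(x_i,x_j) = P(x_i,x_j)$ for every edge $(i,j)\in\EE$. Using the tree factorization~\eqref{eq:TreeFac} of $\Pt$,
\[
\log \Pt(x) = \sum_{i\in\VV}\log\Pt(x_i) + \sum_{(i,j)\in\EE}\log\frac{\Pt(x_i,x_j)}{\Pt(x_i)\Pt(x_j)},
\]
each summand depends only on at most the pair $(x_i,x_j)$ along an edge, so $\Ex_P[\log\Pt] = \Ex_{\Pt}[\log\Pt] = -H(\Pt)$, hence $D(P\|\Pt) = -H(P)+H(\Pt)$. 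Finally I would compute $H(\Pt)$ by the tree entropy decomposition $H(\Pt) = \sum_i H(\Pt_i) - \sum_{(i,j)\in\EE} I_{\Pt}(X_i;X_j)$, using $H(\Pt_i) = \log 2$ and the direct calculation $H(\Pt_{ij}) = \log 2 + H_B((1+\mu_{ij})/2)$ (from $\Pt(x_i,x_j) = (1 + x_ix_j\mu_{ij})/4$), which yields $H(\Pt) = \sum_{(i,j)\in\EE}H_B((1+\mu_{ij})/2)$ up to an additive constant absorbed into the convention for $H(P)$.

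The mildly delicate step is the identification $\Ex_{\Pt}[X_iX_j]=\tanh(\wttheta_{ij})$ for edges; everything else is bookkeeping in a log-linear family. I would do that step by the explicit leaf-pruning induction rather than trying to compute $\partial\Phi/\partial\wttheta_{ij}$ directly, since the leaf-pruning argument also gives the useful corollary $\Ex_{\Pt}[X_wX_{\wt}] = \prod_{e\in\path_\T(w,\wt)}\tanh(\wttheta_e)$, which is invoked repeatedly elsewhere in the paper.
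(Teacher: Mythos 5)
Your proof is correct, but it takes a genuinely different route from the paper's. For the moment-matching claim, the paper does not use the generic exponential-family stationarity condition $\partial\Phi/\partial\wttheta_{ij}=\mu_{ij}$; instead it exploits the special structure of zero-field tree Ising models: writing $Q(x)\propto\prod_{(i,j)\in\EE}(1+\tanh\theta_{ij}x_ix_j)$ and using the fact that the edge spins $Y_e=X_iX_j$ are jointly independent under $Q$ (Lemma~\ref{l:IndependentError}), it decomposes $D(P\|Q)$ exactly into $-H(P)+\sum_e H_B(P(Y_e))+\sum_e D(P(Y_e)\|Q(Y_e))$, so that the minimum is visibly attained by matching each edge-spin marginal, and the value of the KL at the minimum drops out of the same identity with no further work. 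Your version instead needs two separate ingredients that the paper's single computation avoids: the leaf-pruning induction to identify $\Ex_{\Pt}[X_iX_j]=\tanh(\wttheta_{ij})$, and the tree entropy decomposition plus the observation $\Ex_P[\log\Pt]=\Ex_{\Pt}[\log\Pt]$ for the second claim. What your approach buys is generality (the first-order condition argument works for any exponential family, not just zero-field tree Ising) and, as you note, the path-correlation corollary; what the paper's buys is brevity and the simultaneous derivation of both claims. One further point in your favor: your remark that the entropy formula holds only ``up to an additive constant'' is not just a hedge --- your bookkeeping gives $H(\Pt)=\log 2+\sum_{(i,j)\in\EE}H_B\bigl((1+\mu_{ij})/2\bigr)$, and the extra $\log 2$ is genuinely needed (test $P$ uniform, where the lemma's stated formula would give $D(P\|\Pt)=-\log 2<0$); the paper's displayed formula silently drops this constant because its factorization of $Q$ omits a factor of $1/2$.
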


\begin{proof}
This lemma is a direct corollary of Theorem 3.3 in \cite{csiszar2004information}. Note that $\widetilde{P}(x)$ is the reverse I-projection of distribution $P(x)$ onto the exponential family with sufficient statistics $X_iX_j$ for $(i,j)\in\EE$. It follows that $\E_{\widetilde{P}}X_iX_j=\E_{P}X_iX_j$ for $(i,j)\in\EE$ which gives $\tanh(\widetilde{\theta}_{ij})= \mu_{ij}$. Section 3.4.2 in \cite{wainwright2008graphical} also addresses this problem in depth.

We provide the proof for the sake of completeness here: 
For the tree $\T=(\VV,\EE)$ and any $Q\in\PP_{\T}$ with parameter vector $\theta$, using~\eqref{e:Ising}, $Q$  factorizes as  $Q(x)=\prod_{(i,j)\in\EE}(1+\tanh \theta_{ij} x_i x_j)/2 $, where $\Ex_{Q} X_iX_j =\tanh \theta_{ij}$. Let $Y_e=X_iX_j$ for all $e=(i,j)\in\EE$ so that $Q(Y_e= +)=(1+\tanh\theta_{ij})/2$ and $P(Y_e=+)=(1+\mu_{ij})/2$ where $\mu_{ij}=\Ex_{P} X_iX_j$.

Given probability mass function $P$ on $\{-1,+1\}^p$ and any $Q\in\PP_{\T}$ with parameters $\theta$, 
\begin{align*}
D(P\|Q) &= \Ex_{P} \left[\log \frac{P}{Q}\right] = -H(P) -\Ex_{P}[\log Q(X)] \\
& = 
 -H(P) -\Ex_{P}\left[\log \prod_{(i,j)\in\EE}\frac{1+\tanh\theta_{ij}X_i X_j}{2}\right] \\
& = -H(P) -\sum_{e=(i,j)\in\EE} \Ex_{P} \left[\log \frac{1+\tanh \theta_{ij}Y_e}{2}\right] \\
&= -H(P) -\sum_{e\in\EE} \Ex_{P} [\log Q(Y_e)]
\\
& = -H(P)+\sum_{e\in\EE} H_B(P(Y_e)) + \sum_{e\in\EE}{D(P(Y_e)\|Q(Y_e))}\,.
\end{align*}
$\Pt=\argmin_{Q\in\PP_{\T}}D(P\|Q)$ with parameters $\widetilde{\theta}$ satisfies the property $\Pt(Y_{e})=P(Y_e)$ for all $e\in\EE$, hence $\tanh(\widetilde{\theta}_{ij})=\mu_{ij}$ for all $(i,j)\in\EE$.
Consequently, $D(P\|\Pt)=-H(P)+\sum_{e\in\EE}H_{B}\Big(\frac{1+\mu_{ij}}{2}\Big)\,.$
\end{proof}

\begin{lemma}\label{l:TcMWST}
Given empirical distribution $\Ph$, the tree $\Tc$ defined in Definition \ref{def:Chow-Liu} can be found as the maximum weight spanning tree over a complete weighted graph where the weights of each edge $(i,j)$ is $|\wmu_{ij}| \,.$
\end{lemma}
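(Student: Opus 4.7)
The plan is to specialize Chow and Liu's classical argument to the zero-field Ising family $\PP_{\T}$. Since the per-sample log-likelihood equals $\Ex_{\Ph}[\log P(X)] = -H(\Ph) - D(\Ph\|P)$, maximizing the likelihood jointly over $P\in\PP_{\T}$ and $\T\in\TT$ is equivalent to
\[
\Tc \;=\; \mathop{\mathrm{argmin}}_{\T\in\TT}\; D\bigl(\Ph\,\big\|\,\Pi_{\T}(\Ph)\bigr)\,,
\]
because the inner minimization over $P\in\PP_{\T}$ has collapsed, by definition, to the reverse information projection $\Pi_{\T}(\Ph)$.

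Next I would decompose the right-hand side into an edge-local sum. By Lemma~\ref{l:MomentMatching}, $\Pi_{\T}(\Ph)$ has uniform singleton marginals and pairwise correlations equal to $\wmu_{ij}$ across edges of $\T$, so on a tree the zero-field structure forces the factorization $\Pi_{\T}(\Ph)(x) = 2^{-p}\prod_{(i,j)\in\EE_{\T}}(1+\wmu_{ij}x_ix_j)$. Taking $\Ex_{\Ph}[\log \Pi_{\T}(\Ph)(X)]$ and noting that $X_iX_j$ under $\Ph$ equals $\pm 1$ with probabilities $(1\pm\wmu_{ij})/2$, each edge contributes $\log 2 - H_B\!\bigl((1+\wmu_{ij})/2\bigr)$ to $-\Ex_{\Ph}\log\Pi_{\T}(\Ph)(X)$, where $H_B$ is the binary entropy. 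Modulo a constant that depends only on $\Ph$ and hence is irrelevant for the optimization, $D\bigl(\Ph\,\big\|\,\Pi_{\T}(\Ph)\bigr)$ is $\sum_{(i,j)\in\EE_{\T}}H_B\!\bigl((1+\wmu_{ij})/2\bigr)$. Therefore $\Tc$ is the spanning tree maximizing $\sum_{(i,j)\in\EE_{\T}}\varphi(\wmu_{ij})$ with $\varphi(\rho)\triangleq \log 2 - H_B((1+\rho)/2)$, which is precisely the empirical mutual information between two binary variables with uniform singletons and correlation $\rho$.

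The last step is a matroid observation that completes the reduction. The function $\varphi$ is strictly increasing in $|\rho|$ on $[-1,1]$ and vanishes at $\rho=0$. Since Kruskal's greedy algorithm for the maximum-weight spanning tree depends only on the ordering of edge weights and not on their numerical values, applying any strictly monotone transformation to the weights leaves the optimizer unchanged. Consequently, the $\varphi$-weighted and the $|\wmu_{ij}|$-weighted MWSTs coincide, giving $\Tc = \mathop{\mathrm{argmax}}_{\T'}\sum_{e\in\EE_{\T'}}|\wmu_e|$. The one place to be careful is exactly this reduction: because $\varphi$ is strictly concave on $[0,1]$, one cannot substitute $|\wmu|$ for $\varphi(|\wmu|)$ just by comparing sums of $p-1$ terms edge-by-edge. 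The matroid/greedy invariance is what makes the substitution legitimate, and everything else in the proof is routine bookkeeping from the explicit Ising factorization.
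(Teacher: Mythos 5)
Your proposal is correct and follows essentially the same route as the paper: reduce maximum likelihood over $\PP_\T$ to minimizing $D(\Ph\|\Pi_\T(\Ph))$, use the edge-local decomposition into binary entropies from Lemma~\ref{l:MomentMatching}, and then invoke the fact that the greedy (Kruskal/Prim) construction of the max-weight spanning tree depends only on the ordering of edge weights, so the strictly monotone reparametrization from $H_B((1+\wmu_{ij})/2)$ to $|\wmu_{ij}|$ leaves the optimizer unchanged. Your explicit caveat that one cannot simply substitute weights term-by-term, and that the matroid/greedy invariance is the step doing the work, is exactly the point the paper's proof relies on as well.
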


\begin{proof}
Using Lemma \ref{l:MomentMatching}, $\Tc=\argmin_{\T\in\TT} \sum_{e\in\EE_{\T}}H_{B}\big(\frac{1+\wmu_{e}}{2}\big)-H(\Ph) = \argmin_{\T\in\TT} \sum_{e\in\EE_{\T}}H_{B}\big(\frac{1+\wmu_{e}}{2}\big)\,.$
The maximum weight spanning tree can be implemented greedily using Kruskal algorithm or Prim's algorithm \cite{cormen2009introduction}. So, finding the maximum weight spanning tree only depends on the sorted order of the edges of the graph. 

$H_{B}\big(\frac{1+\wmu_{ij}}{2}\big)$ is a monotonically increasing function of $|\wmu_{ij}|$. So, sorting all the edges $(i,j)$ in the complete graph based $H_{B}\big(\frac{1+\wmu_{ij}}{2}\big)$ or $|\wmu_{ij}|$ gives the same order. This gives $$\Tc=\argmin_{\T\in\TT} \sum_{(i,j)\in\EE_{\T}}H_{B}\Big(\frac{1+\wmu_{ij}}{2}\Big)=\argmin_{\T\in\TT} \sum_{(i,j)\in\EE_{\T}}|\wmu_{ij}|\,.\qedhere$$
\end{proof}

\section{Samples Necessary for small SSTV}\label{sec:LBSmallSSTV}
In this theorem, we prove a lower bound for learning a tree-structured Ising model with the ssTV loss function $\LL^{(2)}$. Unlike the bound in Theorem~\ref{t:inferenceUpper}, the following lower bound does not require  $\tanh(\beta)>\tanh(\alpha)+2\eta$. This bound can be used specifically in the extreme case of $\alpha=\beta$.

\begin{theorem}\label{t:LBMainDegenerate}
Fix $\eta>0$. Let $\mu=\tanh\alpha$ and
\begin{equation}\label{eq:defdlowerBD2}
d=\min\Big\{\Big\lceil\frac{\log(1-2\eta/\mu)}{\log \mu}\Big\rceil, p-1 \Big\}\,.
\end{equation}
Suppose one observes $$n<\frac{\log(p-d)}{4\alpha \tanh(\alpha)[1-\tanh(\alpha)^d]}$$ samples. Then, the worst-case probability of $\LL^{(2)}$ loss greater than $\eta$ taken over trees $\T\in\TT$ and distributions $P\in\PP_{\T}(\alpha,\beta)$ is at least half for any algorithm, i.e., $$\inf_{\phi}\sup_{\T\in \TT\atop P\in\PP_{\T}(\al,\beta)} \P\left[\LL^{(2)}(P,\phi(X^{\small{(1:n)}})) > \eta\right]>1/2\,.$$  
\end{theorem}

\begin{proof}
We construct a family of $M=p-d$ models. Each of the models correspond to a tree-structured model in which the edge parameters on the edges of the tree are taking a fixed value $\theta_e=\alpha$. 
Let $\theta^0_{i,i+1}=\alpha$ for $i=1,\cdots,p-1$. For $m=1,\cdots,p-1-d$, in the $m$-th model, we have $\theta_{m, m+d+1}=\alpha$ and $\theta_{i,i+1}=\alpha$ for $i=1,\cdots,m-1,m+1,\cdots,p-1$. Hence, there are $p-d$ models. For $m'\neq m$,
\begin{align*}
\max_{i,j} |\Ex_{\theta^{m'}}[X_i X_j]-\Ex_{\theta^{m}}[X_i X_j]|  \geq \tanh(\alpha)[1-\tanh(\alpha)^d]\geq 2\eta\,.
\end{align*}
Also, using~\eqref{e:KLIsing} \[J(\theta^{m} \| \theta^{m'}) \leq 2\alpha \tanh(\alpha)[1-\tanh(\alpha)^d]\,. \]
Fano's inequality  (Corollary \ref{cor:Fano-loss}) gives the bound.
\end{proof}

The following corollary looks into the asymptotic value of the above lower bound when $p\to \infty$ ($p$ is large).

\begin{corollary}\label{cor:LBD2}
Fix $\eta>0$. 
If there exists a constant $0<c<1$ such that 
\[\tanh(\alpha)\big[1-\tanh(\alpha)^{(p^c)}\big]\geq 2\eta\]
(or equivalently, $d<p-\max\{1,p^{c}\}$ using the definition in Equation~\eqref{t:LBMainDegenerate}), and one observes
\[n<\frac{c}{16}\,\min\Big\{\frac{1}{\alpha\eta},\frac{e^{2\alpha}}{\alpha\tanh(\alpha)}\Big\}\log p\]
samples, then
$$\inf_{\phi}\sup_{\T\in \TT\atop P\in\PP_{\T}(\al,\beta)} \P\left[\LL^{(2)}(P,\phi(X^{\small{(1:n)}})) > \eta\right]>1/2\,.$$  
\end{corollary}

This corollary is a direct consequence of Theorem~\ref{t:LBMainDegenerate}.
  In particular, in the asymptotic regime of interest in which $\alpha=\beta$ (we don't assume any gap between upper and lower bounds of edge parameters), $p\to\infty$  and $\eta\to 0$, the above lower bound entails that if $n<Ce^{2\alpha}/[\alpha\tanh(\alpha)]\,\log p$, then the $\LL^{(2)}>\eta$ with probability greater than $1/2$. Interestingly, this lower bound is independent of $\eta$. This lower bound is also within multiplicative constant of  lower and upper bounds of correct recovery of structure (zero-one loss function) presented in Theorems~\ref{t:structLower} and~\ref{t:structUpper} (on the assumption $\alpha=\beta$). To get the matching upper bound in this regime, one could simply run Chow-Liu algorithm to recover $\Tc$ (which is guaranteed to be equal to $\T$ with probability greater than $1-\delta$ if $n>C'e^{2\alpha}\tanh^{-2}(\alpha)\,\log (p/\delta)$) and choose $\widetilde{\theta}_{ij}=\alpha$ for all $(i,j)\in\EE_{\Tc}$. 
\section{Proof of Sample Complexity Upper Bound of Forest Approximation Algorithm}
\label{sec:ForestApprox}
\begin{proof}
[Proof of Proposition \ref{prop:TruncationUpper}]

Let $\epsilon_1=\eta e^{-\beta}/8$ and $\gamma_1=\eta/4$. Using  Lemmas~\ref{l:EcorrLemma},~\ref{l:PcorrLemma} and~\ref{l:NcorrLemma}, given $n> \max\{128e^{2\beta}/\eta^{2} \log\frac{6p^2}{\delta} ,400/\eta^2\log\frac{12p^2}{\delta} \}$ samples, the event $\mathtt{E}(\eps_1,\gamma_1)$  occurs with probability greater than $1-\delta$. We will show that on the event $\mathtt{E}(\eps_1,\gamma_1)$, the statement of the proposition holds.
The proof has two steps. In Step~1, we prove that the output of the forest approximation algorithm is a subgraph of the original tree, \textit{i.e.}, $\EE_{\Th}\subseteq \EE_{\T}$. In Step~2, we show that the ssTV bound holds. 
\paragraph{Step~1}
To prove $\EE_{\Th}\subseteq\EE_{\T}$, we will show that on the event $\mathtt{E}(\eps_1,\gamma_1)$, all the edges in $(i,j)\in\EE_\Tc$  with $|\wmu_{ij}|>\tau(\eps_1)+\eps_1$ (the edges which survive the truncation algorithm and construct the approximated forest) belong to the original tree, \textit{i.e.}, $(i,j)\in\EE_{\T}$.

We prove that if  $(i,j)\in\EE_\Tc$  and $|\wmu_{ij}|>\tau(\eps_1)+\eps_1$, then $(i,j)\in\EE_\T$.  On the event $\mathtt{E}^\mathrm{corr}(\eps_1)$, defined in~\eqref{e:Ecorrdef}, $|\wmu_{ij}|>\tau(\eps_1)+\eps_1$ implies $|\mu_{ij}|>\tau(\eps_1)$. 
We prove  $(i,j)\in\EE_\T$ by contradiction:
 Let us assume $(i,j)\notin\EE_\T$. Then $\path_{\T}(i,j)$ has at least two edges in it. Also, $\tau(\eps_1)<|\mu_{ij}|=\prod_{e\in\path_{\T}(i,j)}|\mu_e|$. Then, for all $e\in\path_{\T}(i,j)$, we have $\tau(\eps_1)<|\mu_e|$ which implies $e\in\EEstrong_{\T}(\eps_1)$ as defined in~\eqref{e:Estrong}. Hence, on the event $\mathtt{E}^\mathrm{strong}(\eps_1)$, defined in~\eqref{e:Pcorr} $e\in\EE_{\Tc}$  for all $e\in\path_{\T}(i,j)$. This construct a path of length at least two between $i$ and $j$ in $\Tc$, which contradicts $(i,j)\in\EE_{\Tc}$.
 It follows that $\EE_{\Th} \subseteq \EE_{\T}$.

\paragraph{Step~2}
Next, we consider the edges $e\in\EE_{\T}$, with $|\mu_e|> \tau(\epsilon_1)+2\epsilon_1$.
On event $\mathtt{E}^\mathrm{strong}(\eps_1)$, these edges are recovered by the Chow-Liu algorithm.
On event $\mathtt{E}^\mathrm{corr}(\eps_1)$, the empirical correlation on these edges are bounded as $|\mu_e|> \tau(\epsilon_1)+\eps_1$. Hence, the forest approximation algorithm does not remove these edges. 
This implies that all  the  edges in the original tree $\T$ with $|\mu_{e}|> \tau(\eps_1)+2\epsilon_1$ are in the output of the forest approximation algorithm, $\Th$.

To bound the ssTV, we use the decomposition into two terms as per~\eqref{e:TrianLoss}. We first study the loss due to graph estimation error $\LL^{(2)}(P,\Pi_{\Th}(\P))$. For any pair of nodes $w$ and $\wt$ in $\Th$, since $\EE_{\Th} \subseteq \EE_{\T}$, if there is a path between $w$ and $\wt$, then this is the same as the path between them in $\T$. Hence looking at~\eqref{e:LL2}, $\Ex_{P}[X_wX_{\wt}]=\Ex_{\Pi_{\Th}(\P)}[X_wX_{\wt}]$ and the error is zero. 
If there is no path between $w$ and $\wt$ in $\Th$, then $\Ex_{\Pi_{\Th}(\P)}[X_wX_{\wt}]=0$ and the error is $\Ex_{P}[X_w X_{\wt}]$. This occurs whenever there is an edge $e\in\path_{\T}(w,\wt)$ such that $e\notin\EE_{\Th}$. All the missing edges in $\Th$ satisfy $|\mu_e|\leq \tau(\eps_1)+2\epsilon_1$. This implies that $|\Ex_{P}[X_wX_{\wt}]|\leq \tau(\eps_1)+2\epsilon_1$ and it follows that $\LL^{(2)}(P,\Pi_{\Th}(\P))\leq \tau(\eps_1)+2\epsilon_1\,.$

To bound the loss due to parameter estimation error $\LL^{(2)}(\Pi_{\Th}(\P),\Pi_{\Th}(\Ph))$, we again consider an arbitrary pair of nodes $w,\wt$. If there is a path between them in $\Th$, then there is the same path between them in $\T$ since $\EE_{\Th}\subseteq\EE_{\T}$. On the event $\mathtt{E}^\mathrm{cascade}(\gamma_1)$, for such a pair of nodes, $|\Ex_{\Pi_{\Th}(\P)}[X_wX_{\wt}]-\Ex_{\Pi_{\Th}(\Ph)}[X_wX_{\wt}]|\leq \gamma_1$ . If there is no path between $w$ and $\wt$ in $\Th$, then $\Ex_{\Pi_{\Th}(\P)}[X_wX_{\wt}]=\Ex_{\Pi_{\Th}(\Ph)}[X_wX_{\wt}]=0$. Hence, $\LL^{(2)}(\Pi_{\Th}(\P),\Pi_{\Th}(\Ph))\leq \gamma_1$.

Combining the two error terms gives $\LL^{(2)}(P,\Pi_{\Th}(\Ph))\leq \tau(\eps_1)+2\epsilon_1+\gamma_1$. With the choice of $\epsilon_1 = \eta e^{-\beta}/8$ and $\gamma_1= \eta/4\,$ then $\tau(\eps_1)=4\epsilon_1/\sqrt{1-\tanh \beta}\leq 4\epsilon e^{\beta}\leq \eta/8$ and $\LL^{(2)}(P,\Pi_{\Th}(\Ph))\leq \tau(\eps_1)+2\epsilon_1+\gamma_1 \leq \eta$.
\end{proof}

\section{Tail bounds}
\label{app:devBD}
We state a standard form of Hoeffding's inequality \cite{hoeffding1963probability}. This bound is used in the proof of Lemma~ref{l:EcorrLemma}
to show that event $\Ecorr$ defined in~\eqref{e:Ecorrdef} occurs with probability at least $1-2p^2\exp(-n\eps^2/2)$. 
\begin{lemma}[Hoeffding's inequality]\label{lem:Hoeffding}
Let $Z^{(1)},\cdots,Z^{(n)}$ be $n$ i.i.d. random variables taking values in the interval $[-1,1]$ and let $S_n=\frac1{n}\big(Z^{(1)}+\cdots+\\ Z^{(n)}\big)$. Then,
$\P\left[|S_n -  \Ex Z^{(1)} |>t\right]\leq 2\exp(-n t^2/2)\,.$
\end{lemma}

The next lemma is the standard Bernstein's inequality as quoted from \cite{tsybakov2008introduction}.
This bound is used in the proof of 
Lemma~\ref{l:PcorrLemma} to show that 
the event $\Pcorr$ defined in~\eqref{e:Pcorr} with $\eps=\sqrt{2/n \log(2p^2/\delta)}$ occurs with probability at least $1-\delta$.
\begin{lemma}[Bernstein's inequality]\label{l:Bernstein}
 Let $Z^{(1)}, ..., Z^{(n)}$ be i.i.d. random variables. 
 Suppose that for all $i$,  $|Z^{(i)}-\Ex[Z^{(i)}]|\leq M$ almost surely.
  Then, for all positive $t$,
\[\P\left[\bigg|\sum_{i=1}^n Z^{(i)} - n\Ex[Z^{(1)}] \bigg|\geq t\right]\leq 2 \exp\left(-\frac{t^2}{2n\mathsf{VAR}(Z)+\frac{2}{3}Mt}\right)\,.\]
\end{lemma}

The following lemma is essentially a restatement of Theorem 3 of Hoeffding \cite{hoeffding1963probability}.
This bound is used in the proof of 
 Lemma~\ref{l:NcorrLemma} in Appendix~\ref{sec:NcorrProof} to show that if $n>\max\{25/\gamma^2 \log(4p^2/\delta), 108 e^{2\beta}\log(2p^3/\delta)\}$, then the event $\Ncorrg$ defined in~\eqref{e:Ncorrdef} occurs with probability at least $1-\delta$. 
 
\begin{lemma} \label{l:hoeffd2}
Let $Z^{(1)}, \cdots, Z^{(n)}$ be i.i.d. random variables such that $\Ex{\big[Z^{(i)}\big]}=0,$ $|Z^{(i)}|\leq b$ and $\Ex{\big[(Z^{(i)})^2\big]}\leq \sigma^2$. Let $\bar{Z}=\frac{1}{n}\sum_{i=1}^n Z^{(i)}$. Then
\begin{align}
\Ex{\big[\exp(\lambda \bar{Z})\big]} &\leq \exp\left\{n\,f\Big(\frac{\sigma^2}{b^2} ,\frac{b|\lambda|}{n}\Big)\right\},
\end{align}
where we defined 
\begin{equation}
f(u,c)=\log\left(\frac{1}{1+u}e^{-cu}+ \frac{u}{1+u}e^c\right).
\end{equation}
Also, if $c\geq 0$, the function $f(u,c)$ is concave and nondecreasing in $u$ for $u\geq 0$. And for any $t\geq 0,$
\begin{equation} \label{eq:hoeff2-deviation}
\P\Big[|\bar{Z}|> t\Big] \leq 2 \left\{\Big(1+\frac{bt}{\sigma^2}\Big)^{-(1+bt/\sigma^2) \frac{\sigma^2}{b^2+\sigma^2}} \,\Big(1-\frac{t}{b}\Big)^{-(1-t/b) \frac{b^2}{b^2+\sigma^2}} \right\}^n\,.
\end{equation}
\end{lemma}
\begin{proof}
 Using Equation (11) in Bennett \cite{bennett1962probability}, for any $\lambda$,
\begin{align*}
\Ex{\left[\exp{\big(\lambda Z_i\big)}\right]} &\leq \frac{b^2}{b^2+\sigma^2} \exp\Big(-\frac{\sigma^2}{b}|\lambda|\Big) +\frac{\sigma^2}{b^2+\sigma^2} \exp\big(b|\lambda|\big) = \exp\left\{ f\Big(\frac{\sigma^2}{b^2} , b|\lambda|\Big)\right\}.
\end{align*}
Bennett \cite{bennett1962probability} proves this for $\lambda>0$ with the assumption $Z_i<b$, but the above modification is proved similarly using $|Z_i|< b$.
 Hence, 
 \begin{align*}
\Ex{\Big[\exp{\big(\lambda \bar{Z}\big)}\Big]} &=\prod_{i=1}^n \Ex{\left[\exp\Big(\frac{\lambda}{n}Z_i\Big)\right]} 
\leq \exp\left\{n\,f\Big(\frac{\sigma^2}{b^2} , b\frac{|\lambda|}{n}\Big)\right\}.
\end{align*}
Lemma 3 in \cite{hoeffding1963probability} shows that  $\frac{d^2}{du^2}f(u,c)\leq 0$ for $u\geq 0$ and $c\geq 0$. Following the notation in the same proof, to show that $f(u,c)$ is increasing in $u$, we write $f(u,c)=c+ \log f_1(y,c),$ where $y=1+u$ and $f_1(y,c)= \big(\exp(-cy) - 1 + y\big)/y$. So, for any $c>0,$
\[\frac{d}{dy}f_1(y,c) =\big[ 1-(1+cy)e^{-cy}\big]y^{-2} \geq 0\,\]
since $e^{cy}\geq 1+cy\,.$ This shows that $f(u,c)$ is nondecreasing in $u$ for $u\geq 0$. 

Using a Chernoff bound, 
\begin{align*}
\P[\bar{Z}> t] \leq \min_{\lambda>0} \exp\left\{n\Big[\,f\Big(\frac{\sigma^2}{b^2} , b\frac{|\lambda|}{n}\Big)- \lambda \frac{t}{n}\Big]\right\}\,.
\end{align*}
Plugging $\lambda=\lambda^*$ with
\[\lambda^* =\frac{n b}{b^2+\sigma^2} \log \frac{1+ \frac{tb}{\sigma^2}}{1-\frac{t}{b}}.\]
 with change of variables $u=\sigma^2/b^2$ and $s=tb/\sigma^2$ gives
\begin{align*}
\P[\bar{Z}> t] \leq  &\left[\frac{1}{1+u} \exp\Big(\frac{-u}{1+u} \log\frac{1+s}{1-su}\Big) + \frac{u}{1+u}\exp\Big(\frac{1}{1+u} \log\frac{1+s}{1-su}\Big)\right]^n \\
&\exp\left(-\frac{s u n}{u+1}\log\frac{1+s}{1-su}\right) \\
=& (1+s)^{-(1+s)\frac{un}{u+1}} (1-su)^{-(1-su)\frac{n}{u+1}}
\left[\frac{1-su}{1+u} + \frac{u(1+s)}{u+1}  \right]^n\\
=& (1+s)^{-(1+s)\frac{un}{u+1}} (1-su)^{-(1-su)\frac{n}{u+1}}\,.
\end{align*}
A similar Chernoff bounds with $\lambda=-\lambda^*$ upper bounds $\P[\bar{Z}<-t]$. Combining these two bounds gives~\eqref{eq:hoeff2-deviation}.
\end{proof}
\section{End-to-end error on paths in the original tree (Proof of Lemma 8.10)}\label{sec:NcorrProof}

For each edge $e\in\EE_{\T}$,
let 
\begin{equation}\label{eq:def-gammae}
\gamma_e = \sqrt{3 \frac{1-\mu_e^2}{n}\log(2p/\delta)}
\end{equation}
and 
\begin{equation}\label{eq:defEdgErr}
  \EdgErr_e = \Big\{|\wmu_e-\mu_e|\leq \gamma_e\Big\}
\end{equation}
be the event that the empirical correlation on edge $e$ is within $\gamma_e$ of the population value. For $\A\subseteq\EE_{\T}$, let $\EdgErr({\A})=\cap_{e\in\A}\EdgErr_e$ (This event will only be used for the purpose of proof of Lemma~\ref{l:NcorrLemma}). 

Now, if $n> 108 e^{2\beta} \log(2 p/\delta)$, then 
\begin{equation}\label{eq:bound-gamma-e}
\gamma_e < \sqrt{\frac{3(1-\mu_e^2)}{108 e^{2\beta} \log(2p/\delta)}\log(2p/\delta)}\leq \frac{1}{6}(1-\mu_e^2)\,,
\end{equation}
where the last inequality is due to $e^{-2\beta}\leq 1-\tanh(\beta)\leq 1-|\mu_e|\leq 1-\mu_e^2$. 

\begin{lemma}\label{l:EdgErrprb}
If $n>108 e^{2\beta} \log(2 p/\delta)$, then $\P\big[\,\big(\EdgErr({\EE_{\T})}\big)^c\big]\leq \delta$.
\end{lemma}

\begin{proof}
For each edge $e$, the variance of $\wmu_e$ is $(1-\mu_e^2)/n$.
By Bernstein's inequality (Lemma~\ref{l:Bernstein}) and the bound~\eqref{eq:bound-gamma-e} on $\gamma_e$,
\[\P[\,(\EdgErr_{e})^c]\leq 2\exp\left(-\frac{n\gamma_e^2}{2(1-\mu_e^2)+4\gamma_e/3}\right)\leq 2 \exp\left(-\frac{n\gamma_e^2}{3(1-\mu_e^2)}\right)\]
The definition of $\gamma_e$ in~\eqref{eq:def-gammae} together with~\eqref{eq:bound-gamma-e} and a union bound over $p$ edges  prove the lemma. 
\end{proof}

To prove Lemma~\ref{l:NcorrLemma}, an upper bound on the moment generating function of a random variable with  tight dependency on the variance  is required. This is given in the Lemma~\ref{l:hoeffd2} in Appendix~\ref{app:devBD} which is essentially a restatement of Theorem 3 of Hoeffding \cite{hoeffding1963probability}.

\begin{proof}[Proof of Lemma~\ref{l:NcorrLemma}]
For given pair of nodes $w,\wt$, let $\path_{\T}(w,\wt)=\{e_1, e_2, \cdots, e_d\}$ and use the shorthand $\mu_i=\mu_{e_i}$ and $\wmu_i=\wmu_{e_i}$. We will show that if $n>\max\{25/\gamma^2 \log(4p^2/\delta), 108 e^{2\beta}\log(2p^3/\delta)\}$, then 
\begin{equation} \label{eq:path-error-pairerror}
\P\bigg[\,\Big|\prod_{i=1}^d\wmu_i - \prod_{i=1}^d\mu_i\Big|> \gamma\,\bigg]\leq 2\delta/p^2\,,
\end{equation}
independent of $d$. A union bound over ${p \choose 2}\leq p^2/2$ pairs of nodes $w$ and $\wt$ provides the desired lower bound for $\P\big[\Ncorrg\big]$.

If $d=1$, applying Lemma~\ref{lem:Hoeffding} gives that if $n>2/\gamma^2\log(p^2/\delta)$ then~\eqref{eq:path-error-pairerror} holds. To show~\eqref{eq:path-error-pairerror} for $d\geq 2$, let \begin{equation}\label{def:Ai}
M_i=(\wmu_i - \mu_i) \prod_{j=1}^{i-1} \wmu_j \prod_{j=i+1}^d \mu_j\,.
\end{equation}
Then, $\prod_{i=1}^d\wmu_i - \prod_{i=1}^d\mu_i= \sum_{i=1}^d M_i$.
Let $\A_k=\{e_1, \cdots, e_k\}$ for $k\leq d$. Conditioning on $\EdgErr{(\A_{d-1})}$ (defined in~\eqref{eq:defEdgErr}),
\begin{align}\label{eq:totalP-errorpath}
\P\Big[\,\big|\sum_{i=1}^d M_i\big| > \gamma\,\Big] &\leq\P\Big[\,\big|\sum_{i=1}^d M_i\big|> \gamma \,\big|\, \EdgErr{(\A_{d-1})}\,\Big]
 + \P\Big[\,\big(\EdgErr{(\A_{d-1})}\big)^c\,\Big].
\end{align}
The containment $\EdgErr(\EE_{\T})\subseteq \EdgErr{(\A_{d-1})}$ gives $\P[(\EdgErr{(\A_{d-1})})^c]\leq \P[(\EdgErr(\EE_{\T}))^c]$ and Lemma~\ref{l:EdgErrprb} shows that if $n>108 e^{2\beta} \log(2p^3/\delta)$, then $\P[(\EdgErr(\EE_{\T}))^c]\leq \delta/p^2$.

We bound the first term in the last display via bounding the moment generating function and applying Chernoff bound:   Given a real number $\lambda,$ the recurrence  below upper bounds  $\Ex\Big[\exp\big(\lambda\sum_{i=1}^d M_i\big)\\\,\big|\, \EdgErr{(\A_{d-1})}\Big]$.
For any $k\leq d$,
\begin{align*}
\Ex\bigg[\exp\Big(\lambda\sum_{i=1}^k M_i\Big)&\,\big|\, \EdgErr{(\A_{k-1})}\bigg]\\
&= \Ex\bigg[\Ex\Big[\exp\Big(\lambda\sum_{i=1}^{k-1} M_i\Big)\, \exp(\lambda M_k)\,\big|\,\wmu_1,\cdots,\wmu_{k-1}\Big]\,\Big|\, \EdgErr{(\A_{k-1})} \bigg]\\
&\overset{(a)}{=} \Ex\bigg[\exp\Big(\lambda\sum_{i=1}^{k-1} M_i\Big)\, \Ex\Big[\exp(\lambda M_k)\,\big|\,\wmu_1,\cdots,\wmu_{k-1}\Big]\,\Big|\, \EdgErr{(\A_{k-1})}  \bigg]\\
&\overset{(b)}{\leq} \Ex\bigg[\exp\Big(\lambda\sum_{i=1}^{k-1} M_i\Big)\, \exp\Big\{n f\Big(\frac{1-\mu_k^2}{4}\prod_{j=1}^{k-1} \wmu_j^2\prod_{j=k+1}^{d} \mu_j^2, 2\frac{|\lambda|}{n}\Big)\Big\} \,\Big|\, \EdgErr{(\A_{k-1})} \bigg]\\
&\overset{(c)}{\leq} 
\exp\Big\{n f\Big(\frac{1-\mu_k^2}{4}\prod_{{j=1,\atop j\neq k}}^{d} (\mu_j^2+2\gamma_e), 2\frac{|\lambda|}{n}\Big)\Big\}
 \,\,\Ex\bigg[\exp\Big(\lambda\sum_{i=1}^{k-1} M_i\Big)\,\Big|\, \EdgErr{(\A_{k-1})} \bigg]
\end{align*}
where (a) holds since for $i\leq k-1,$ $M_i\in\sigma(\wmu_1,\cdots,\wmu_{k-1})$.  
(b) is derived by applying Lemma~\ref{l:hoeffd2} to random variable $M_k$ conditional on $\wmu_1,\cdots,\wmu_{k-1}$. Lemma~\ref{l:IndependentError} shows that $\wmu_k$ is independent of $\wmu_1, \cdots, \wmu_{k-1}$. Thus, $M_k = \frac{1}{n}\sum_{i=1}^n Z^{(i)}$ where  $Z^{(i)}$'s are independent and 
\[Z^{(i)}=\begin{cases}(1-\mu_k)\prod_{j=1}^{k-1} \wmu_j \prod_{j=k+1}^d \mu_j, \quad &\text{with probability } \frac{1+\mu_k}{2}\\
-(1+\mu_k)\prod_{j=1}^{k-1} \wmu_j \prod_{j=k+1}^d \mu_j, \quad &\text{with probability } \frac{1-\mu_k}{2}\,.
\end{cases}\] 
 So, $|Z^{(i)}|\leq 2$. Conditional on $\wmu_1,\cdots,\wmu_{k-1}$, the mean of $Z^{(i)}$ is zero,
 and the variance of $Z^{(i)}$ is ${(1-\mu_k^2)}\prod_{j=1}^{k-1} \wmu_j^2\prod_{j=k+1}^{d} \mu_j^2$. 
Inequality (c) holds since conditional on event $\EdgErr{(\A_{k-1})}$ (defined in \ref{eq:defEdgErr}), $\wmu_j^2 \leq \mu_j^2 + 2\gamma_j$ for all $j\leq k-1$. Also, the function $f(u,c)$ is nondecreasing in $u$ for $u,c\geq 0$ (Lemma~\ref{l:hoeffd2}).

 To get a recurrence, 
\begin{align*}
&\hspace{-.1cm}\Ex\bigg[\exp\Big(\lambda\sum_{i=1}^{k-1} M_i\Big)\,\Big|\, \EdgErr{(\A_{k-1})} \bigg] \P[\EdgErr{(\A_{k-1})}]\\
&= \Ex\bigg[\exp\Big(\lambda\sum_{i=1}^{k-1} M_i\Big) \,\Big|\, \EdgErr_{e_{k-1}}, \EdgErr{(\A_{k-2})} \bigg] \P\big[\EdgErr_{e_{k-1}}\,\big|\,\EdgErr{(\A_{k-2})}\big] \P[\EdgErr{(\A_{k-2})}] \\
&= \Ex\bigg[\exp\Big(\lambda\sum_{i=1}^{k-1} M_i\Big),  {1\!\!1}\big[\EdgErr_{e_{k-1}}\big] \,\Big|\, \EdgErr{(\A_{k-2})} \bigg]  \P[\EdgErr{(\A_{k-2})}] \\
&\leq \Ex\bigg[\exp\Big(\lambda\sum_{i=1}^{k-1} M_i\Big) \,\Big|\, \EdgErr{(\A_{k-2})} \bigg]  \P\big[\EdgErr{(\A_{k-2})}\big]\,. 
\end{align*}
By the last two displays,
\begin{align}
&\Ex\bigg[\exp{\Big(\lambda\sum_{k=1}^d M_i\Big)} \,\Big|\, \EdgErr{(\A_{d-1})}  \bigg] \P[\EdgErr{(\A_{d-1})}]\notag \\
&\leq \exp\bigg\{n \sum_{k=1}^d f\Big(\big(1-\mu_k^2\big)\prod_{{j=1\atop j\neq k}}^{d} \big(\mu_j^2+2\gamma_e\big), 2\frac{|\lambda|}{n}\Big)\bigg\}\notag\\
 &\leq \exp\bigg\{nd\,  f\Big(\frac{1}{d}\sum_{k=1}^d\big(1-\mu_k^2\big)\prod_{{j=1\atop j\neq k}}^{d} \big(\mu_j^2+2\gamma_e\big), 2\frac{|\lambda|}{n}\Big)\bigg\}\label{eq:recurrence-errpronpath}
\end{align}
where the second inequality is due to the concavity of  $f(u,c)$ in $u$ for $u,c\geq 0$ (Lemma~\ref{l:hoeffd2}).

 Now, we upper bound the first argument to $f$, $\sum_{k=1}^d(1-\mu_k^2)\prod_{{j=1\atop j\neq k}}^{d} (\mu_j^2+3\gamma_e)$, for any sequence of $\mu_j$ and $\gamma_j\leq (1-\mu_j^2)/6$. Let $x_j=\mu_j^2+2\gamma_j$. Since according to~\eqref{eq:bound-gamma-e}, $2\gamma_j\leq (1-\mu_j^2)/3$, we have that $x_j<1$ and $\frac{3}{2}(1-x_j) = \frac{3}{2}(1-\mu_j^2-2\gamma_j)\geq 1-\mu_j^2$.
Therefore,
 \begin{align}
 \sum_{i=1}^d{(1-\mu_i^2)}\prod_{{j=1\atop j\neq i}}^{d} (\mu_j^2+2\gamma_e)&<\sum_{i=1}^d \frac{3}{2}{(1-x_i)}\prod_{{j=1\atop j\neq i}}^{d} x_j
 \leq 
 \frac{3}{2} d x^{d-1}{(1-x)}\,.\label{eq:maximization-d-errpronpath}
 \end{align}
Symmetry of the function $\sum_{i=1}^d(1-x_i)\prod_{j=1, j\neq i}^{d} x_j$ in $x_j$'s implies that it is maximized when $x_i=x$ for all $i$. Setting the partial derivatives of this term with respect to $x_i$ equal to zero for any $i$ shows this. This gives the second inequality in the above display. 

Now, we find an upper bound for $ \tfrac{3}{2} dx^{d-1}{(1-x)}$ for any $d\geq 2$ and $x<1$. 
The function $x^{d-1}(1-x)$ takes its maximum value at $x^*=1-1/d$. So,
\[\frac{3}{2} dx^{d-1}{(1-x)}\leq \frac{3}{2} \big(1-\frac 1d\big)^{d-1}\leq \frac{3}{4},\]
where the last inequality holds for all $d\geq 2$.
 Since $f(u,c)$ is nondecreasing in $u$ (Lemma~\ref{l:hoeffd2}), plugging this into~\eqref{eq:recurrence-errpronpath} and~\eqref{eq:maximization-d-errpronpath} gives
\begin{align*}\label{eq:maximization-x-errpronpath}
\Ex\bigg[\exp\Big(\lambda\sum_{k=1}^d M_i\Big) \,\Big|\, \EdgErr{(\A_{d-1})}  \bigg] \P\Big[\EdgErr{(\A_{d-1})}\Big] &\leq  \exp\Big\{nd\, f\Big(\frac{3}{4 d}, 2\frac{|\lambda|}{n}\Big)\Big\}\,.
\end{align*}
With $n>108 e^{2\beta} \log(4p)$ (as assumed in the statement of lemma), $\P\Big[\big(\EdgErr{(\A_{d-1})}\big)^{c}\Big]\leq 1/2$ (Lemma~\ref{l:EdgErrprb}) which gives 
\[\Ex\bigg[\exp\big(\lambda\sum_{i=1}^d M_i\big)\,\big|\, \EdgErr{(\A_{d-1})}\bigg]\leq 2\exp\Big\{nd\, f\Big(\frac{3}{4 d}, 2\frac{|\lambda|}{n}\Big)\Big\}.\]
This yields
\begin{align*}
\P\Big[\,\sum_{i=1}^d M_i\geq \gamma\,\big|\, \EdgErr{(\A_{d-1})} \Big] & \leq 2\min_{\lambda>0} \exp\Big\{nd\, f\Big(\frac{3}{4 d}, 2\frac{\lambda}{n}\Big)- \lambda\gamma\Big\} \\
& \overset{(a)}{\leq} 2 \left\{ \big(1+\frac{2\gamma}{3}\big)^{-\big(1+\frac{2\gamma}{3}\big)\frac{3}{3+4d}}  \big(1-\frac{\gamma}{2d}\big)^{-\big(1-\frac{\gamma}{2d}\big) \frac{4d}{4d+3}} \right\}^{nd}\\
&\overset{(b)}{\leq} 2\exp(-n\gamma^2/116)\,,
\end{align*}
where (a) uses $\lambda=\lambda^*$ with
\[\lambda^*= \frac{n/2}{1+3/(4d)}\log\frac{1+2\gamma/3}{1-\gamma/(2d)}\,,\]
and (b) uses  the following bound valid for any $d\geq 2$:
\begin{align*}
\frac{3d}{3+4d}\Big(1+\frac{2\gamma}3\Big)&\log\Big(1+\frac{2\gamma}3\Big) + \frac{4d^2}{3+4d}\Big(1-\frac{\gamma}{2d}\Big)\log\Big(1-\frac{\gamma}{2d}\Big) \\
&\overset{(i)}{\geq}\frac{3d}{3+4d}\Big(1+\frac{2\gamma}3\Big)\Big(\frac{2\gamma}3-\frac{2\gamma^2}9\Big) - \frac{4d^2}{3+4d}\Big(1-\frac{\gamma}{2d}\Big)\log\Big(1+\frac{\gamma/2d}{1-\gamma/2d}\Big)  \\
&\overset{(ii)}{\geq} \frac{d}{3+4d}\big[\,3\Big(1+\frac{2\gamma}3\Big)\Big(\frac{2\gamma}3-\frac{2\gamma^2}9\Big) -2\gamma \,\big] \overset{(iii)}{\geq}\gamma^2/25\,.
\end{align*}
Here we used $x\geq \log(1+x)\geq x-x^2/2$ for $x>0$ in (i) and (ii), and $\gamma<1$ gives $(iii)$.

Similarly, $\P\Big[\,\sum_{i=1}^d M_i\leq -\gamma\,\big|\, \EdgErr{(\A_{d-1})}\Big]\leq 2\exp(-n\gamma^2/25)$.
Plugging these into~\eqref{eq:totalP-errorpath}, we get that if $n>\max\{25/\gamma^2 \log(4p^2/\delta), 108 e^{2\beta}\log(2p^3/\delta)\}, $ then~\eqref{eq:path-error-pairerror} holds. 
\end{proof}

\section{Two trees lemma} \label{s:AppendixTwoTrees}
\begin{lemma}
	Let $\T_1$ and $\T_2$ be two spanning trees on set of nodes $\VV$. Let $w,\wt$ be a pair of nodes  such that $\path_{\T_1}(w,\wt)\neq \path_{\T_2}(w,\wt)$. Then there exists a pair of edges $f\triangleq (u,\ut)\in \path_{\T_1}{(w,\wt)}$ and $g\triangleq (v,\vt)\in \path_{\T_2}{(w,\wt)}$ such that 
	\begin{enumerate}
		\item[(i)] 	$f\notin \path_{\T_2}{(w,\wt)}$ and $g\notin \path_{\T_1}{(w,\wt)}$
		\item[(ii)]  $f\in \path_{\T_1}{(v,\vt)}$ and $g\in \path_{\T_2}{(u,\ut)}$
	\end{enumerate}
\end{lemma}
Since $f\in\path_{\T_1}(w,\wt) \cap \path_{\T_1}{(v,\vt)}$, $w$ and $\wt$ (and respectively $v$ and $\vt$) are in different subtrees of $\T_1$ after removing edge $f$, one can label the end points of the edges $f=(u,\ut)$ and $g=(v,\vt)$ such that $u,v\in \subT{\T_1}{f}{w}$ and $\ut,\vt\in \subT{\T_1}{f}{\wt}$ (See Figure \ref{f:proof}).
		
\begin{proof}
We assume that $\T_1$ and $\T_2$ are two spanning trees with no common edges over a set of nodes $\VV$.  Otherwise, if $\T_1$ and $\T_2$ have any common edges, we can contract them and construct a new pair of spanning trees over set of nodes $\VV'$ in a way that preserves paths. Note that by assumption $\path_{\T_1}(w,\wt)\neq \path_{\T_2}(w,\wt)$, so $w\,,\wt \in\VV'$ are not merged in the contraction process. 

Let $u_1,\cdots,u_{K+1}$ be the vertices of $\path_{\T_1}(w,\wt)$ in order with $ u_1 = w$ and $u_{K+1} = \wt$. Similarly, let $v_1,\cdots,v_{L+1}$ be the vertices of $\path_{\T_2}(w,\wt)$ with $v_1 = w$ and $v_{L+1} = \wt$. Now, for $1 \leq k \leq K,$ let $$h_{\T_1}(k)= \min\{t: (u_k, u_{k+1}) \in \path_{\T_1}(v_t, v_{t+1})\}\,. $$

We first show that $h_{\T_1}(k)$ is well-defined (i.e., for all $1\leq k\leq K$, there exists a $1\leq t\leq L$ such that $(u_k, u_{k+1}) \in \path_{\T}(v_t, v_{t+1})$). For each $1\leq k \leq K,$ let $\SS_{k}=\subT{\T_1}{(u_k,u_{k+1})}{u_k}$ and $\widetilde{\SS}_{k}=\subT{\T_1}{(u_k,u_{k+1})}{u_{k+1}}$. Note that $w\in\SS_{k}$ and $\wt\in\widetilde{\SS}_k$ for all $k$. Also, $(u_k, u_{k+1}) \in \path_{\T_1}(v_t, v_{t+1})$ for some $0\leq t \leq L\,$  if and only if $v_t\in\SS_{k}$ and $v_{t+1}\in\widetilde{\SS}_{k+1}$. For any given $k$, if such $t$ does not exist, then either $v_{t'}\in\SS_{k}$ or  $v_{t'}\in\widetilde{\SS}_{k}$ for all $1\leq t' \leq L+1$, which is a contradiction as $v_1\in\SS_{k}$ and $v_{L+1}\in\widetilde{\SS}_k$.  Hence $h_{\T_1}(k)$ is well-defined for all $k$. Equivalently, we could define $h_{\T_1}(k)=\min\{t:v_{t+1}\in\widetilde{\SS}_k\}$. As $\widetilde{\SS}_{k}\subseteq \widetilde{\SS}_{k+1}$, the function $h_{\T_1}(k)$  is non-decreasing in $k\,.$

Similarly, the function 
$h_{\T_2}(l)= \min\{t: (v_l, v_{l+1}) \in \path_{\T_2}(u_t, u_{t+1})\}\, $
is well-defined and non-decreasing. 
To prove the result, we first show that there exists a pair of integers $l^*$ and $k^*$ such that $h_{\T_1}(k^*)=l^*$ and $h_{\T_2}(l^*)=k^*$. To do so, let function $h(l): \{1,\cdots,L\}\to \{1,\cdots,L\}$ be $h(l)=h_{\T_1}(h_{\T_2}(l))$. This is a nondecreasing function over its domain. We now show that $h(l) $ has a fixed point. Assume that $h(1)>1$  and $h(L)<L$, since otherwise the result is proven. Define $m=\min\{t:h(t)\leq t\}$ and note that $1<m<L$.  Thus, $h(m)\leq m$ while $h(m-1)>m-1$. Since $h(l)$ is non-decreasing, we have $m-1 < h(m-1) \leq h(m)\leq m$, which implies that $h(m)=m$. 

Let $l^*$ be a fixed point of $h(l)$. According to the definition of $h(l)$, the pair of integers $l^*$ and $k^*=h_{\T_2}(l^*)$ satisfies $h_{\T_1}(k^*)=l^*$. Hence, the edge pair $(u_{k^*},u_{k^*+1})\in\path_{\T_1}(v_{l^*},v_{l^*+1})$ and $(v_{l^*},v_{l^*+1})\in\path_{\T_2}(u_{k^*},u_{k^*+1})$ satisfies the properties given in the Lemma. 
\end{proof}
\section{Bounds for risk (proof of Corollaries 3.5 and 3.6)}
\label{s:risk-upper-bd}

We provide a proof for Corollaries \ref{t:RiskUppererBound} on the risk which is a direct application of Theorem \ref{t:CLLowerBound}.
By Theorem \ref{t:CLLowerBound}, for any $0\geq \eta\geq 1$, given $n$ samples, $ \LL^{(2)}(P,\Pi_{\Tc}(\Ph))\geq \eta$ with probability at most $p\exp(-Cne^{-2\beta}) + p/\eta \exp(-Cn\eta^2)$. Hence,

\begin{eqnarray*}
\Ex\left[\LL^{(2)}(P,\Pi_{\Tc}(\Ph)\right] & = &\int_{0}^1 \P\left[\LL^{(2)}(P,\Pi_{\Tc}(\Ph)\geq \eta\right]\, d\eta \\
&\stackrel{(a)}{\leq} & 
\int_{0}^1  \min\{1, p\exp(-Cne^{-2\beta}) + \frac{p}{\eta}\exp(-Cn\eta^2)\}\, d\eta\\
& \stackrel{(b)}{\leq} & \eta^* + \int_{\eta^*}^1   p\exp(-Cne^{-2\beta}) + p \exp(-Cn\eta^2)\, d\eta\\
& {\leq} & \eta^* +  p\exp(-Cne^{-2\beta}) + p\int_{\eta^*}^{\infty}   \exp(-Cn\eta^2)\, d\eta\\
&  \stackrel{(c)}{\leq} & \eta^* +  p\exp(-Cne^{-2\beta}) + p \sqrt{\frac{\pi}{C n}} \exp(-Cn\eta^{*^2}/2)\\
& \stackrel{(d)}{\leq} & 3 \eta^*\,,
\end{eqnarray*}
where (a) is a direct application of Theorem \ref{t:CLLowerBound}. Inequality (b) is true for any $0<\eta^*\leq 1$. (c) uses the inequality $1/\sqrt{2\pi} \int_{a}^{\infty} \exp(-x^2/2)\,dx \leq \exp(-a^2/2)$. (d) holds for $\eta^*= p\exp(-Cne^{-2\beta}) + C' \sqrt{\frac{ \log p}{{n}}}$ with a constant $C'.$

\section{Accurate $k$-wise marginals} 
\label{sec:MarginalK}
We first describe our result, expressing marginals in terms of correlations, and then delineate implications to learning Ising models in order to make subsequent predictions from partial observations.

Let $\SS\subset \VV$ be a subset of variables. We are interested in the marginal $P(x_\SS)$ for an Ising model $P(x)$ of the form~\eqref{e:Ising}. Note that the variables have mean zero by symmetry of the model, hence $P(x_i)=\frac12$ for each $i\in \VV$. When $|\SS|=2\,,$ the marginal over any pair of nodes $\SS=\{i,j\}$ can be expressed as $P(X_i=x_i,X_j=x_j)=\frac{1}{4}[1+x_i x_j \mu_{ij}]$ for $x_i,x_j\in\{-,+\}\,.$

We seek a simple closed form for the marginal distribution $P(x_\SS)$ as a function of pairwise correlations between pairs of nodes $i,j\in\SS\,.$ By the previous paragraph, we have already accomplished this when $|\SS|=2\,.$ The general case $|\SS|>2 $ is (as one might expect) rather more complicated and involves a certain maximization over matchings. 

Let ${\VV\choose 2}$ denote the collection of pairs of nodes in $\VV$. 
For $\pr\in{\VV\choose 2}$, if $\pr=\{i,j\}$, then we also write the correlation $\mu_{\pr}=\Ex_{P}{X_iX_j}\,.$

\begin{defn}
For subset of nodes $\CC\subseteq \VV$ of even cardinality $|\CC|=2t$, we define the set $\Mc_{\CC}$ to be the collection of all partitions of $\CC$ 
into $t$ disjoint pairs of nodes, i.e. the set of all perfect matchings
in the complete graph with node set $\CC$:
$$ \mathcal{M}_\CC=\big\{M=\{\pr_1,\cdots,\pr_t\}:\, \pr_l\in{\CC\choose 2}\text{ for each }l \in [t],\, \pr_l\cap \pr_k=\varnothing \text{ for } l \neq k\big\}\,.$$
\end{defn}
\begin{defn} Let $t\geq 1$ be a positive integer and $\CC\subseteq \VV$ be of size $|\CC|=2t\,.$ The set-valued function $G(\CC)$ is defined as the set of matchings $M$ in $\mathcal{M}_\CC$ that maximize the product of correlations for elements in $M$: 
\begin{equation} \label{e:defG}
G(\CC) = \argmax_{M\in\mathcal{M}_{\CC}} \prod_{m\in M}|\mu_{\pr}|\,.
\end{equation} 
We define $G^*(\CC)$ to be an arbitrary element in $G(\CC)$, and $G(\varnothing)=\varnothing$ and $G^*(\varnothing)=\varnothing\,.$ We also use the notation $G^*(\CC,P)$ to be optimal matching $G^*(\CC)$ under the distribution $P$ to explicitly determine the underlying distribution. 
\end{defn}
We emphasize that often the maximum in \eqref{e:defG} is not uniquely achieved and the set $G(\CC)$ can be large. In what follows we use the convention that the product $\prod_{m\in \mathcal{I}}\mu_m=1$ whenever $\mathcal{I}=\varnothing$. 
Our main result is the following:

\begin{theorem}\label{t:decomposition}
If $P(x)$ is the probability mass function for an Ising model~\eqref{e:Ising} on any tree $\T$, then the marginal $P(x_\SS)$ over the subset $\SS\subseteq\VV$ can be written as:
\begin{equation}\label{e:decomposition}
P(x_{\SS})= {2^{-|\SS|}} \sum_{t=0}^{\lfloor |\SS|/2\rfloor} \,\sum_{\substack{\CC\subset\SS\\ |\CC|=2t}} \,\,\prod_{c\in\CC}x_c\prod_{\pr\in G^*(\CC)} \mu_{\pr}\,.
\end{equation}
\end{theorem}

\begin{proof}
For subset of variables $\SS\subseteq\VV$, the marginal distribution $P_{\SS}$ is
\begin{align*}
P_\SS(x_{\SS}) & = \sum_{x_{\SS^c}} P(x) \stackrel{(a)}{=} \sum_{x_{\SS^c}} \prod_{(i,j)\in\EE}\frac{1+\mu_{ij} x_i x_j}{2}  \stackrel{(b)}{=} \frac{1}{2^p} \sum_{x_{\SS^c}} [1+ \sum_{\F\subseteq\EE,\F\neq\varnothing} \prod_{(i,j)\in\F}\mu_{ij} x_i x_j]\\
& \stackrel{(c)}{=} \frac{1}{2^{|\SS|}} [1+ \sum_{\F\subseteq\EE, \atop \F\in\Omega(\SS),  \F\neq \varnothing} \prod_{(i,j)\in\F}\mu_{ij} x_i x_j]
\end{align*}
where (a) uses the Equation~\eqref{eq:TreeFac} for $P\in\PP_\T$. In (b) we expand the product term over all the edges in the tree. In (c), $\Omega(\SS)\subseteq 2^{\EE}$ is defined such that for  $\F\subseteq\EE$, if $\F\in\Omega(\SS)$, then for all nodes $i\in\SS^c$, there are even number of edges $e\in\F$ that have $i$ as their end point. Equivalently, any $\F\in\Omega(\SS)$ is the collection of edges in the set of edge-disjoint paths between distinct pairs of nodes in $\SS$. 
\end{proof}

The utility of this expression is in part due to the fact that it
depends on the tree topology $\T$ only through the optimization implicit in $G(\CC)$, which depends only on end-to-end correlations between variables in $\CC\subseteq\SS\,.$ In particular, the expression is agnostic to the topology over variables not in $\SS$: even the \emph{number} of such variables is irrelevant. This is important because  there can be several (non-isomorphic) subtrees which give the same marginal over $\SS$. This means that it is in general impossible to detect the tree topology just by observing end-to-end correlations, but this expression is robust to this. 

\begin{lem} \label{l:robustMatching}
For any set $\CC\subseteq\VV$ with $|\CC|=2t\,,$ and any pair of tree structured Ising distributions $P$ and $Q$ as in ~\eqref{e:Ising} (with possibly different underlying tree), let $\mu_{ij}=\Ex_P X_i X_j$ and $\wtmu_{ij}=\Ex_Q X_i X_j\,.$
If $|\mu_{ij}-\wtmu_{ij}|\leq \epsilon$ for all $i,j\in\CC\,,$ then $$\Big|\mkern-10mu\prod_{m\in G^*(\CC,P)}\mkern-10mu\mu_{m}-\mkern-10mu\prod_{m'\in G^*(\CC,Q)}\mkern-10mu\wtmu_{m'}\Big|\leq 2t\epsilon\,.$$
\end{lem}

\begin{proof}
First, we will show that for all $M\in\Mc_{\CC}\,,$ $\prod_{m\in M}\mu_m$ takes the same sign (\textit{i.e., for $M,M'\in\Mc_{\CC}\,, \prod_{m\in M}\mu_m \prod_{m'\in M'}\mu_{m'}\geq 0$}). Similar statement holds for $\wtmu\,.$
To do so, note that for all $M,M'\in\Mc_{\CC}\,,$ one can make a sequence of matchings $M_0=M, M_1, \cdots, M_r=M' $ such that $M_r$ and $M_{r+1}$ differ only at two pairs of nodes (\textit{i.e.,} There exists $(i,j), (k,l)\in M_r$ such that $(i,k), (j,l)\in M_{r+1}$).  
The product of correlations on $M_r$ and $M_{r+1}$ have the same sign (\textit{i.e.,} $\prod_{m\in M_r}\mu_m \prod_{m\in M_{r+1}}\mu_m \geq 0$). The same argument applies for all $r$ and hence $\prod_{m\in M}\mu_m \prod_{m'\in M'}\mu_{m'}\geq 0\,.$

Next, we show that if $|\mu_{ij}-\wtmu_{ij}|\leq \epsilon$ for all $i,j\in\CC$, then for any $M=\{m_1, m_2, \cdots, m_t\}\in\Mc_{\CC}\,, |\prod_{m\in M}\mu_m - \prod_{m'\in M}\wtmu_{m'}|\leq t\epsilon\,.$  
\begin{align*}
\Big|\prod_{i=1}^t\mu_{m_i}-\prod_{i=1}^t\wtmu_{m_i}\Big| &= \Big|\sum_{i=1}^t \prod_{j=1}^{i-1}\mu_{m_j}\prod_{j'=i+1}^{t}\wtmu_{m_{j'}} (\mu_i-\wtmu_i)\Big|\\
&\leq \sum_{i=1}^t \Big|\prod_{j=1}^{i-1}\mu_{m_j}\prod_{j'=i+1}^{t}\wtmu_{m_{j'}} (\mu_i-\wtmu_i)\Big|\leq \sum_{i=1}^t \big|\mu_i-\wtmu_i\big|\leq t\epsilon\,.
\end{align*}
Let's define $$a =\mkern-18mu\prod_{m\in G^*(\CC,P)}\mkern-10mu\mu_{m}, 
\quad a'=\mkern-18mu\prod_{m\in G^*(\CC,Q)}\mkern-10mu\mu_{m}, 
\quad b =\mkern-18mu\prod_{m\in G^*(\CC,P)}\mkern-10mu\wtmu_{m}, 
\quad b' = \mkern-18mu\prod_{m\in G^*(\CC,Q)}\mkern-10mu\wtmu_{m}\,.$$  Hence, we showed that $a a'\geq 0\,, b b'\geq 0, |a-b|\leq t\epsilon $ and $ |a'-b'|\leq t\epsilon\,.$ We also know that $|a|\geq |a'|$ and $|b'|\geq |b|$ according to the definition of $G^*(\CC,P)$ and $G^*(\CC,Q)\,.$ To bound $|a-b'|\,,$ we study two different cases: Either $a b \geq 0$ in which case $|a-b'| = ||a|-|b'||=\leq \max\{|a-b|,|a'-b'|\}\leq t\epsilon\,$ Or $a b \leq 0$ in which case $|a-b'|=|a|+|b'|\leq |a-b|+|a'-b'|\leq 2 t \epsilon\,.$
\end{proof}

\begin{prop}\label{prop:ssTVwithcorr}
Let $\T=(\VV,\EE)$ be a tree and $P$ an Ising model on $\T$. 
Let $Q$ be an Ising model on tree $\T'=(\VV,\EE')\,. $ If $|\Ex_{P}{X_i X_j}-\Ex_{Q}{X_i X_j}|\leq \eta$ for all $i,j\in\VV\,$ (i.e., $\sstv^{(2)} (P,Q)\leq \eta/2\,$), then $\sstv^{(k)} (P,Q)\leq k 2^k \eta\,.$
\end{prop}
\begin{proof}

For $i,j\in\VV\,,$ let $\mu_{ij}\triangleq \Ex_{P}X_iX_j$ and $\wtmu_{ij}\triangleq \Ex_{P}X_iX_j\,.$ We also use the notation $G^*(\CC,P)$ to be optimal matching $G^*(\CC)$ under the distribution $P\,.$

To bound $\SS\subseteq\VV$ with $|\SS|=k\,,$ we use Lemma \ref{l:robustMatching} to get: \begin{align*}
d_\mathrm{TV} (P_\SS,Q_\SS) & = \sum_{x_\SS\in\{-,+\}^k} |P(x_\SS)-Q(x_\SS)|\\
& 
 = \frac{1}{2^k}\mkern-5mu\sum_{x_\SS\in\{-,+\}^k} \bigg| \sum_{t=0}^{\lfloor \frac{k}{2}\rfloor} \,\sum_{\substack{\CC\subset\SS \\ |\CC|=2t}} \,\,\prod_{c\in\CC}x_c\prod_{\pr\in G^*(\CC,P)} \mu_{\pr} - \sum_{t=0}^{\lfloor \frac{k}{2}\rfloor} \,\sum_{\substack{\CC\subset\SS\\ |\CC|=2t}} \,\,\prod_{c\in\CC}x_c\prod_{\pr\in G^*(\CC,Q)} \wtmu_{\pr} \bigg|
  \notag\\
& \leq  \sum_{t=0}^{\lfloor \frac{k}{2}\rfloor} \,\sum_{\substack{\CC\subset\SS\\ |\CC|=2t}}\frac{1}{2^k}\mkern-5mu\sum_{x_\SS\in\{-,+\}^k} \,\,\bigg| \prod_{c\in\CC}x_c \bigg| \,\cdot\,\bigg|\prod_{\pr\in G^*(\CC,P)} \mu_{\pr} - \prod_{\pr\in G^*(\CC,Q)}\wtmu_{\pr} \bigg|
\\
& =  \sum_{t=0}^{\lfloor \frac{k}{2}\rfloor} \,\sum_{\substack{\CC\subset\SS\\ |\CC|=2t}} \,\,\bigg|\prod_{\pr\in G^*(\CC,P)} \mu_{\pr} - \prod_{\pr\in G^*(\CC,Q)}\wtmu_{\pr} \bigg|\\
&\leq \sum_{t=0}^{\lfloor \frac{k}{2}\rfloor} \,\sum_{\substack{\CC\subset\SS\\ |\CC|=2t}} 2t\eta \leq \sum_{t=0}^{\lfloor \frac{k}{2}\rfloor} {k \choose 2t} 2t\eta \leq k2^k \eta
\,.\qed
\end{align*}
\end{proof}

%
%

Note that learning the tree $\Tc$ from $n$ samples using the variant of Chow-Liu algorithm has time complexity of $O((n+1)p^2)$. Finding low-order marginals and posteriors of any subset of variables on a tree-structured distribution takes $O(p)$ operations using belief-propagation algorithm. 





\section{Numerical simulations}
\label{sec:NumSimSUpp}
In this section we give numerical numerical simulations on the following setups: 1) comparison of ssTV loss for the forest approximation algorithm with the Chow-Liu algorithm; 2) approximation of higher-order marginals, beyond pairwise; 3) tree-structured Ising model with external field; 4) model misspecification.

\paragraph{Forest approximation algorithm}
\label{sec:forestNumSim}
We simulate the forest approximation algorithm introduced in Section~\ref{s:truncation} in the paper. In this simulation, the data is generated from a distribution factorized as Equation~\eqref{e:Ising} using a tree with $p=31$ nodes and maximum edge strength $\beta=1.5$ and various values of $\alpha$. 

The forest approximation algorithm implemented in our simulations is slightly different from the one introduced in Section~\ref{s:truncation}. The algorithm introduced in Section~\ref{s:truncation} chooses a threshold, $\tau(\eps)=4\eps/\sqrt{1-\tanh\beta}$ for $\eps=\sqrt{2/n \log(2p^2/\delta)}$ as function of the parameters of the model and removes the edges added by the Chow-Liu algorithm whose weights are smaller than the threshold $\tau(\eps)$. This process guarantees that with probability $1-\delta$ the reconstructed forest is a subgraph of the original tree.
 In our simulations, we use a slightly different approach in choosing the value of the threshold.  Instead of fixing (a conservative) value for the threshold, for each sample and corresponding run of the algorithm, 
the threshold  used is the best choice for that specific sample. 
  The threshold $\widetilde\tau^{(i)}$ for sample $i$ is the minimum number such that removing the 
 edges with weights at most $\widetilde\tau^{(i)}$  from the  the maximum weight tree results in
 a subgraph of the original tree. 
We claim that the $\LL^{(2)}$ loss calculated for this algorithm (which uses aid from an oracle) is strictly smaller than the loss according to $\widetilde{d}$ defined in~\eqref{eq:forest-loss}. To see that, note that  if $\tau(\eps)<\widetilde\tau^{(i)}$, then the output of the original forest approximation algorithm is not a subgraph of the original tree (in which case $\widetilde{d}=1$). If $\tau(\eps)\geq\widetilde\tau^{(i)}$, then the original forest approximation algorithm (which uses $\tau(\eps)$) might remove more correct edges from the Chow-Liu tree than necessary. We verified 
that in each simulated instance $\widetilde\tau^{(i)}$ yields better performance in terms of $\LL^{(2)}$
(We ran the original forest approximation  algorithm introduced in Section~\ref{s:truncation} for various values of $\delta$ and this was the case for all $\delta$). 

\begin{figure}[h]
  \centering
\includegraphics[width=.5\linewidth]{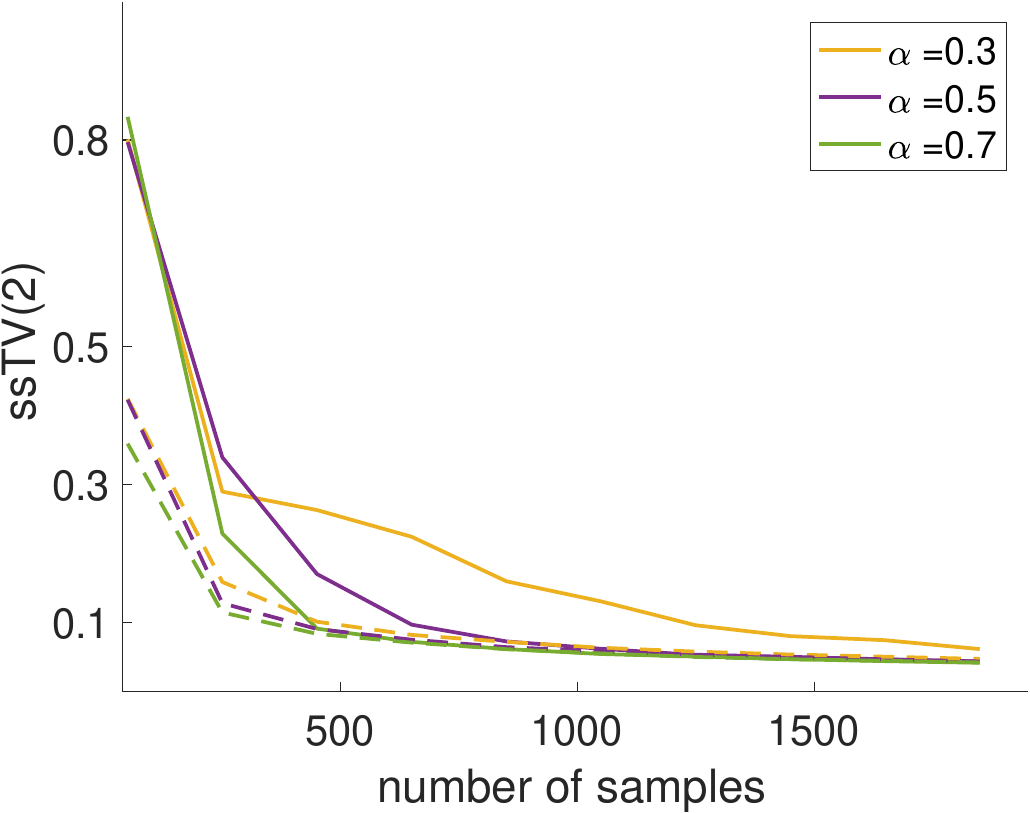}
\caption{The generative distribution is factorized as Equation~\eqref{e:Ising} on a tree with $p=31$ nodes, $\beta=1.5$ and various values of $\alpha$. The solid lines depict the $\LL^{(2)}$ loss of the forest approximation algoirhtm. The dashed lines are the $\LL^{(2)}$ loss of the Chow-Liu algorithm.}\label{fig:ForestApproxssTV}   
\end{figure}

In
 Figure~
\ref{fig:ForestApproxssTV}, 
 the $\LL^{(2)}$ loss for the output of this forest approximation algorithm is compared with the Chow-Liu algorithm for various values of $\alpha$. Again, note that the output of our variation of oracle-aided forest approximation algorithm is always a subgraph of the original tree and its $\LL^{(2)}$ loss is no larger than the original  forest approximation algorithm (\textit{i.e.}, it is more powerful as it uses the help from the oracle). Figure~\ref{fig:ForestApproxssTV} suggests that the $\LL^{(2)}$ loss entailed by the  Chow-Liu algorithm is smaller than both the original forest approximation and the variation used in the simulations. This confirms our main message of the paper that learning possibly incorrect tree by the Chow-Liu algorithm  performs better than the forest approximation algorithm in terms of $\LL^{(2)}$ loss.

\paragraph{Accurate $k$-wise marginals for general $k$}
\label{sec:generalKNumSim}
 Theorem~\ref{t:mainResultIntro} bounds the number of samples necessary to guarantee small $\LL^{(2)}$ loss using the Chow-Liu algorithm in learning tree-structured Ising models with no external field. Corollary~\ref{cor:margK} states the preliminary results to bound the $\LL^{(k)}$ loss for general $k$. Next,
we use numerical simulations to bound  the $\LL^{(k)}$ for general values of $k$ as a function of number of samples.
The generative distribution $P(x)$ is factorized according to~\eqref{e:Ising} on a random tree with $p=7$ nodes, $\alpha = 0.8$ and $\beta = 2$. Figure~\ref{fig:MargK} depicts the $\LL^{(k)}(P,Q)$ as a function of number of samples where $Q(x)$ is the output of the Chow-Liu algorithm. 

\begin{figure}[h]
  \centering
\includegraphics[width=.5\linewidth]{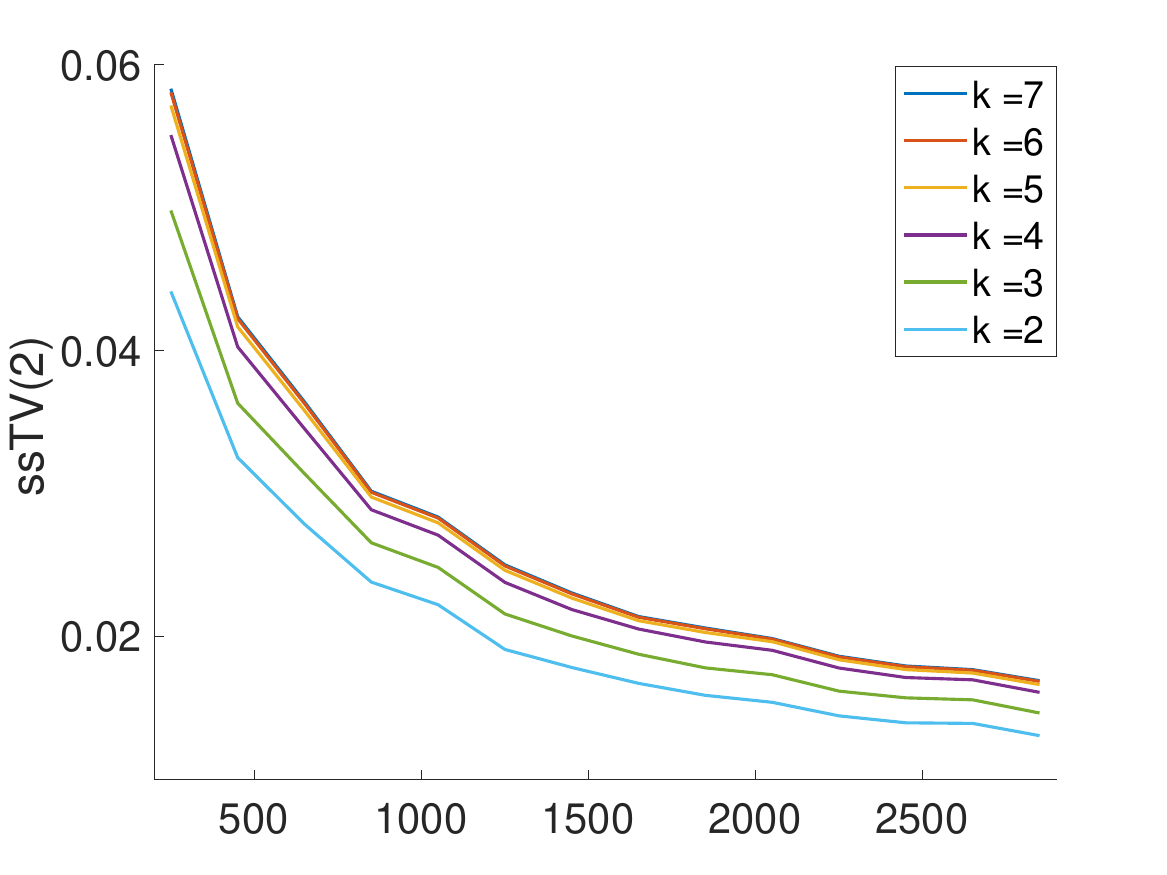}
\caption{The generative distribution $P(x)$ is factorized according to~\eqref{e:Ising} on a random tree on  $p=7$ nodes, with $\alpha = 0.8$ and $\beta = 2$. The distribution $Q(x)$ is the output of the Chow-Liu algorithm. This figure depicts the
  $\LL^{(k)}(P,Q)$ loss as a function of number of samples for various values of $k$.}\label{fig:MargK}   
\end{figure}

\paragraph{Ising model with external field}
\label{sec:ExternalNumSim}
We use the numerical simulations to study the performance of Chow-Liu algorithm on tree-structured Ising models in presence of external field.  
In this case, the singleton marginals over each variable are non-uniform. 
In our simulation, we generate a random tree and choose random values of parameters to make a generative tree-structured distribution $P(x)$.
The output of Chow-Liu algorithm is a tree structured Ising model $Q(x)$. Figure~\ref{fig:externalField} depicts the $\LL^{(2)}(P,Q)$ loss as function of number of samples used by the Chow-Liu algorithm. It conforms the main message of the paper which states that the output of the Chow-Liu algorithms has accurate pairwise marginals and can be used for subsequent predictions.  

\begin{figure}[H]
  \centering
\includegraphics[width=.5\linewidth]{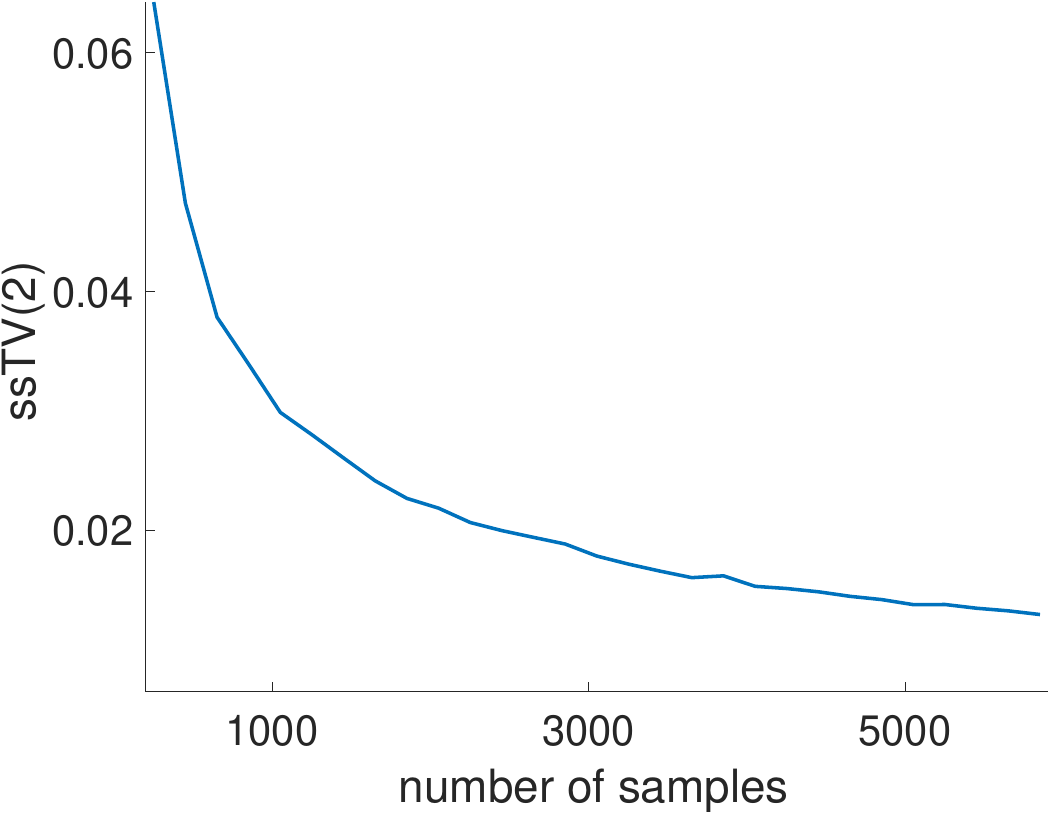}
\caption{The generative distribution is a tree structured Ising model with external field over a random tree and random values of parameters. The distribution $Q(x)$ is the output of Chow-Liu algorithm. This figure depicts $\LL^{(2)}(P,Q)$ loss as function of number of samples.}\label{fig:externalField}   
\end{figure}

\paragraph{Misspecified model}
\label{sec:misspecNumSim}
We study the effect of misspeficiation in the modelling on the performance of Chow-Liu algorithm using numerical simulations. The distribution $\Pt(x)$ is a tree structured Ising model factorized as Equation~\eqref{e:Ising} over a tree (uniformly randomly chosen on the set of trees on $p=6$ nodes) with $\alpha=0.1$ and $\beta=0.8$. The generative distribution $P$ is constructed by adding the random isotropic offset to the distribution $\Pt(x)$. 
Hence, the distribution $P(x)$ used to generate samples is not a tree structured Ising model  in this simulation.
 Let $\gamma\triangleq \LL^{(2)}(P,\Pt)$. The output of Chow-Liu algorithm on $n$ samples is a tree structured distribution $Q(x)$. Figure~\ref{fig:misspec} depicts the $\LL^{(2)}(P,Q)$ loss for various values of $\gamma$. As the number of samples $n$ grows, $\LL^{(2)}(P,Q)$ decays and $\lim_{n\to\infty} \LL^{(2)}(P,Q) =\gamma$.   The simulation results suggest that the Chow-Liu algorithm is robust to the misspecification in the model. 
 
\begin{figure}[h]
  \centering
\includegraphics[width=.5\linewidth]{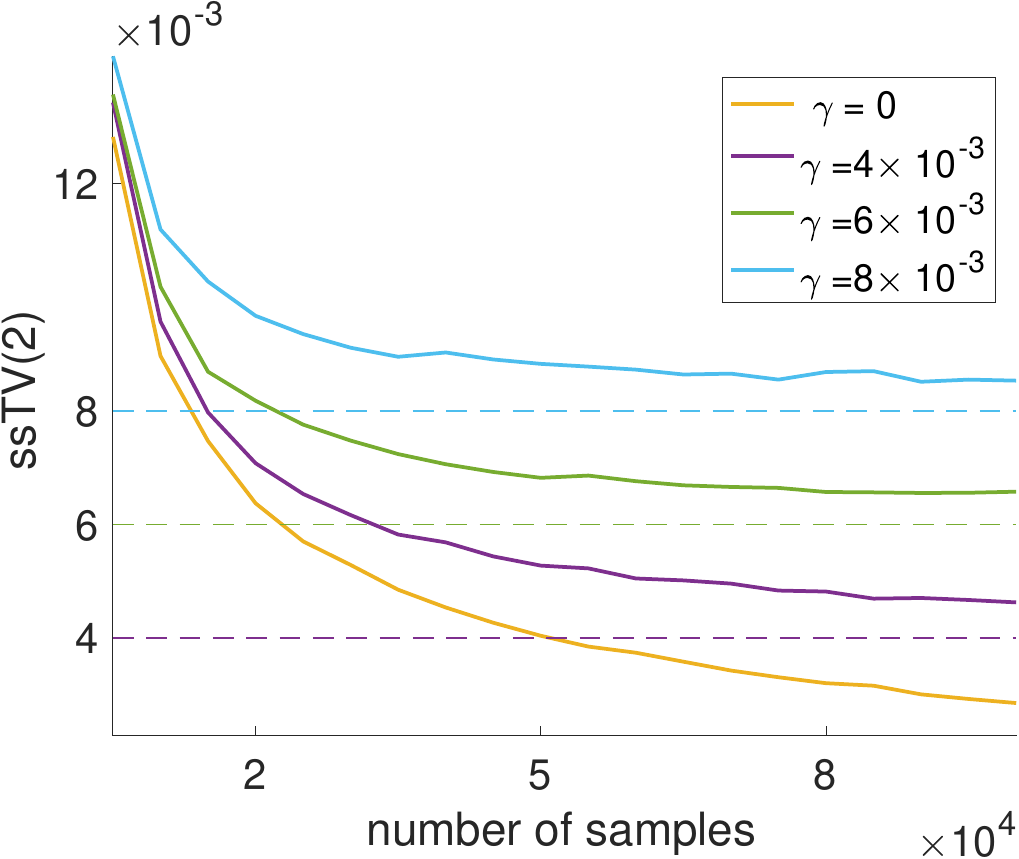}
\caption{The distribution $\Pt(x)$ is  factorized as Equation~\eqref{e:Ising} over a random tree with $p=6$ nodes with $\alpha=0.1$ and $\beta=0.8$. The generative distribution $P(x)$ is constructed by adding the random isotropic offset to the distribution $\Pt(x)$. The distribution $Q(x)$ is the output of Chow-Liu algorithm. The figure depicts $\LL^{(2)}(P,Q)$ as function of number of samples
for various values of $\gamma\triangleq \LL^{(2)}(P,\Pt)$. }\label{fig:misspec}   
\end{figure}

\section{Comparison of various tree-learning algorithms}\label{sec:comparisonLiterature}
Here, we review different approaches that could be taken toward learning a tree-structured distribution. 
We also review some known algorithms and their sample complexity.
The input of these algorithm is $n$ i.i.d. samples from a tree-structured distribution $P(x)\in\PP_{\T}(\alpha,\beta)$ with $\T=(\VV,\EE)$ being a tree. The gurantees mentioned below are with high probability.

\begin{itemize}
\item \textit{Guaranteed recovery of the structure}

\textbf{Algorithm:} Chow-Liu algorithm \cite{chow1968approximating}

\textbf{Output:}  $\Tc$

\textbf{Guarantee:} $\T=\Tc$

\textbf{Requirement:} $n > C\frac{e^{2\beta}}{\alpha^2}\log p$

In Theorems \ref{t:structUpper} and \ref{t:structLower} we provided the sample complexity of this problem (tight up to a constant factor).\\
\item \textit{Guaranteed recovery of the structure and accurate pairwise marginal}

\textbf{Algorithm:} Chow-Liu algorithm \cite{chow1968approximating}

\textbf{Output:}  $\Tc$ and $Q\in\PP_{\Tc}$

\textbf{Guarantee:} $\T=\Tc$ and $\LL^{(2)}(P,Q)\leq \eta\,.$

\textbf{Requirement:} $n > C \max\{\frac{e^{2\beta}}{\alpha^2},\frac{1}{\eta^2}\log \}\log p$\\
\item  \textit{Forest Approximation and accurate pairwise marginal}

\textbf{Algorithm:} Chow-Liu algorithm with a proper threshold over the weight of edges

\textbf{Output:}  $\Th=(\VV,\EE_{\Th})$ and $Q\in\PP_{\Th}$

\textbf{Guarantee:} $\EE_{\Th}\subseteq\EE_{\T}$ and $\LL^{(2)}(P,Q)\leq \eta\,.$

\textbf{Requirement:} $n> C \frac{e^{2\beta}}{\eta^{2}}\log p$

Proved in Proposition \ref{prop:TruncationUpper}
\\
\item \textit{Accurate pairwise marginals}

\textbf{Algorithm:} Chow-Liu Algorithm for the purpose of inference

\textbf{Output:}  $\Tc=(\VV,\EE)$ and $Q\in\PP_{\Tc}$

\textbf{Guarantee:}  $\LL^{(2)}(P,Q)\leq \eta\,.$

\textbf{Requirement:} $n> C \max\{e^{2\beta},\eta^{-2}\}\log p$

Proved in Theorem \ref{t:CLLowerBound}
\\

\item  \textit{Latent tree with accurate pairwise marginals}

\textbf{Algorithm:} Agarwala, Bafna, Farach, Paterson, and Thorup \cite{agarwala1998approximability}.

\textbf{Output:}  $S=(\VV\cup\widetilde{\VV},\EE_{S})$ and $Q\in\PP_{S}$ such that $\VV$ is the set of leaves of $S$.

\textbf{Guarantee:} $\LL^{(2)}(P,Q_{\VV})\leq \eta\,.$

\textbf{Requirement:} $n> C \frac{1}{\alpha^2 \eta^2}\log p $

Discussed in Section \ref{sec:RelatedWork}.
\\
\item  \textit{Latent tree with accurate joint distribution over the leaves}

\textbf{Algorithm:} Ambainis, Desper, Farach, and Kannan \cite{ambainis1997nearly}.

\textbf{Output:}  $S=(\VV\cup\widetilde{\VV},\EE_{S})$ and $Q\in\PP_{S}$ such that $\VV$ is the set of leaves of $S$.

\textbf{Guarantee:} $\LL^{(p)}(P,Q_{\VV})\leq \eta\,.$

\textbf{Requirement:} $n> C \max\{\frac 1{\alpha^2},p \frac{e^{2\beta}}{\eta^2} \}$

Discussed in Section \ref{sec:RelatedWork}.
\\

\end{itemize}

The last two introduced problems are examples of improper learning in which the class of models from which we learn is bigger than the class of models assumed over the original distribution. In this case, we assume that the original distribution is tree-structured over the nodes in $\VV$ whereas the learned model is represented with a latent tree such that its leaves are the nodes in $\VV$. Specific applications determine the requirements over the learned models and the guarantees which are necessary for the subsequent purposes. But in general proper learning is necessary when the identification of the qualitative structure behind the distributions is necessary. The extreme case of which is when one is only interested in the underlying structure  which corresponds to the first problem in above category.

Note that every tree-structured distribution over $p$ nodes in $\VV$ can be equivalently represented using a latent tree with $\VV$ as its leaves. To do so, one has to add some dummy variables and infinite strength edges to the original tree. But the converse is not true: A distribution represented on the leaves of a latent tree cannot be factorized as a tree-structured distribution in general.

\end{document}